\documentclass[11pt,dvipsnames]{amsart}

\usepackage[english]{babel}
\usepackage[utf8]{inputenc}
\usepackage{amssymb,amsthm,amsfonts,amsmath}
\usepackage{mathrsfs}
\usepackage{verbatim}
\usepackage{mathtools}
\usepackage[shortlabels]{enumitem}
\usepackage[margin=1in]{geometry}

\usepackage{tikz}
\usepackage[backref=page,colorlinks,breaklinks,linkcolor=BrickRed,citecolor=OliveGreen,urlcolor=blue,hypertexnames=true]{hyperref}
\newcommand{\Paff}{\mathcal{P}_{\text{aff}}(K)}

\newcommand{\Caff}{\mathcal{C}_{\text{aff}}(K)}

\newtheorem{theorem}{Theorem}[section]

\newtheorem{lemma}[theorem]{Lemma}
\newtheorem{proposition}[theorem]{Proposition}

\theoremstyle{definition}
\newtheorem{definition}[theorem]{Definition}
\newtheorem{remark}[theorem]{Remark}
\newtheorem{example}[theorem]{Example}

\DeclareMathOperator{\Aff}{Aff} % The space of affine functions
\newcommand{\R}{\mathbb{R}}
\newcommand{\C}{\mathbb{C}}

\def\cC{ {\mathcal C} }

\def\cM{{\mathcal M}}

\def\cP{{\mathcal P}}

\def\cR{{ \mathcal R }}

\begin{document}
	
	\title{Kadison duality for partially convex sets}

	\author[T. \v{S}trekelj]{Tea \v{S}trekelj}
	\address{Department of Mathematics, FAMNIT, University of Primorska, Koper \&  Institute of Mathematics, Physics and Mechanics, Ljubljana, Slovenia.}
	\email{tea.strekelj@famnit.upr.si}

	\thanks{The author was supported by the Slovenian Research and Innovation Agency grant J1-60011.}

	\subjclass[2020]{46A55, 52A01, 46A40, 46L08, 46E15}
	
	\keywords{convex set, affine function, Archimedean order unit space, state space, Kadison duality, partially convex set, partially affine polynomial, continuous partially affine function, free order unit module}

	\begin{abstract}
		This paper extends the Kadison duality between compact convex sets and function systems to the setting of partial convexity. A partially convex set is a set that is convex in a designated set of ``convex" variables when the others are held fixed. We introduce the notion of a regular partially convex set and identify its dual as a finitely generated free module over a commutative $C^*$-algebra endowed with a compatible Archimedean order unit structure. We call such spaces free order unit modules. We prove that for any compact regular partially convex set $K$, the space of continuous functions that are affine in the convex variables is the canonical example of such a module. Conversely, we show that (the coordinate realization of) the partially convex state space of a free order unit module is a compact regular partially convex set. Our main result establishes a duality between compact regular partially convex sets and free order unit modules. We also establish a Stone-Weierstrass-type theorem, demonstrating that partially affine polynomials are dense in the space of continuous partially affine functions on any compact regular partially convex set. Finally, we prove a Hahn-Banach-type separation theorem of compact partially convex sets from points outside them.\looseness=-1
	\end{abstract}

	\maketitle
	
	\newpage

\section{Introduction}

Convexity is a simple but powerful concept originating from
geometry. It gives theoretical background to many research areas in mathematics, such as  linear programming, computational geometry and probability.
A subset of a vector space is \textit{convex} if it contains the line segment joining any two of its points, or equivalently, if it is closed under convex combinations.  In modern analysis, the importance of convexity often lies in its dual relationship with the space of affine functions defined upon the set.

The cornerstone result in this direction is the Kadison duality theorem \cite{Al}. Specifically, this duality establishes a canonical bijection between compact convex sets and function systems, which are unital, closed, self-adjoint subspaces of continuous functions. For a compact convex set $K,$ the associated function system is the space of continuous affine functions on $K.$  Conversely, any abstract function system $\mathcal{R}$ finds a geometric realization in its state space $S(\mathcal{R})$, the set of all unital positive functionals on $\mathcal{R}$. 
This correspondence allows for a rich interplay: the properties of the affine functions on $K$ provide insight into the structure of $K$ itself, and in turn, the geometric properties of $K$ are reflected in the behavior of the corresponding affine functions.

Building on this classical foundation, there have been many generalizations of convexity developed for more complex geometric contexts \cite{MM96, Nai04}. 
A well-established noncommutative notion of convexity is matrix convexity \cite{Wit, EW, WW, HM12, HM14, HKM17, HL, DK}. In this setting, single convex sets are replaced by families of matrix sets across all dimensions, connected via direct sums and isometric conjugations.
Just as classical convexity is linked to function systems via Kadison duality, matrix convexity finds its natural noncommutative counterpart in operator systems via Webster-Winkler duality \cite{WW}. Hence, by studying matrix convexity one introduces several convex-geometric ideas and tools to understand operator spaces, systems and algebras \cite{Pa, DM05, Arv08, DK15}.

%For the purposes of this , the most significant of these generalizations is $\Gamma$-convexity, a noncommutative concept that provides the broader framework for the partially convex sets central to this paper. 
Recently, generalized notions of noncommutative convexity have been studied \cite{HHLM, DHM, JKMMP21,JKMMP22, KMS+}, where the underlying sets are convex only in a subset of variables. Formally, the geometry of these sets is encoded by a tuple $\Gamma$ of noncommutative polynomials. The core idea is that while classical convex sets are intersections of half-planes, these $\Gamma$-convex sets are defined by systems of polynomial inequalities involving the components of $\Gamma$. \looseness=-1

The aim of this paper is to establish an analog of Kadison duality for partially convex sets, a prominent instance of $\Gamma$-convex sets. 

\subsection{Partial convexity}
Let $V = \R^{n+m}$ be a finite-dimensional real vector space. An element of $V$ is denoted by $z = (x, y)$ with $x \in \R^n$ and $y \in \R^m$. For a subset $K \subseteq V$ let $\pi_2(K) = \{y \in \R^m \ | \ \exists x\in \R^n: (x,y) \in K\}$ be the projection of $K$ onto $\R^m.$ The projection $\pi_1(K)$ on $\R^n$ is defined similarly.

\begin{definition} A set $K \subseteq \R^{n+m}$ is \textbf{partially convex in $x$} if for any fixed $y_0 \in \pi_2(K),$ the slice $K_{y_0} = \{x \in \R^n \mid (x, y_0) \in K\}$ is a  convex set.
\end{definition}

\begin{example}
	All convex sets are partially convex, independent of the decomposition of the ambient space $\mathbb{R}^d$ (including the trivial decomposition $d = d + 0$). Similarly, a disjoint union of convex sets is partially convex in $x$ as long as the sets do not overlap vertically (i.e., they share no $y$-coordinates). Another example is shown by Figure \ref{fig1} below:~the set bounded by the lines $y = \pm 1$ and the hyperbola $2x^2=y^2+1.$
\end{example}

We will consider spaces of functions on a partially convex set $K$ endowed with the supremum norm $\|f\|_\infty = \sup_{z \in K} |f(z)|$. 

\newpage

\begin{definition}\label{def:morph} Let $K \subseteq \R^{n+m}$ be a partially convex set.
	\begin{enumerate}
		\item The space of \textbf{partially affine polynomials}, $\Paff$, consists of all polynomials $p(x, y)$ such that for any fixed $y_0$, the function $x \mapsto p(x, y_0)$ is affine. Such a polynomial can be written in the form
		\[ p(x, y) = \sum_{i=1}^{n} c_i(y) x_i + c_0(y) \]
		where the $c_i(y)$ are polynomials in the $y$ variables.
		
		\item The space of \textbf{continuous partially affine functions}, $\Caff$, consists of all continuous functions $f(x, y)$ on $K$ such that for any fixed $y_0$, the function $x \mapsto f(x, y_0)$ is affine. 
	\end{enumerate}
\end{definition}

\begin{definition}\label{def:part-iso}
Two partially convex sets $K, L \subseteq \mathbb{R}^{n+m}$ in $x \in \mathbb{R}^{n}$ are \textbf{isomorphic} if there exists a homeomorphism $\Theta: K \rightarrow L$ of the form
\[
\Theta(x, y) = \left( \theta_y(x), \phi(y) \right)
\]
such that
\begin{enumerate}[(i)]
	\item $\phi: \pi_2(K) \rightarrow \pi_2(L)$ is a homeomorphism;
	\item for every fixed $y \in \pi_2(K)$, the map $\theta_y: K_y \rightarrow L_{\phi(y)}$ is an affine homeomorphism.
\end{enumerate}
\end{definition}

\begin{center}
	\begin{figure}[h]
	\begin{tikzpicture}[scale=1.5] 
		% Axes
		\draw[->] (-1.5, 0) -- (1.5, 0) node[right] {$x$};
		\draw[->] (0, -1.5) -- (0, 1.5) node[above] {$y$};
		
		% Plot the curves
		\draw[scale=1, domain=-1:1, smooth, variable=\x, thick, blue!80!black] plot ({\x}, {1});
		\draw[scale=1, domain=-1:1, smooth, variable=\x, thick, blue!80!black] plot ({\x}, {-1});
		\draw[scale=1, domain=-1:1, smooth, variable=\x, thick, blue!80!black] plot ({sqrt((\x*\x+1)/2)},{\x});
		\draw[scale=1, domain=-1:1, smooth, variable=\x, thick, blue!80!black] plot (-{sqrt((\x*\x+1)/2)},{\x});
		
		% Shaded region
		\fill[blue!30, opacity=0.5] plot[domain=-1:1, variable=\x] ({\x}, {1})
		--	plot[domain=-1:1, variable=\x] ({\x}, {-1}) 
		-- plot[domain=-1:1, variable=\y] ({sqrt((\y*\y+1)/2)}, {\y}) 
		-- plot[domain=-1:1, variable=\y] ({-sqrt((\y*\y+1)/2)}, {\y}) --  cycle;
	\end{tikzpicture}\caption{A partially convex set, bounded by the lines $y = \pm 1$ and the hyperbola $2x^2=y^2+1.$}\label{fig1}
	\end{figure}
\end{center}

\subsection{Main results}
The main contribution of this paper is the extension of Kadison’s duality between convex sets and function systems to the setting of partial convexity. Precisely, we focus on \textit{regular} partially convex sets as introduced in Definition \ref{def:reg}, where regularity ensures the well-behaved nature of the set’s boundary.

In Theorem \ref{thm:ctsXY} we show that every continuous partially affine function $f$ on a regular partially convex set $K$ (in the $x$ variable) admits a unique representation
\[
f(x,y) = c_0(y) + \sum_{i=1}^n c_i(y)x_i,
\]
where the functions $c_0, c_1, \dots, c_n: Y \to \R$ are continuous (here $Y = \pi_2(K)).$ This implies that continuous partially affine functions on a compact regular partially convex set can be uniformly approximated by partially affine polynomials (Proposition \ref{prop:p-approx}).

Next, Definition \ref{def:module} provides an axiomatic characterization of the space $\Caff$ for a compact regular partially convex set $K$ as a \textit{free order unit module}. In essence, a free order unit module $A$ is both an Archimedean order unit space and a finitely generated free module of rank $n+1$ over the commutative $C^*$-algebra $\mathcal{C}(Y)$. Its structure is determined by local fibers $A_y,$ finite-dimensional spaces of affine functions for each $y \in Y,$ which form a bundle with a hemicontinuous positive cone. To ensure global consistency, we require that the algebraic isomorphism between the module and its space of coefficient functions is a homeomorphism. This module structure formally captures the fact that multiplying a partially affine function by a function of the parameter $y$ preserves partial affinity.

Having established the canonical function system associated with a partially convex set, we conversely introduce canonical partially convex sets arising from free order unit modules (Definition \ref{def:pstate}). The \textit{partially convex state space} $S_{par}(A)$ of a free order unit module $A$ is defined as the union of the state spaces of the individual fibers $A_y$ over the base space $Y$. When equipped with coordinates derived from a module basis of $A,$ this state space yields a compact regular partially convex set $K_A \subseteq \mathbb{R}^{n+m},$ which we call the \textit{partially convex coordinate state space}.

We show that $\Caff$ satisfies the axioms of a free order unit module (Proposition \ref{prop:axiomok}) and, conversely, that the partially convex coordinate state space of such a module is always a compact regular partially convex set (Proposition \ref{prop:Sreg}). These two results are consolidated into our main result, the following duality theorem.

\begin{theorem} \phantom{ } \label{thm:dual}
	\begin{enumerate}

			\item %Let $K \subseteq \mathbb{R}^{n+m}$ be a compact regular partially convex set in $x \in \mathbb{R}^n$. Then $C_{\mathrm{aff}}(K)$ is a free order unit module of type $(n, m)$, and $K$ is isomorphic to the partially convex state space $S_{\mathrm{par}}(C_{\mathrm{aff}}(K))$.
			Let $K\subseteq\mathbb{R}^{n+m}$ be a compact regular partially convex set in $x\in\mathbb{R}^{n}$. Then $\mathcal{C}_{\textnormal{aff}}(K)$ is a free order unit module and $K$ is isomorphic to the partially convex coordinate state space $K_{\mathcal{C}_{\textnormal{aff}}(K)}.$ 
			%(and canonically homeomorphic to $S_{par}(\mathcal{C}_{\textnormal{aff}}(K))$ via fiberwise affine evaluation maps).  
		\item Let $A$ be a free order unit module. Then its partially convex coordinate state space $K_A \subseteq \mathbb{R}^{n+m}$ is a compact regular partially convex set, and $A$ is isomorphic to $C_{\mathrm{aff}}(K_A)$ as a free order unit module.
	\end{enumerate}  
\end{theorem}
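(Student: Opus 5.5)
Part (1) splits into two claims. The first is that $\Caff$ is a free order unit module; this is exactly Proposition \ref{prop:axiomok}, so it can be quoted. The second is that $K$ is isomorphic to $K_{\Caff}$, and for this I would write down the isomorphism explicitly.

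By Theorem \ref{thm:ctsXY}, every $f\in\Caff$ can be written uniquely as $f=c_0+\sum_i c_i x_i$ with $c_i\in\mathcal{C}(Y)$. Hence $1,x_1,\dots,x_n$ form a module basis over $\mathcal{C}(Y)$, and I would use this basis to build the coordinate state space. Its fiber at $y$ is $A_y=\operatorname{span}\{1,x_1,\dots,x_n\}$, viewed as affine functions on the slice $K_y$. Points of $K_y$ give states of $A_y$ by evaluation. I would then define
\[
\Theta(x,y)=\bigl((\mathrm{ev}_{(x,y)}(x_i))_i,\,y\bigr)=(x,y),
\]
which is the identity in coordinates, so the real content lies elsewhere.

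What must be shown is that every state of $A_y$ is evaluation at some point of $K_y$. This is classical Kadison duality applied to the compact convex slice $K_y$. Positivity in $A_y$ has to coincide with positivity on $K_y$, and this should be built into the definition of the fiber cones; if it is not, Theorem \ref{thm:ctsXY} together with regularity is what will identify them. Once the fibers are identified, $\Theta$ is fiberwise affine with $\phi=\mathrm{id}$. The full identification $K\cong K_{\Caff}$ then follows from compactness and the fact that $\Theta$ preserves coordinates. I would also check independence of the chosen basis: a change of basis is an invertible matrix over $\mathcal{C}(Y)$ that fixes the unit, and so it induces an isomorphism in the sense of Definition \ref{def:part-iso}.

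For part (2), Proposition \ref{prop:Sreg} gives that $K_A$ is a compact regular partially convex set. Fix a module basis $e_0=1,e_1,\dots,e_n$ of $A$; the unit can be taken as the first basis element up to a change of basis. I would define $\Phi:A\to\Caff(K_A)$ by $\Phi(a)(\sigma)=\sigma(a_y)$ for $\sigma\in S(A_y)$. In coordinates, if $a=\sum c_i e_i$, then $\Phi(a)(x,y)=c_0(y)+\sum c_i(y)x_i$. This function is continuous and partially affine, and $\Phi$ is $\mathcal{C}(Y)$-linear and unital.

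Surjectivity comes from Theorem \ref{thm:ctsXY} applied to $K_A$: every partially affine function has this form with continuous coefficients. Injectivity and the order isomorphism are proved fiberwise. Each $A_y$ is a finite-dimensional Archimedean order unit space, so by Kadison duality it is order isomorphic to the affine functions on its state space, and $a\ge0$ if and only if $a_y\ge0$ for all $y$.

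The main obstacle is the step where fiberwise positivity is assembled into global positivity, and the module homeomorphism axiom is turned into isometry of $\Phi$. This requires $\|a\|=\sup_y\|a_y\|$, and that is where the hemicontinuity of the cone bundle and the homeomorphism axiom of Definition \ref{def:module} must be used. My first attempt would be to show that the order unit norm on $A$ equals the supremum of the fiber order unit norms, using Archimedeanity and the fact that $\Phi$ is a bijective positive unital map.
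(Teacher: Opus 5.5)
Your route is the paper's. In (1), the standard basis $1,x_1,\dots,x_n$ with the coordinate generators $y_1,\dots,y_m$ makes $K_{\mathcal{C}_{\mathrm{aff}}(K)}$ literally equal to $K$, once the states of each fiber are known to be point evaluations (Theorem \ref{thm:sep}). In (2), the coefficient map $\Phi$, surjectivity via Theorem \ref{thm:ctsXY} on the regular set $K_A$, and the order isomorphism via Axiom (5) plus Kadison duality on each $A_y$ are exactly the proof of Theorem \ref{th:fnsys}. There are, however, two points to fix.

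\textbf{The hedged step in (1).} This step is not built into the definitions, and Theorem \ref{thm:ctsXY} does not supply it. The fiber cone is the quotient cone $A_y^+=\{[a]_y : a\in A^+\}$. It is automatically contained in the cone of affine functions that are nonnegative on $K_y$. Without the reverse inclusion, nothing prevents $S(A_y)$ from containing states that are not point evaluations, in which case $K_{\mathcal{C}_{\mathrm{aff}}(K)}$ would be strictly larger than $K$.

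The reverse inclusion needs a positive lift. Given $h\ge 0$ affine on $K_{y_0}$, take its constant-coefficient lift $F$ and set $m(y)=\min_{x\in K_y}F(x,y)$. This $m$ is continuous by Berge's maximum theorem: lower hemicontinuity of $y\mapsto K_y$ comes from regularity, and upper hemicontinuity from compactness. Then $G=F+\max(0,-m)$ lies in $A^+$ and restricts to $h$ at $y_0$. This argument, together with $J_{y_0}=\ker\bigl(f\mapsto f(\cdot,y_0)\bigr)$, is carried out inside the proof of Proposition \ref{prop:axiomok}. Since you already quote that proposition, invoke this step from its proof explicitly rather than leaving it conditional.

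For basis independence you should also let the generating tuple of $\mathcal{C}(Y)$ vary. That is where the base homeomorphism $\phi$ of Definition \ref{def:part-iso} comes from.

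\textbf{The ``main obstacle'' in (2).} It is not actually there. That global positivity is detected fiberwise is Axiom (5) verbatim. A unital order isomorphism between AOU spaces is automatically isometric, because the order unit norm is defined from the cone and the unit. Neither hemicontinuity nor Axiom (7) is involved. The identity $\|a\|=\sup_y\|[a]_y\|_y$ is Proposition \ref{prop:ns}(1), which uses only Axiom (5).

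What you do need, and only partly address with ``$\mathcal{C}(Y)$-linear'', is clause (2) of Definition \ref{def:module_iso}. For $z\in\cM_A$ one has $z=z\cdot e_0$, whose coefficient vector is $(z,0,\dots,0)$, so $\Phi(z)(x,y)=z(y)$. By Proposition \ref{prop:axiomok}, these are exactly the elements of the multiplicative domain of $\mathcal{C}_{\mathrm{aff}}(K_A)$. Moreover $\sup_{K_A}|\Phi(z)|=\|z\|_{\mathcal{C}(Y)}$, because every slice of $K_A$ is nonempty. Hence $\Phi|_{\cM_A}$ is an isometric $*$-isomorphism onto that multiplicative domain.
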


The proof of Theorem \ref{thm:dual} is divided into two parts: item \textit{(1)} follows from Theorem \ref{thm:sep} and item \textit{(2)} from Theorem \ref{th:fnsys}.

\subsection*{Reader's guide} The remainder of the paper is organized as follows. 
Section \ref{sec:gamma} reviews the broader theory of $\Gamma$-convexity that motivates partial convexity; readers interested strictly in the scalar/commutative setting may proceed directly to Subsection \ref{sec:sep} (or Section \ref{sec:reg}) without loss of continuity. Subsection \ref{sec:sep} proves a Hahn-Banach-type separation theorem for compact partially convex sets. Section \ref{sec:reg} investigates the geometry of partially convex sets and formally introduces the notion of regularity, which serves as the topological foundation for our duality. Within this section, Theorem \ref{thm:ctsXY} establishes a unique representation for continuous partially affine functions. Furthermore, Subsection \ref{sec:approx} provides a Stone-Weierstrass-type result (Proposition \ref{prop:p-approx}), proving that partially affine polynomials are dense in the space of continuous partially affine functions on a compact regular partially convex set. Section \ref{sec:caff} introduces the free order unit module, the abstract algebraic structure corresponding to the space of continuous partially affine functions. We begin by considering the topological properties of these modules, subsequently proving that $\Caff$ serves as a canonical model for such a structure (Proposition \ref{prop:axiomok}). In Section \ref{sec:state}, we construct the partially convex state space associated with a free order unit module and verify regularity of its coordinate realization (Proposition \ref{prop:Sreg}). Finally, Section \ref{sec:rep} provides the proof of Theorem \ref{thm:dual}.\looseness=-1

\subsection*{Acknowledgements}
This work was partially developed during a visit to Saarland University via the T4EU chair programme; I am grateful for their hospitality and support. Special thanks are due to Michael Hartz for insightful conversations regarding the properties of the function theoretic dual, which were crucial for the progress of this paper. I am also greatly appreciative of the helpful comments on the earlier versions of the manuscript provided by Igor Klep and Scott McCullough. Finally, I thank the anonymous referee for their careful reading and valuable suggestions.

\section{$\Gamma$-convexity and partial convexity}\label{sec:gamma}
In this section, we briefly situate partial convexity within the broader context of noncommutative $\Gamma$-convexity, providing the geometric motivation for our definitions. The remainder of the paper from Subsection \ref{sec:sep} onward focuses entirely on the commutative (scalar) setting.
%In this section, we formally define $\Gamma$-convexity and demonstrate how the notion of partial convexity arises naturally as a specific instance within this general theory.

Let $\Gamma=(\gamma_1,\dots,\gamma_r)$ be a tuple  of symmetric noncommutative polynomials in the self-adjoint variables $x_1,\ldots,x_g$
with $\gamma_j=x_j$ for $1\le j\le g\le r$. Denote by $\mathbb{S}_n$ the space of all complex self-adjoint matrices of size $n \times n$ and for $g \in \mathbb{N},$ let $\mathbb{S}^g = \bigcup_n \mathbb{S}^g_n.$

\begin{definition}\label{def gamma}
	(a) A pair $(X,V)$, where $X\in \mathbb{S}^g_n$
	and $V:\C^m\to \C^n$ is an isometry, is a \textbf{$\Gamma$-pair} if it satisfies
	\[
	V^* \Gamma(X)V= \Gamma(V^*XV).
	\]
	(b) A graded set $K = (K_n)_n \subseteq \mathbb{S}^g$ with
	$K_n \subseteq  \mathbb{S}^g_n$ for each $n$  is a
	\textbf{free set} if it is closed with respect to direct sums,
	simultaneous  unitary conjugations, and restrictions to reducing 
	subspaces.\footnote{
		This means that if  $X\in K_n$ and $Y\in K_m$, then $X\oplus Y\in K_{n+m};$
		if $U \in \mathbb{M}_n$ is unitary, then $U^*XU=(U^*X_1 U,\dots,U^*X_g U)\in K_n$;
		and if $\mathcal{S} \subset \C^n$ is a $k$-dimensional  reducing subspace for $X$, 
		then  $X|_{\mathcal{S}}\in K_k.$}
		
	\noindent
	(c) A set $\textbf{$K$} \subseteq \mathbb{S}^g$ is a \textbf{$\Gamma$-convex set} if it is free and if 
	\[
	X\in \textbf{$K$} \, \text{ and } \, (X,V) \text{ $\Gamma$-pair } \, \implies V^*XV\in \textbf{$K$}.
	\]
	%	
	%	(d) The \textbf{$\Gat$-convex hull} of a free set $\textbf{$K$}$ is the smallest $\Gat$-convex set that contains $\textbf{$K$}.$ It is obtained as the intersection of all $\Gat$-convex sets containing $\textbf{$K$}$ and we
	%	denote it  by $\Gamma\text{-conv}(\textbf{$K$}).$ 
\end{definition}

We use the abbreviation $y^2$-convex ($y^2$-pair) to refer to a $\Gamma$-convex set ($\Gamma$-pair) with $\Gamma = (x,y,y^2)$ (here both $x$ and $y$ can be tuples).
 By \cite{JKMMP21}, a tuple $((X,Y),V)$ is a $y^2$-pair if and only if the range of $V$ reduces $Y.$  
 
 It turns out that the notion of $y^2$-convexity coincides with partial convexity: the following is \cite[Proposition 4.1]{JKMMP21}.

\begin{proposition}\label{prop:y^2}
	A free set $\mathcal{S} = (S_n)_n \subseteq \mathbb{S}^g \times \mathbb{S}^h$ is $y^2$-convex if and only if each $S_n$ is partially convex in $x.$\looseness=-1
\end{proposition}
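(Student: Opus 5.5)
The plan is to prove both directions, using the characterization quoted above from \cite{JKMMP21}: a tuple $((X,Y),V)$ is a $y^2$-pair if and only if the range of $V$ reduces $Y$, equivalently $V^*Y^2V=(V^*YV)^2$. Throughout, $\mathcal{S}$ is a free set, so it is closed under direct sums, unitary conjugation and restriction to reducing subspaces.

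\emph{($\Rightarrow$)} Assume $\mathcal{S}$ is $y^2$-convex and fix $n$. Let $(X^1,Y_0),(X^2,Y_0)\in S_n$ share the same $y$-coordinate, and let $t\in[0,1]$. The direct sum $(X^1\oplus X^2,\,Y_0\oplus Y_0)$ lies in $S_{2n}$. Consider the isometry
\[
V=\begin{pmatrix}\sqrt{t}\,I_n\\ \sqrt{1-t}\,I_n\end{pmatrix}:\C^n\to\C^{2n}.
\]
Its range is $\{(\sqrt t\,v,\sqrt{1-t}\,v)\}$, which is invariant under $Y_0\oplus Y_0$ because that operator acts diagonally by the same matrix. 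Since $Y_0\oplus Y_0$ is self-adjoint, the range is reducing. Hence the pair is a $y^2$-pair, and therefore
\[
V^*(X^1\oplus X^2,\,Y_0\oplus Y_0)V=\bigl(tX^1+(1-t)X^2,\;Y_0\bigr)\in S_n.
\]
This shows the slice of $S_n$ over $Y_0$ is convex, so $S_n$ is partially convex in $x$.

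\emph{($\Leftarrow$)} Assume each $S_n$ is partially convex in $x$. Let $(X,Y)\in S_N$ and let $V:\C^m\to\C^N$ be an isometry whose range $\mathcal{R}$ reduces $Y$. Write $Y=Y_1\oplus Y_2$ with respect to $\C^N=\mathcal{R}\oplus\mathcal{R}^\perp$. Then $V^*YV$ is unitarily equivalent to $Y_1$, and $V^*XV$ is unitarily equivalent to the compression $P_{\mathcal R}X|_{\mathcal R}$. So it suffices to show that the compression of $(X,Y)$ to $\mathcal{R}$ lies in $S_m$.

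To do this, I would use the pinching trick. Let $U=I_{\mathcal R}\oplus(-I_{\mathcal R^\perp})$, which is unitary and commutes with $Y$. Then $(U^*XU,\,Y)\in S_N$ by unitary invariance, and it has the same $y$-coordinate $Y$ as $(X,Y)$. Partial convexity of $S_N$ then gives
\[
\tfrac12\bigl(X+U^*XU\bigr)=P_{\mathcal R}XP_{\mathcal R}+P_{\mathcal R^\perp}XP_{\mathcal R^\perp}.
\]
Hence $\bigl(P_{\mathcal R}XP_{\mathcal R}+P_{\mathcal R^\perp}XP_{\mathcal R^\perp},\;Y\bigr)\in S_N$. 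This tuple is block diagonal, so $\mathcal{R}$ reduces it, and by freeness its restriction $\bigl(X|_{\mathcal R}\text{ compressed},\,Y_1\bigr)$ lies in $S_m$. Conjugating by the unitary $V:\C^m\to\mathcal R$ gives $V^*(X,Y)V\in S_m$.

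The main point needing care is the $y^2$-pair characterization, which I take from \cite{JKMMP21} as stated. The remaining detail is that the restriction axiom applies to the pinched tuple: $\mathcal{R}$ must be reducing for every coordinate, which holds because $Y$ commutes with $P_{\mathcal R}$ and the pinched $X$ does by construction.
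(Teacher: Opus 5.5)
Your proof is correct and is essentially the argument the paper relies on. The paper cites \cite[Proposition 4.1]{JKMMP21} for this statement rather than proving it, but its proof of the subsequent proposition on $y^2$-affine polynomials, which it says follows the same lines, uses exactly your two moves: the isometry $V=(\sqrt{t}\,I_n\ \ \sqrt{1-t}\,I_n)^*$ applied to $(X^1\oplus X^2,\,Y_0\oplus Y_0)$ for ($\Rightarrow$), and, for ($\Leftarrow$), the pinching $\tfrac12\bigl(U^*(X,Y)U+(X,Y)\bigr)$ with $U=I\oplus(-I)$ followed by restriction to the reducing subspace.
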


In the setting of matrix convexity, the natural morphisms are \textit{matrix affine maps}, which preserve direct sums and unitary conjugation. For $y^2$-convex sets, this notion is adapted as follows.

\begin{definition}\label{def:y^2map}
	Let $K = (K_n)_n$ be a $y^2$-convex set and $W$ a vector space. A family of maps $\Phi = (\Phi_n)_n,$ where $\Phi_n: K_n \to M_n(W),$ is a \textbf{$y^2$-affine map} if for any $y^2$-pair $((X,Y),V)$ with $(X,Y) \in K_k$ and $V \in \mathbb{M}_{k,r},$ 
	$$
	\Phi_r(V^*(X,Y)V) = V^* \Phi_k(X,Y) V.
	$$
\end{definition}

Any noncommutative polynomial $p$ can be evaluated at matrices of all sizes. We use the same notation $p$ for all evaluation maps $p_n: K_n \to \mathbb{M}_n,$ sending $(X,Y) \mapsto p(X,Y).$ So $p$ is considered a $y^2$-affine map if $p(V^*(X,Y)V) = V^*p(X,Y)V$ for all $y^2$-pairs $((X,Y),V).$

The following proposition motivates the definition of partially affine polynomials as morphisms of partially convex sets. The proof goes along the lines of the proof of Proposition \ref{prop:y^2}.

\begin{proposition}
	Let $K = (K_n)_n \subseteq \mathbb{S}^g$ be a $y^2$-convex set. A noncommutative polynomial $p$ is a $y^2$-affine map on $K$ if and only if it is affine in $x$ for any fixed $y.$
\end{proposition}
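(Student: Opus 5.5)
We prove both directions by writing $p$ in its normal form with respect to the $x$-degree and comparing the two sides on suitable $y^2$-pairs. The key tool is the characterization from \cite{JKMMP21}: $((X,Y),V)$ is a $y^2$-pair exactly when the range of $V$ reduces $Y$. In that case $V^*Y^jV=(V^*YV)^j$ for every $j$, and more generally $V^*q(Y)V=q(V^*YV)$ and $VV^*q(Y)=q(Y)VV^*$ for every noncommutative polynomial $q$ in $y$ alone.

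\emph{($\Leftarrow$)} Suppose $p$ is affine in $x$ for fixed $y$. Interpreting this in the free setting, $p$ is a sum of words containing at most one $x$-letter:
\[
p=q_0(y)+\sum_{i}\sum_k a_{ik}(y)\,x_i\,b_{ik}(y).
\]
Fix a $y^2$-pair $((X,Y),V)$ and put $P=VV^*$; then $P$ commutes with every polynomial in $Y$. First, $V^*q_0(Y)V=q_0(V^*YV)$. Next, for each mixed term,
\[
V^*a(Y)X_ib(Y)V=V^*a(Y)VV^*X_iVV^*b(Y)V=a(V^*YV)\,(V^*X_iV)\,b(V^*YV).
\]
Here we insert $P$ next to $X_i$ on both sides, using $a(Y)V=a(Y)PV=Pa(Y)V$ and similarly for $b$. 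Summing the terms gives $V^*p(X,Y)V=p(V^*(X,Y)V)$.

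\emph{($\Rightarrow$)} Now suppose $p$ is $y^2$-affine on $K$. Decompose $p$ by $x$-degree and let $p_{\ge2}$ collect the words with at least two $x$-letters; we must show $p_{\ge2}$ vanishes, at least as a function on $K$, which is the meaningful sense of ``affine in $x$''. I would mirror the proof of Proposition~\ref{prop:y^2}. Take $(X,Y)\in K_k$ and use direct sums and unitary conjugations to produce points of the form $(X\oplus X',\,Y\oplus Y)$. The isometry $V=\tfrac{1}{\sqrt2}\binom{I}{I}$ (or a general $\binom{\sqrt t\,I}{\sqrt{1-t}\,I}$) has range reducing $Y\oplus Y$, so it gives a $y^2$-pair. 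The pair $((X\oplus X',Y\oplus Y),V)$ compresses to
\[
(tX+(1-t)X',\,Y).
\]
The identity $V^*p(X\oplus X',Y\oplus Y)V=p(V^*(\cdot)V)$ then says
\[
t\,p(X,Y)+(1-t)\,p(X',Y)=p\bigl(tX+(1-t)X',\,Y\bigr)
\]
for all $t\in[0,1]$ and all $X,X'$ in the slice $K_Y$. So $X\mapsto p(X,Y)$ is affine on each slice, which is convex by Proposition~\ref{prop:y^2}. Using more general $V$ with blocks $\binom{C}{S}$, where $C,S$ commute with $Y$ and satisfy $C^*C+S^*S=I$, gives the matrix-convex-combination version.

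The main obstacle is converting ``affine on each slice'' into the algebraic statement that $p$ has $x$-degree at most one. This requires the slices to have nonempty interior, or at least enough points, at sufficiently large matrix levels that a polynomial identity on $K$ forces an identity in the free algebra. My plan has two steps. First, differentiate twice along segments inside a slice: affinity along every segment means the second directional derivative $D_x^2p(X,Y)[H,H]$ vanishes for $H$ in the span of the slice's directions. Second, invoke the fact that noncommutative polynomials vanishing on a set with nonempty interior at all large levels vanish identically. If $K$ is thin, I would expect the statement to be read as concerning the function $p|_K$, which is exactly what the segment argument above proves.
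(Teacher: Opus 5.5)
\textbf{There is a genuine gap in the $(\Leftarrow)$ direction.}

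Your $(\Rightarrow)$ direction is the paper's argument: compress $(X\oplus X',Y\oplus Y)$ by $V=\binom{\sqrt t\,I}{\sqrt{1-t}\,I}$. It shows that $X\mapsto p(X,Y)$ is affine on each slice of each $K_n$.

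The problem is that your two directions use different meanings of ``affine in $x$'', so under either single meaning one implication is unproved.
\begin{itemize}
\item In $(\Leftarrow)$ you assume $p$ has $x$-degree at most one in the free algebra. Your computation there is correct, but it never uses $K$; it shows $y^2$-affineness for every $y^2$-pair.
\item In $(\Rightarrow)$ you only obtain the slice-wise statement. The upgrade you call the main obstacle is not merely hard but false in general.
\end{itemize}
For a counterexample, take one $x$ and one $y$ variable and $K_n=\{(Y,Y): Y\in\mathbb{S}_n\}$. This set is $y^2$-convex with singleton slices. The polynomial $p=x^2$ is $y^2$-affine on it, because $V^*Y^2V=(V^*YV)^2$ whenever $\operatorname{ran}V$ reduces $Y$. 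Yet $p$ has $x$-degree two, and even $p_{\ge2}$ does not vanish on $K$. So the statement has to be read slice-wise, as the paper does. Under that reading your $(\Leftarrow)$ does not apply, since slice-wise affineness gives no decomposition $q_0(y)+\sum a(y)x_ib(y)$.

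The missing idea is a pinching argument that uses the geometry of $K$ rather than the algebra of $p$.
\begin{enumerate}
\item Take $(X,Y)\in K_k$ and an isometry $V$ such that $P=VV^*$ commutes with $Y$. Let $U=2P-I$, a self-adjoint unitary commuting with $Y$.
\item Then $U(X,Y)U=(UXU,Y)\in K_k$ lies in the same slice as $(X,Y)$.
\item Partial convexity of $K_k$ (Proposition~\ref{prop:y^2}) puts $(\tilde X,Y)$ in $K_k$, where $\tilde X=\tfrac12(X+UXU)=PXP+(I-P)X(I-P)$.
\item Since $\operatorname{ran}V$ reduces $(\tilde X,Y)$, you get $V^*p(\tilde X,Y)V=p(V^*XV,V^*YV)$.
\item Slice-wise affineness gives $p(\tilde X,Y)=\tfrac12p(X,Y)+\tfrac12Up(X,Y)U$.
\item Because $UV=V$, compressing the right-hand side by $V$ gives $V^*p(X,Y)V$.
\end{enumerate}
This is the paper's proof of $(\Leftarrow)$. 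With it, your plan of differentiating and using density of an interior is unnecessary.
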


\begin{proof}
	To prove $(\Rightarrow)$ let $(X_1,Y), (X_2,Y) \in K_n.$ We want to prove that for any $0\leq t \leq 1,$
	\begin{equation*}
		p(tX_1 + (1-t)X_2,Y) = tp(X_1,Y) + (1-t)p(X_2,Y).
	\end{equation*}
	For $0\leq t \leq 1,$ let $V = (\sqrt{t} I_n \  \sqrt{1-t}I_n)^*.$ Note that if $(Z,W) = (X_1,Y) \oplus (X_2,Y) = (X_1 \oplus X_2, Y \oplus Y),$ then $((Z,W),V)$ is a $y^2$-pair and 
	$$
	V^* (Z,W) V = (tX_1 + (1-t)X_2,Y).
	$$
	Then
	\begin{align*}
		p(tX_1 + (1-t)X_2,Y) & = p(V^* (Z,W) V) = V^* p(Z,W) V \\
		& = V^* (p(X_1, Y) \oplus p(X_2,Y)) V \\
		& = tp(X_1,Y) + (1-t)p(X_2,Y).
	\end{align*}
	
	To see $(\Leftarrow),$ let $(X,Y) \in K_{n+m}$ and $V \in \mathbb{M}_{n+m,n}$ be an isometry such that $((X,Y),V)$ is a $y^2$-pair. Hence, with respect to the range of $V,$
	$$
	(X,Y) =  \left(
	\begin{pmatrix}
		X_{11} & X_{12} \\
		X_{21} & X_{22}
	\end{pmatrix},
	\begin{pmatrix}
		Y_1 & 0 \\
		0 & Y_2
	\end{pmatrix}
	\right).
	$$
	Let $U = I_n \oplus -I_m.$ Then $U^* (X,Y)U \in K_{n+m}$ and
	$$
	(\tilde{X},\tilde{Y}) = \frac12 (U^* (X,Y)U + (X,Y)) =  (X_{11} \oplus X_{22}, Y_1 \oplus Y_2) \in K_{n+m}.
	$$
	Now $p(\tilde{X},\tilde{Y}) = p(X_{11}, Y_1)\oplus p(X_{22},Y_2)$ and so
	\begin{align*}
		p(V^*(X,Y)V) =  p(X_{11}, Y_1) =  V^* p(\tilde{X},\tilde{Y})V = 
		\frac12 V^*p(U^*(X,Y)U)V + \frac12 V^*p(X,Y)V = V^* p(X,Y) V,
	\end{align*}
	since $V^*p(U^*(X,Y)U)V = V^*U^*p(X,Y)UV = V^* p(X,Y) V$ by definition of $U.$
\end{proof}

Motivated by the connection between $y^2$-convexity and partial convexity, we henceforth focus on the commutative (level $1$) case.
% We move on to establish the two parties in the duality theory for these sets: the category of compact regular partially convex sets and their functional counterparts, the free order unit modules.

\subsection{A separation result for partially convex sets}\label{sec:sep}
Central to classical convex analysis is the separation of a point from a closed convex set (the Hahn-Banach separation theorem). We demonstrate that this principle extends to compact partially convex sets, where separation is achieved by partially affine polynomials.
%For matrix convex sets, such a separation is performed via a linear pencil, i.e., a matrix valued affine polynomial. An analogous result holds for $\Gamma$-convex sets

\begin{theorem}\label{prop:sep}
	Let $K \subseteq \R^{n+m}$ be a compact partially convex set in x and $z \notin K.$ Then there exists a $p \in \Paff$ such that $p(w) \geq 0$ for all $w \in K$ and $p(z)<0.$
\end{theorem}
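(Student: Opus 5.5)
Write $z=(x_0,y_0)$ and split into two cases according to whether $y_0$ lies in the compact set $Y=\pi_2(K)$.

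\emph{Case 1: $y_0\notin Y$.} Here a polynomial in $y$ alone does the job, and every such polynomial lies in $\Paff$ (take all $c_i=0$ for $i\ge 1$). The function $d(y)=\operatorname{dist}(y,Y)$ is continuous, vanishes on $Y$, and is positive at $y_0$. Set $g=\varepsilon-d$, where $0<\varepsilon<d(y_0)/2$. By Weierstrass, choose a polynomial $q$ with $|q-g|<\varepsilon/2$ on a compact ball containing $Y\cup\{y_0\}$. Then $q>\varepsilon/2>0$ on $Y$ and $q(y_0)<\varepsilon-d(y_0)+\varepsilon/2<0$. So $p(x,y)=q(y)$ works.

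\emph{Case 2: $y_0\in Y$, so $x_0\notin K_{y_0}$.} The slice $K_{y_0}$ is compact and convex. By classical Hahn--Banach there is an affine $\ell(x)=a\cdot x+b$ with $\ell>0$ on $K_{y_0}$ and $\ell(x_0)<0$. Try
\[
p(x,y)=\ell(x)+M\,|y-y_0|^2 ,
\]
which is in $\Paff$ and satisfies $p(z)=\ell(x_0)<0$. On $K$ we must show $p\ge 0$. Let $\delta=\min_{K_{y_0}}\ell>0$.
\begin{itemize}
\item By compactness of $K$ (upper semicontinuity of slices), there is $r>0$ such that $|y-y_0|<r$ and $(x,y)\in K$ imply $\operatorname{dist}(x,K_{y_0})$ is small enough that $\ell(x)>\delta/2>0$. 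Otherwise a sequence $(x_k,y_k)\in K$ with $y_k\to y_0$ and $\ell(x_k)\le\delta/2$ would have a limit point $(x,y_0)\in K$ with $\ell(x)\le\delta/2$, a contradiction.
\item Where $|y-y_0|\ge r$, $\ell$ is bounded below on $K$ by some $-C$. Taking $M\ge C/r^2$ gives $p\ge 0$ there.
\end{itemize}

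The only delicate step is this local control near the fiber $y_0$, which rests on closedness of $K$. Note that no regularity is needed. Partial convexity is used only to have a convex slice $K_{y_0}$ so that $\ell$ exists.
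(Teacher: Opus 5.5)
Your proof is correct and follows essentially the paper's argument. You use the same case split (whether the slice over $y_0$ is empty), the same separating polynomial $\ell(x)+M|y-y_0|^2$, and the same compactness argument near the fiber $y_0$; you split $K$ into near and far regions directly, where the paper argues by contradiction on $M$. The only real difference is the case $y_0\notin Y$: your Weierstrass approximation of $\varepsilon-\operatorname{dist}(y,Y)$ works but is unnecessary, since the paper simply takes $p(x,y)=-1+M|y-y_0|^2$ with $M\ge 1/\min_{y\in Y}|y-y_0|^2$.
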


\begin{proof}
	Let $z = (x_z, y_z) \notin K$. Since $K$ is compact, the slice $K_{y_z} = \{x \in \mathbb{R}^n \mid (x, y_z) \in K\}$ is a compact (and, by assumption, convex) subset of $\mathbb{R}^n.$ Clearly, $x_z \notin K_{y_z}$.

	If $K_{y_z}$ is non-empty, then the Hahn-Banach separation theorem guarantees the existence of a vector $v \in \mathbb{R}^n$ and scalars $c,\delta \in \mathbb{R}$ with $\delta > 0$ such that
	$\langle v ,x \rangle + c \geq \delta > 0$ for all $x \in K_{y_z}$ but  $\langle v,  x_z \rangle + c < 0.$ Now set
	\[
	p(x,y) = \langle v ,x \rangle + c + M \|y - y_z\|^2,
	\]
	where $M > 0$ is a constant to be determined. Clearly, $p \in \Paff$ and $p(x_z, y_z) = \langle v,  x_z \rangle + c < 0$. 
	
	We must show there exists an $M$ such that $p(x,y) \geq 0$ for all $(x,y) \in K$. Assume for contradiction that no such $M$ exists. Then for every positive integer $k$ there exists a point $(x_k, y_k) \in K$ such that
	\[
	\langle v, x_k\rangle + c + k \|y_k - y_z\|^2 < 0.
	\]
	Since $K$ is compact, the function $f(x,y) =\langle v ,x \rangle + c$ attains a minimum on $K$, say $-B$ (where $B > 0$). Hence,
	\[
	-B + k \|y_k - y_z\|^2 < 0 \implies \|y_k - y_z\|^2 < \frac{B}{k}.
	\]
	Thus the $y_k$ converge to $y_z.$
	Again, since $K$ is compact, the sequence $(x_k, y_k)$ has a convergent subsequence that converges to some $(x^*, y^*) \in K$. By the above, we have $y^* = y_z$ and thus $x^* \in K_{y_z}$. 
	
	Now $\langle v, x^*\rangle + c \geq \delta > 0$. However, taking the limit of the strict inequality $$\langle v , x_k \rangle + c + k \|y_k - y_z\|^2 < 0$$ requires that $\langle v, x^*\rangle + c \leq 0,$ which is a contradiction. Thus, a sufficiently large $M$ must exist.

	On the other hand, if the slice $K_{y_z}$ is empty, no point in $K$ has $y = y_z$. In this case define
	\[
	p(x,y) = -1 + M \|y - y_z\|^2.
	\]
	Now $p \in \Paff$ and $p(x_z, y_z) = -1 < 0$. Because $K$ is compact and does not intersect the hyperplane $y = y_z$, we have
	$$\min_{(x,y) \in K} \|y - y_z\|^2 = \gamma > 0.$$
	By choosing $M \geq \frac{1}{\gamma}$, we ensure $p(x,y) \geq -1 + M\gamma \geq 0$ for all $(x,y) \in K$.
\end{proof}

\begin{remark}
	Theorem \ref{prop:sep} recovers the scalar case of \cite[Theorem 2.4]{JKMMP21}. The latter only assumes $K$ is closed, however, it uses the additional assumption that the convex hull of $\Gamma(K)$ is closed, where $\Gamma(x,y)=(x,y,y^2).$ 
\end{remark}

\begin{example}
	The separation in Theorem \ref{prop:sep} is not always feasible if the set $K$ is not bounded.
	Indeed, let $n=m=1$ and let
	\[
	K = \{(x, y) \in \mathbb{R}^2 \mid x \geq -e^{y^2}\}.
	\]
	This set is clearly a closed partially convex set in the $x$ variable.
	%is closed, and for any fixed $y$, the section $K_y = [-e^{y^2}, \infty)$ is convex. 
	Setting $z = (-2, 0)$ we have $z \notin K$.
	
	Assume, for the sake of contradiction, that there exists a polynomial $p \in \Paff$ such that $p(x, y) \geq 0$ for all $(x, y) \in K$, and $p(-2, 0) < 0$.
	The polynomial $p$ is of the form
	\[
	p(x, y) = v(y)x + c(y),
	\]
	where $v(y)$ and $c(y)$ are polynomials in $y$.
	First, observe that for any fixed $y$, as $x \to \infty$, the point $(x, y)$ is eventually in $K$. So for $p(x, y) \geq 0$ to hold for all arbitrarily large $x$, we must have $v(y) \geq 0$ for all $y \in \mathbb{R}$.
	Next, at the boundary of $K,$ we have
	\[
	v(y)(-e^{y^2}) + c(y) \geq 0 \implies c(y) \geq v(y)e^{y^2},
	\]
	which must hold for all $y \in \mathbb{R}$.
	Now at $z=(-2,0),$ we have by assumption that 
	\[
	p(-2, 0) = -2v(0) + c(0) < 0 \implies c(0) < 2v(0).
	\]
	If $v(y)$ were the zero polynomial, then we would require $c(y) \geq 0$ for all $y$. This would mean $c(0) \geq 0$, which contradicts $c(0) < 2v(0) = 0$. Thus, $v(y)$ is not the zero polynomial.
	
	Since $v(y)$ is a non-zero polynomial and $v(y) \geq 0$ for all $y$, it is strictly positive for sufficiently large $|y|$. Consequently, as $|y| \to \infty$, the term $v(y)e^{y^2}$ grows at least exponentially. This contradicts the requirement that $c(y) \geq v(y)e^{y^2}$ for all $y$ as $c(y)$ is a polynomial. Therefore, no such polynomial $p$ exists.
\end{example}

\begin{remark}
	The separation in Theorem \ref{prop:sep} cannot be generalized to arbitrary closed partially convex sets, even if we relax the separating class to include all continuous partially affine functions. The next proposition clarifies the nature of this limitation (the additional assumptions on the set $K$ in this proposition are closely related to the notion of regularity introduced in Section \ref{sec:reg}).
\end{remark}

\begin{definition}
	Let $\Phi: y \mapsto K_y$ be a set-valued map, where $y \in Y \subseteq \R^m$ and $K_y \subseteq \R^n$ for all $y \in Y.$\looseness=-1
	\begin{enumerate}[(a)]
		\item The map $\Phi$ is \textbf{lower hemicontinuous (LHC)} at $y$ if for every sequence $(y_k) \subset \mathbb{R}^m$ with $y_k \to y$, and for every $x \in K_y$, there exists a sequence $(x_k)$ such that
		\[
		x_k \in K_{y_k} \text{ for all } k, \quad \text{and} \quad x_k \to x.
		\]
		\item Assume $\Phi$ is compact-valued, meaning that each $K_y$ is compact. Then $\Phi$ is \textbf{upper hemicontinuous (UHC)} at $y$ if for every sequence $(y_k) \subset \mathbb{R}^m$ with $y_k \to y$, and every sequence $(x_k)$ such that $x_k \in K_{y_k}$ for all $k$, there exists a subsequence $(x_{k_j})$ and a point $x \in K_y$ such that
		\[
		x_{k_j} \to x \in K_y.
		\]
	\end{enumerate}
\end{definition}

\begin{proposition}\label{prop:sep2}
	Let $K \subseteq \mathbb{R}^{n+m}$ be a closed partially convex set in $x$ such that $\pi_2(K)$ is closed. Assume that for every $y \in \pi_2(K)$, the slice $K_y = \{x \in \mathbb{R}^n \mid (x, y) \in K\}$ is compact. Assume further that the set-valued mapping $y \mapsto K_y$ is continuous (both upper and lower hemicontinuous) on $\pi_2(K)$. If $z = (x_z, y_z) \notin K$, then there exists a continuous function $f: \mathbb{R}^{n+m} \to \mathbb{R}$ that is affine in $x$ for every fixed $y$, such that $f(x, y) \ge 0$ for all $(x, y) \in K$ and $f(x_z, y_z) < 0$.
\end{proposition}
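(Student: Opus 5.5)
We mimic the proof of Theorem \ref{prop:sep}, replacing the quadratic penalty $M\|y-y_z\|^2$ by a continuous penalty that may grow arbitrarily fast in $|y|$. Write $Y=\pi_2(K)$. There are two cases.

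\emph{Case $y_z\notin Y$.} Since $Y$ is closed, the function $d(y)=\operatorname{dist}(y,Y)$ is continuous, vanishes exactly on $Y$, and satisfies $d(y_z)>0$. Put $f(x,y)=-d(y)$ if we only need weak inequality on $K$. Then $f\ge 0$ on $K$ because $d=0$ on $Y$. Also $f(z)<0$, and $f$ is constant in $x$, hence affine in $x$.

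\emph{Case $y_z\in Y$.} The slice $K_{y_z}$ is compact and convex and $x_z\notin K_{y_z}$. As in Theorem \ref{prop:sep}, choose $v\in\R^n$, $c\in\R$ and $\delta>0$ with $\langle v,x\rangle+c\ge\delta$ on $K_{y_z}$ and $\langle v,x_z\rangle+c<0$. We look for
\[
f(x,y)=\langle v,x\rangle+c+g(y),
\]
where $g:\R^m\to[0,\infty)$ is continuous with $g(y_z)=0$. Such an $f$ is automatically affine in $x$ and satisfies $f(z)<0$. It remains to arrange
\[
g(y)\ge m_-(y):=\max\Bigl(0,\,-\min_{x\in K_y}(\langle v,x\rangle+c)\Bigr)\quad\text{for } y\in Y.
\]
The minimum defining $m_-$ exists because each $K_y$ is compact. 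Note that $m_-(y_z)=0$.

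The key step is to show that $h(y)=\min_{x\in K_y}(\langle v,x\rangle+c)$ is continuous on $Y$. This is Berge's maximum theorem applied to the continuous correspondence $y\mapsto K_y$ and the continuous objective $\langle v,x\rangle+c$.
\begin{itemize}
\item Upper hemicontinuity gives lower semicontinuity of $h$. For $y_k\to y$, take minimizers $x_k\in K_{y_k}$; a subsequence converges to some $x\in K_y$, so $\liminf h(y_k)\ge h(y)$.
\item Lower hemicontinuity gives upper semicontinuity. Approximate a minimizer $x\in K_y$ by points $x_k\in K_{y_k}$ with $x_k\to x$; then $\limsup h(y_k)\le h(y)$.
\end{itemize}
I would verify the two semicontinuities directly in this way. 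It follows that $m_-=\max(0,-h)$ is continuous on the closed set $Y$ and that $m_-(y_z)=0$.

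Now extend $m_-$ from the closed set $Y$ to a continuous $g\ge 0$ on $\R^m$ by the Tietze extension theorem, and take positive parts if necessary. Then $g(y_z)=m_-(y_z)=0$, so $f(z)=\langle v,x_z\rangle+c<0$. For $(x,y)\in K$ we have $x\in K_y$, hence
\[
f(x,y)\ge h(y)+m_-(y)\ge 0.
\]

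I expect the only genuine obstacle to be the continuity of $h$. This is exactly where both hemicontinuity hypotheses and the compactness of the slices are needed. The remark preceding the proposition indicates that without these hypotheses the separation can fail.
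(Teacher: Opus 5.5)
Your proof is correct and takes essentially the same route as the paper. Both use strict Hahn--Banach separation in the slice $K_{y_z}$, continuity of $y\mapsto\min_{x\in K_y}(\langle v,x\rangle+c)$ via Berge's theorem (you check it directly from the two hemicontinuity hypotheses instead of citing it), a Tietze extension, and the correction term $\max(0,-h)$. The only cosmetic difference is the case $y_z\notin\pi_2(K)$, where your $-\operatorname{dist}(y,\pi_2(K))$ works just as well as the paper's $\|y-y_z\|^2-d^2/2$.
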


\begin{proof}
	Let $z = (x_z, y_z) \notin K$.
	First assume $y_z \notin \pi_2(K)$. Recall that the projection $\pi_2(K)$ is assumed closed. Thus, the distance from $y_z$ to $\pi_2(K)$ is strictly positive, say $d > 0$. We can define the function 
	$$f(x,y) = \|y - y_z\|^2 - \frac{d^2}{2}.$$
	This function is constant with respect to $x$ (and thus trivially affine in $x$). For any $(x,y) \in K$, we have $\|y - y_z\| \ge d$, so $f(x,y) \ge \frac{d^2}{2} > 0$. However, at $z$, $f(x_z, y_z) = 0 - \frac{d^2}{2} < 0$. 
	
	Now assume $y_z \in \pi_2(K)$.
	Since $z \notin K$, we have $x_z \notin K_{y_z}$. By assumption, the slice $K_{y_z}$ is a compact convex subset of $\mathbb{R}^n$. By the strict Hahn-Banach separation theorem, there exists a vector $v \in \mathbb{R}^n$ and a scalar $c \in \mathbb{R}$ such that 
	%the affine function $F(x) = \langle v, x \rangle + c$ satisfies:
	$$F(x) = \langle v, x \rangle + c \ge \delta > 0 \quad \text{for all } x \in K_{y_z}$$
	and
	$$F(x_z) < 0.$$
	
	The function $F$ can be extended to $\mathbb{R}^{n+m}$ by setting $F(x, y) = \langle v, x \rangle + c$. 
	%To account for the variation of $K_y$ as $y$ changes, we 
	Now define the minimum value function $m: \pi_2(K) \to \mathbb{R}$ by
	$$m(y) = \min_{x \in K_y} F(x, y).$$
	Since $K_y$ is compact for all $y$, $m(y)$ is well-defined and finite. Next, because $F$ is continuous and the mapping $y \mapsto K_y$ is continuous, Berge's maximum theorem \cite[Theorem 17.31]{AB06} guarantees that $m(y)$ is a continuous function on $\pi_2(K).$ By the Tietze extension theorem, we can continuously extend $m(y)$ to all of $\mathbb{R}^m$.\looseness=-1
	
	Let  $\mu: \mathbb{R}^m \to \mathbb{R}$ be the correction term defined as
	$$\mu(y) = \max(0, -m(y)).$$
	Finally, define the separating function $f(x, y)$ as
	$$f(x, y) = F(x, y) + \mu(y).$$
	For any fixed $y$,  $\mu(y)$ is a constant, so $x \mapsto \langle v, x \rangle + c + \mu(y)$ is affine. Thus $f \in \Caff.$ 
	
	It remains to check the separation properties of $f.$
	At $y = y_z$, we have $m(y_z) = \min_{x \in K_{y_z}} F(x, y_z) \ge \delta > 0$. Therefore, $\mu(y_z) = \max(0, -m(y_z)) = 0$ and
	$$f(x_z, y_z) = F(x_z, y_z) + 0 = \langle v, x_z \rangle + c < 0.$$
	For any $(x, y) \in K$, we have $F(x, y) \ge m(y)$ by the definition of $m(y)$. Therefore
	$$f(x, y) = F(x, y) + \max(0, -m(y)) \ge m(y) + \max(0, -m(y)) \ge 0.$$
	Thus, $f(x, y)$ is a continuous, partially affine function that strictly separates $z$ from $K.$
\end{proof}

\section{Regularity of partially convex sets}\label{sec:reg}

Building on the separation results established in Subsection \ref{sec:sep}, this section introduces regularity conditions that ensure partially convex sets are topologically well-behaved. These assumptions imply a unique representation of continuous partially affine functions on \textit{regular} partially convex sets.\looseness=-1

Let $K \subseteq \R^{n+m}$ be a partially convex set.
	In general, one cannot assume that any continuous partially affine function $f$ on $K$  can be written as
	\[
	f(x,y) = c_0(y) + \sum_{i=1}^n c_i(y) x_i
	\]
	where $c_i(y)$ are continuous functions on the projection $\pi_2(K)$.
	This is because it is not generally possible to extend $f$ to a continuous partially affine function $F: \pi_1(K) \times \pi_2(K) \to \R.$

	\begin{example}\label{ex:not reg}
	Let $n = m = 1$ and let $K \subset \mathbb{R}^2$ be defined as
	\[
	K = \left\{ (x, y) \in \mathbb{R}^2 \mid y \in [0, 1], \ |x| \le y \right\}.
	\]
			\begin{center}
			\begin{tikzpicture}[scale=2.8]
				\fill[blue!30, opacity=0.5] 
				(0,0) 
				-- plot[domain=0:1, variable=\y, samples=100] ({\y}, \y) 
				-- ({-1}, 1) 
				-- plot[domain=1:0, variable=\y, samples=100] ({-\y}, \y) 
				-- cycle;
				
				% Draw the boundary lines thick
				\draw[thick, blue!80!black] 
				(0,0) 
				-- plot[domain=0:1, variable=\y, samples=100] ({\y}, \y);
				
				\draw[thick, blue!80!black] 
				(0,0) 
				-- plot[domain=0:1, variable=\y, samples=100] ({-\y}, \y);
				
				\draw[thick, blue!80!black] (-1,1) -- (1,1);
				
				% Draw Grid/Guides (Optional, helps visualize scale)
				\draw[dashed, gray!50] (-1,0) -- (-1,1);
				\draw[dashed, gray!50] (1,0) -- (1,1);
				\draw[dashed, gray!50] (-1.2,1) -- (1.2,1);
				\draw[thick, blue!80!black] (-1,1) -- (1,1);
				
				% Draw Axes
				\draw[->, thick] (-1.5,0) -- (1.5,0) node[right] {$x$};
				\draw[->, thick] (0,-0.2) -- (0,1.3) node[above] {$y$};
				
				% Axis Ticks and Labels
				\draw (1,0.02) -- (1,-0.02) node[below] {$1$};
				\draw (-1,0.02) -- (-1,-0.02) node[below] {$-1$};
				\draw (0.02,1) -- (-0.02,1) node[above left] {$1$};
				\node[below left] at (0,0) {$0$};
				
				% Label the set
				\node at (0, 0.6) {\Large $K$};	
			\end{tikzpicture}
			\end{center}
		Then $K$ is closed and partially convex (the slices $K_y = [-y, y]$ are closed intervals). Note that $\pi_1(K) = [-1, 1]$ and $\pi_2(K) = [0, 1]$. Define $f : K \to \mathbb{R}$ as
		\[
		f(x, y) = \begin{cases} 
			\frac{x}{\sqrt{y}} & \text{if } y \in (0, 1] \\
			0 & \text{if } y = 0
		\end{cases}
		\]
		This function is clearly affine in $x$ for each fixed $y.$ 
		%(for any fixed $y > 0$, $f(x, y) = y^{-1/2}x$, and for $y=0$, the domain is the point $\{0\}$).
		 Moreover, the function $f$ is continuous on $K$. To see continuity at $(0, 0)$, note that for any $(x,y) \in K$, we have $|x| \le y$. Thus:
		\[
		|f(x, y)| = \left| \frac{x}{\sqrt{y}} \right| \le \frac{y}{\sqrt{y}} = \sqrt{y}.
		\]
		Hence, as $(x, y) \to (0, 0)$ inside $K$, $|f(x, y)| \to 0 = f(0, 0)$.
		
		Assume towards reaching a contradiction that there exists an extension $F : \pi_1(K) \times \pi_2(K) \to \mathbb{R}$ that is continuous and affine in $x$. By definition, $F(x, y)$ must match $f(x, y)$ on the interval $[-y, y]$ for any $y > 0$. Since affine functions on intervals with non-empty interior are uniquely determined, we have
		\[
		F(x, y) = \frac{x}{\sqrt{y}} \quad \text{for all } x \in [-1, 1], \ y > 0.
		\]
		Now consider the point $(1, 0) \in \pi_1(K) \times \pi_2(K)$. When approaching this point along the line $x = 1$, we get
		\[
		\lim_{y \to 0^+} F(1, y) = \lim_{y \to 0^+} \frac{1}{\sqrt{y}} = \infty,
		\]
		so $F$ cannot be defined at $(1, 0)$ in a way that preserves continuity. Thus no such extension exists.
	\end{example}

\begin{definition}\label{def:reg}
	Let $K \subset \R^{n+m}$ be a closed set. Then $K$ is called a \textbf{regular partially convex set} if the following holds:
	\begin{enumerate}
		\item $K$ is partially convex (i.e., $K_y$ is convex for all $y$),
		\item  the interior of $K_y$ in $\R^n$ is nonempty for all $y \in \pi_2(K),$
		\item the set-valued mapping $y \mapsto \operatorname{int}(K_y)$ is lower hemicontinuous (LHC).
	\end{enumerate}
\end{definition}

\begin{remark} \label{rem:lhc}
		%Recall that a set-valued map $\Phi : Y \to \Omega$ is LHC at $y_0$ if for every point $z_0 \in \Phi(y_0)$ and every sequence $y_n \to y_0$, there exists a sequence of points $z_n$ such that $z_n \in \Phi(y_n)$ for all $n$, and $z_n \to z_0$.
Roughly speaking, LHC guarantees that the set does not suddenly shrink or collapse as the parameter $y$ changes. 
 If each $K_y \subseteq \R^n$ is a convex set with non-empty interior, then the map $y \mapsto K_y$ is LHC if and only if the map $y \mapsto \text{int}(K_y)$ is LHC.
 
 This follows from two facts.
 First, an equivalent definition of lower hemicontinuity of a set-valued map $z \mapsto \Phi(z)$ at a point $z$ demands that for every open set $V \subseteq X$ such that $\Phi(z) \cap V \neq \emptyset$, there exists a neighborhood $U$ of $z$ such that $\Phi(z') \cap V \neq \emptyset$ for all $z' \in U.$
 
 Secondly, any convex set $K \subseteq \R^n$ with a non-empty interior 
 satisfies
 \[
 K \cap V \neq \emptyset \iff \text{int}(K) \cap V \neq \emptyset
 \]
 for any open set $V.$
Indeed, the implication $\impliedby$ is trivial and the other implication $\implies$ follows from the fact that $K$ is contained in the closure of its interior, that is, $K \subseteq \overline{\text{int}(K)}$. 

% the other implication $\implies$, suppose $K \cap V \neq \emptyset$ and let $x \in K \cap V$. Because $K$ is convex with a non-empty interior, $K$ is contained in the closure of its interior; that is, $K \subseteq \overline{\text{int}(K)}$. Therefore, $x \in \overline{\text{int}(K)}$. Since $V$ is an open neighborhood containing $x$, it must intersect $\text{int}(K)$. \hfill $\blacksquare$
\end{remark}

\begin{remark}
	The set $K$ from Example \ref{ex:not reg} is a partially convex set, which is not regular as it fails condition $2$ in Definition \ref{def:reg}.
	
The figure below further illustrates the conditions in Definition \ref{def:reg}.
%	The set $L$ on the left is a regular partially convex set. Indeed, for any $y \in [0, 1]$ we have $L_y = [-1, 1],$ so $\text{int}(L_y) = (-1, 1) \neq \emptyset.$ Moreover, the map $y \mapsto (-1, 1)$ is LHC as it is constant: for any $x \in (-1, 1)$ and sequence $y_n \to y$, we can simply choose $x_n = x \in \text{int}(L_{y_n})$ to satisfy $x_n \to x$.
 The set $L$ on the left (analogous to the set in Figure~\ref{fig1}) is a regular partially convex set. Indeed, for each $y \in [0, 1]$, the slice is $L_y = [-\sqrt{(y^2+1)/2}, \sqrt{(y^2+1)/2}]$, so $\operatorname{int}(L_y) = (-\sqrt{(y^2+1)/2}, \sqrt{(y^2+1)/2}) \neq \emptyset$. Moreover, the map $y \mapsto \operatorname{int}(L_y)$ is LHC by continuity of the boundary curves $y \mapsto \pm\sqrt{(y^2+1)/2}$.
	
	The set $M = ([-1, 1] \times (0, 1]) \cup ([-2, 2] \times \{0\})$ on the right is partially convex and satisfies conditions $1$ and $2$ of Definition \ref{def:reg}. However, it fails to satisfy condition $3$ as the map $y \mapsto \operatorname{int}(M_y)$ is not LHC.
 To see this, consider the point $x_0 \in \text{int}(M_0)$ as marked on the figure below. Consider the sequence $y_n = 1/n \to 0.$ Then there is no sequence of points $x_n \in (-1, 1) =  \text{int}(M_{y_n})$ such that $x_n \to x_0 > 1$.

	\begin{center}
		\begin{minipage}{0.45\textwidth}
			\begin{tikzpicture}[scale=2]
				% Axes
				\draw[->] (-1.5,0) -- (1.5,0) node[right] {$x$};
				\draw[->] (0,-0.2) -- (0,1.3) node[above] {$y$};
				
				% Curved, non-convex regular partially convex set (from 2x^2 <= y^2 + 1)
				\fill[green!20] plot[domain=0:1, samples=50] ({sqrt((\x*\x + 1)/2)}, \x) -- 
				plot[domain=1:0, samples=50] ({-sqrt((\x*\x + 1)/2)}, \x) -- cycle;
				\draw[thick, green!50!black] plot[domain=0:1, samples=50] ({sqrt((\x*\x + 1)/2)}, \x) -- 
				plot[domain=1:0, samples=50] ({-sqrt((\x*\x + 1)/2)}, \x) -- cycle;
				
				% Slice at y0 = 0.6
				\pgfmathsetmacro{\yzero}{0.6}
				\pgfmathsetmacro{\xval}{sqrt((\yzero*\yzero + 1)/2)}
				\draw[dashed] (-1.5, \yzero) -- (1.5, \yzero) node[right] {$y_0$};
				\draw[very thick, red] (-\xval, \yzero) -- (\xval, \yzero);
				\node[red, above] at (0, \yzero) {$\operatorname{int}(L_{y_0})$};
				\node at (0, 0.25) {$L$};
			\end{tikzpicture}
		\end{minipage}
		%\hfill
		% FIGURE 2: LHC FAILURE
		\begin{minipage}{0.45\textwidth}
			\begin{tikzpicture}[scale=2]
				% Axes
				\draw[->] (-1.5,0) -- (1.5,0) node[right] {$x$};
				\draw[->] (0,-0.2) -- (0,1.3) node[above] {$y$};
				
				% The Body (y > 0)
				\fill[blue!20] (-0.8,0) rectangle (0.8,1);
				\draw[thick, blue] (-0.8,0) -- (-0.8,1) -- (0.8,1) -- (0.8,0);
				
				% The Base (y = 0)
				\draw[ultra thick, blue] (-1.2,0) -- (1.2,0);
				%\fill[blue] (-2,0) circle (0.03);
				%\fill[blue] (2,0) circle (0.03);
				
				% Illustration of failure
				\fill[red] (1, 0) circle (1pt);
				\node[red, below] at (1, 0) {$x_0$};
				
				\draw[->, red, dashed] (1, 0.5) -- (1, 0.1);
				\node[red, right] at (1, 0.3) {No approx.};
				\node at (0, 0.3) {$M$};
				
			%	\node at (0, -0.4) {\textbf{Not Regular}};
			%	\node at (0, -0.6) {(LHC Fails at $y=0$)};
			\end{tikzpicture}
		\end{minipage}
	\end{center}
\end{remark}

\begin{lemma}\label{lem:coeff_recovery}
	Let $K \subseteq \mathbb{R}^{n+m}$ be a regular partially convex set, let $Y = \pi_2(K)$, and let $f \in C_{\text{aff}}(K)$. For each $y_0 \in Y$, there exist a neighborhood $U \subseteq Y$ of $y_0$ and continuous local selections $s_0, s_1, \dots, s_n : U \to \mathbb{R}^n$ such that:
	\begin{enumerate}[label=\rm{(\roman*)}]
		\item For every $y \in U$, the points $\{s_0(y), \dots, s_n(y)\}$ are affinely independent.
		\item The matrix $M(y) \in M_{n+1}(\mathbb{R})$ with $k$-th row $[1, (s_k(y))^T]$ is invertible on $U$, with continuous inverse $y \mapsto M(y)^{-1}$.
		\item The unique coefficients $c(y) = (c_0(y), \dots, c_n(y))^T$ satisfying $f(x, y) = c_0(y) + \sum_{i=1}^n c_i(y)x_i$ on $K_y$ are given by
		\[
		c(y) = M(y)^{-1} F(y) \quad \text{for all } y \in U,
		\]
		where $F(y) = (f(s_0(y), y), \dots, f(s_n(y), y))^T$. In particular, $y \mapsto c(y)$ is continuous on $Y$.
	\end{enumerate}
	Furthermore, if $K$ is compact, there exists a constant $C > 0$ such that for all $f \in C_{\text{aff}}(K),$ 
	%and all $y \in Y$,
	%\[
	%\|c(y)\|_{\mathbb{R}^{n+1}} \le C \sup_{x \in K_y} |f(x, y)| \le C \|f\|_\infty.
	%\]
	\[
	\sup_{y \in Y} \|c(y)\|_{\mathbb{R}^{n+1}} \le C \|f\|_\infty, \quad \textnormal{and hence} \quad \max_{0 \le i \le n} \|c_i\|_{C(Y)} \le C \|f\|_\infty.
	\]
\end{lemma}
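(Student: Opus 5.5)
The plan is to build the local selections from the lower hemicontinuity in Definition~\ref{def:reg}(3), and to derive everything else from linear algebra and continuity of $f$. Fix $y_0\in Y$. By Definition~\ref{def:reg}(2), $\operatorname{int}(K_{y_0})\neq\emptyset$, so we may choose an $n$-simplex inside it: affinely independent points $a_0,\dots,a_n\in\operatorname{int}(K_{y_0})$. Pick $r>0$ so that the closed balls $\overline{B}(a_k,r)$ lie in $\operatorname{int}(K_{y_0})$, and so that every choice $b_k\in \overline{B}(a_k,r)$ is still affinely independent. The second requirement is possible because $\det[1,b_k^T]_k$ depends continuously on the points.

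The selections should be continuous, not merely sequences as in the LHC definition, so I would avoid Michael's selection theorem and instead use convexity. By the open-set characterization of LHC from Remark~\ref{rem:lhc}, there is a neighborhood $U$ of $y_0$ such that $\operatorname{int}(K_y)\cap B(v,r/2)\neq\emptyset$ for all $y\in U$ and all vertices $v$ of small cubes around each $a_k$. A cleaner route is to use constant selections. I would show that there is a neighborhood $U$ with $a_k\in K_y$ for all $y\in U$; then $s_k(y)\equiv a_k$ is trivially continuous. To prove this, for each $k$ take the $2^n$ vertices $v$ of a cube centered at $a_k$ inside $B(a_k,r)$, with half-side $\rho$. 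LHC gives a neighborhood on which each $K_y$ contains a point within $\rho/2$ of each vertex $v$. By convexity, $K_y$ then contains the convex hull of these perturbed vertices, which still contains $a_k$ (a perturbation-of-a-cube argument, e.g.\ via a degree or coordinate-sign argument). Intersecting the finitely many neighborhoods gives $U$. This step is where the real work lies, and I expect the containment of the center in the hull of perturbed vertices to be the main obstacle. With constant selections, (i) and (ii) are immediate, since $M(y)=M$ is a constant invertible matrix.

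For (iii), on $U$ the affine function $x\mapsto f(x,y)$ on $K_y$, which has nonempty interior, is uniquely determined by its coefficients. Evaluating it at the $s_k(y)$ gives $M(y)c(y)=F(y)$. Since $F$ is continuous by continuity of $f$ and of $y\mapsto (s_k(y),y)\in K$, so is $c$ on $U$. Continuity is local, so $c$ is continuous on $Y$.

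For the bound when $K$ is compact, $|F_k(y)|\le\|f\|_\infty$, so on each $U$ we have $\|c(y)\|\le \|M(y)^{-1}\|\sqrt{n+1}\,\|f\|_\infty$. Shrinking $U$ to a relatively compact neighborhood makes $\|M^{-1}\|$ bounded on it; with constant selections it is simply constant. Since $Y=\pi_2(K)$ is compact, finitely many such neighborhoods cover $Y$, and $C$ is taken to be the maximum of the corresponding constants. This constant is independent of $f$ because the selections were chosen depending only on $K$.
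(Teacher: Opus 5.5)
Your proposal is correct, but it follows a different route from the paper. The paper gets the selections from Michael's selection theorem, applied to the modified correspondence $\Psi_k(y_0)=\{v_k\}$, $\Psi_k(y)=K_y$ for $y\neq y_0$. Its $s_k$ therefore genuinely vary with $y$. It then has to shrink $U$ to keep $\det M(y)\neq 0$, and in the compact case it passes to neighborhoods $V(y)$ with $\overline{V(y)}\subseteq U(y)$ to bound $\|M(y)^{-1}\|$.

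You instead show that regularity forces the slices near $y_0$ to contain a fixed simplex, so the selections can be taken constant. Then (i), (ii) and the uniform bound are immediate, because $M$ is constant on each $U$.

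The step you flag as the main obstacle is a short separation argument, which is exactly your coordinate-sign idea. Suppose $\|w_v-v\|<\rho/2$ for every vertex $v$ and $a_k\notin\operatorname{conv}\{w_v\}$. Strict separation gives a unit vector $u$ with $\langle u,w_v-a_k\rangle\le 0$ for all $v$. Take the vertex $v$ with $\operatorname{sign}(v-a_k)_i=\operatorname{sign}(u_i)$. Then $\langle u,v-a_k\rangle=\rho\|u\|_1\ge\rho$, hence $\langle u,w_v-a_k\rangle>\rho/2>0$, a contradiction.

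You use the open-set form of LHC, whereas the paper's definition is sequential. The two are equivalent in metric spaces. Alternatively, argue by contradiction along a sequence $y_j\to y_0$ with $a_k\notin K_{y_j}$.

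What your route buys:
\begin{itemize}
\item It is fully elementary: no paracompactness, no Michael's theorem, and no footnote trick to pin the selection's value at $y_0$.
\item It uses convexity of the slices directly.
\item It proves a stronger geometric fact: every point of $\operatorname{int}(K_{y_0})$, together with a ball around it, stays in $K_y$ for $y$ near $y_0$.
\end{itemize}
The paper's route produces continuous selections through arbitrary prescribed points of $K_{y_0}$, including boundary points. For this lemma interior points suffice, so nothing is lost by your choice.

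Once you commit to constant selections, two parts of your write-up become unnecessary: the initial detour through non-constant approximations, and the requirement that every perturbed choice $b_k\in\overline{B}(a_k,r)$ remain affinely independent.
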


\begin{proof}
	Let $y_0 \in Y = \pi_2(K)$. Since $\operatorname{int}(K_{y_0}) \neq \emptyset$, there exist $n + 1$ affinely independent points $v_0, \dots, v_n \in \operatorname{int}(K_{y_0})$. 
	By regularity of $K$ (Definition~\ref{def:reg}) and Remark~\ref{rem:lhc}, the slice mapping $y \mapsto K_y$ has non-empty closed convex values in $\mathbb{R}^n$ and is lower hemicontinuous (LHC). By the Michael selection theorem \cite[Theorem~17.66]{AB06}, there exists an open neighborhood $U \subseteq Y$ of $y_0$ and continuous local selections $s_k : U \to \mathbb{R}^n$ such that $s_k(y) \in K_y$ for all $y \in U$ and $s_k(y_0) = v_k$ for each $k = 0, \dots, n$.\footnote{To obtain a selection $s_k$ with prescribed value  $s_k(y_0) = v_k$ the Michael selection theorem is applied to the LHC map defined by $\Psi_k(y_0) = \{v_k\}$ and $\Psi_k(y) = K_y$ for $y \neq y_0.$}

	For each $y \in U$, evaluating the affine function $f(\cdot, y)$ at the points $s_k(y)$ yields the linear system
	\[
	c_0(y) + \sum_{i=1}^n c_i(y) (s_k(y))_i = f(s_k(y), y), \quad k = 0, \dots, n,
	\]
	which in matrix form is $M(y) c(y) = F(y)$, where $M(y) \in M_{n+1}(\mathbb{R})$ has $k$-th row $[1, (s_k(y))^T]$ and $F(y) = (f(s_0(y), y), \dots, f(s_n(y), y))^T$.
	
	Because each $s_k$ and $f$ are continuous, the entries of $M(y)$ and $F(y)$ depend continuously on $y \in U$. At $y = y_0$, the points $s_k(y_0) = v_k$ are affinely independent, so $\det(M(y_0)) \neq 0$. By continuity of the determinant, shrinking $U$ ensures $\det(M(y)) \neq 0$ for all $y \in U$. In particular, the points $\{s_0(y), \dots, s_n(y)\}$ remain affinely independent on $U$, establishing (i) and (ii).
	
	Since matrix inversion is smooth on $\operatorname{GL}_{n+1}(\mathbb{R})$, the map $y \mapsto M(y)^{-1}$ is continuous on $U$. Thus, the unique coefficient vector on $U$ is given by
	\[
	c(y) = M(y)^{-1} F(y),
	\]
	which is continuous at $y_0$. Since $y_0 \in Y$ was arbitrary, $y \mapsto c(y)$ is continuous on all of $Y$, proving (iii).
	
	Now assume $K$ is compact, which implies $Y$ is compact. For each $y \in Y$, choose an open neighborhood $V(y)$ such that $\overline{V(y)} \subseteq U(y)$, where $U(y)$ is a neighborhood on which $M(y)^{-1}$ is well-defined and continuous. Since $\overline{V(y)}$ is compact, $\sup_{z \in \overline{V(y)}} \|M(z)^{-1}\| < \infty$. By compactness of $Y$, there exist finitely many points $y_1, \dots, y_p \in Y$ such that $Y \subseteq \bigcup_{j=1}^p V(y_j)$. 
	
	Setting $C_j = \sup_{z \in \overline{V(y_j)}} \|M(z)^{-1}\|$ and $C = \sqrt{n+1} \max_{1 \le j \le p} C_j < \infty$, for any $y \in Y$ there is some $j$ with $y \in V(y_j)$. Since $s_k(y) \in K_y$, we have $|f(s_k(y), y)| \le \sup_{x \in K_y} |f(x, y)|$, so $\|F(y)\|_{\mathbb{R}^{n+1}} \le \sqrt{n+1} \sup_{x \in K_y} |f(x, y)|$. Therefore,
	\[
	\|c(y)\|_{\mathbb{R}^{n+1}} \le \|M(y)^{-1}\| \|F(y)\|_{\mathbb{R}^{n+1}} \le C \sup_{x \in K_y} |f(x, y)| \le C \|f\|_\infty.
	\]
	Taking the supremum over all $y \in Y$ and noting that $|c_i(y)| \le \|c(y)\|_{\mathbb{R}^{n+1}}$, we obtain $$\max_{0 \le i \le n} \|c_i\|_{C(Y)} \le C \|f\|_\infty,$$ which completes the proof.
\end{proof}

	\begin{theorem}\label{thm:ctsXY}
		Let $K \subset \R^{n+m}$ be a regular partially convex set. If $f$ is a continuous partially affine function on $K,$ then there exist unique continuous functions $c_0, c_1, \dots, c_n: Y \to \R$ such that for all $(x,y) \in K,$
		\[
		f(x,y) = c_0(y) + \sum_{i=1}^n c_i(y)x_i.
		\]
	\end{theorem}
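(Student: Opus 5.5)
The plan is to obtain Theorem~\ref{thm:ctsXY} almost directly from Lemma~\ref{lem:coeff_recovery}, splitting the argument into existence and uniqueness of the coefficient functions.

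\emph{Existence.} Set $Y = \pi_2(K)$. For each fixed $y \in Y$, the map $x \mapsto f(x,y)$ is affine on the convex slice $K_y$. An affine function on a convex subset of $\R^n$ extends to an affine function on all of $\R^n$: extend it first to the affine hull, which is all of $\R^n$ since $\operatorname{int}(K_y) \neq \emptyset$. So there is a vector $c(y) = (c_0(y), \dots, c_n(y))$ with
\[
f(x,y) = c_0(y) + \sum_{i=1}^n c_i(y) x_i \quad \text{for all } x \in K_y.
\]
This defines the coefficient functions pointwise on $Y$.

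\emph{Continuity.} This is the substantive step, and it is exactly part (iii) of Lemma~\ref{lem:coeff_recovery}. Around each $y_0 \in Y$ there are a neighborhood $U$ and continuous selections $s_0, \dots, s_n$ of $y \mapsto K_y$ on which $c(y) = M(y)^{-1}F(y)$, with $M(y)^{-1}$ and $F(y)$ continuous. Hence each $c_i$ is continuous on $U$, and therefore on $Y$, since continuity is a local property.

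\emph{Uniqueness.} Suppose $c$ and $c'$ are two continuous coefficient tuples that both represent $f$. Then for every $y$ the affine function
\[
x \mapsto (c_0(y) - c'_0(y)) + \sum_{i=1}^n (c_i(y) - c'_i(y)) x_i
\]
vanishes on $K_y$. Because $K_y$ has nonempty interior, it contains $n+1$ affinely independent points. An affine function vanishing at $n+1$ affinely independent points is identically zero; equivalently, the matrix $M$ built from these points is invertible, as in Lemma~\ref{lem:coeff_recovery}(ii). Hence $c(y) = c'(y)$ for every $y$, so the representation is unique. Note that uniqueness holds pointwise and does not need continuity.

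The main obstacle is continuity, but it is entirely carried by the Michael-selection argument in Lemma~\ref{lem:coeff_recovery}. The only subtlety to check is that the local formula $M(y)^{-1}F(y)$ coincides with the globally defined $c(y)$. This holds by the pointwise uniqueness just established, so the local pieces patch together consistently.
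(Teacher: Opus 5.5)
Your proposal is correct and follows the paper's proof: existence and uniqueness of the coefficients hold pointwise because $K_y$ has nonempty interior (so it is full-dimensional), and continuity is delegated to Lemma~\ref{lem:coeff_recovery}(iii). You also note that the local formulas $M(y)^{-1}F(y)$ agree with the globally defined $c(y)$ by pointwise uniqueness; the paper leaves this implicit, and it is the right justification.
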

	
\begin{proof}
	Since $\operatorname{int}(K_y) \neq \emptyset$ for any $y,$ the domain of $f(\cdot, y)$ is full-dimensional\footnote{This means that the affine span of $K_y$ is $\R^n$ for all $y.$} in $\R^n$. Since $f(\cdot, y)$ is affine on $K_y$ and the representation of an affine function on a full-dimensional set is unique, the scalars $c_0(y), \dots, c_n(y)$ are uniquely defined for every $y$.  The continuity of the coefficient functions $c_0, c_1, \dots, c_n : Y \to \mathbb{R}$ follows immediately from Lemma~\ref{lem:coeff_recovery}.
\end{proof}

	\subsection{An approximation result}\label{sec:approx}
A Stone-Weierstrass-type approximation result for continuous partially affine functions on a regular partially convex set now easily follows from Theorem \ref{thm:ctsXY}. The proof is included for completeness.
	
	\begin{proposition}\label{prop:p-approx} Let $K \subseteq \R^{n+m}$ be a compact regular partially convex set.
		The completion of the space of partially affine polynomials $\Paff$ with respect to the supremum norm is the space of continuous partially affine functions $\Caff$. That is,
		\[ \overline{\Paff} = \Caff. \]
	\end{proposition}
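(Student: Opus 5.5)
The proof shows two inclusions: first $\overline{\Paff} \subseteq \Caff$, then the reverse.

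For the first inclusion, note that every $p \in \Paff$ restricts to a continuous function on $K$ that is affine in $x$ for fixed $y$, so $\Paff \subseteq \Caff$. Next, $\Caff$ is closed in $(C(K),\|\cdot\|_\infty)$. A uniform limit of continuous functions is continuous. Moreover, for fixed $y$, a pointwise limit of affine functions on the convex slice $K_y$ is again affine, since the identity $f(tx_1+(1-t)x_2,y)=tf(x_1,y)+(1-t)f(x_2,y)$ passes to the limit. Hence $\overline{\Paff} \subseteq \Caff$.

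For the reverse inclusion, fix $f \in \Caff$ and $\varepsilon>0$. By Theorem \ref{thm:ctsXY}, write $f(x,y)=c_0(y)+\sum_{i=1}^n c_i(y)x_i$ with $c_0,\dots,c_n$ continuous on $Y=\pi_2(K)$. Since $K$ is compact, $Y\subseteq\R^m$ is compact. Also $\pi_1(K)$ is compact, so $R:=\max\{1,\sup_{(x,y)\in K}\max_i |x_i|\}<\infty$. By the classical Weierstrass theorem on the compact set $Y$ (via Tietze extension to a box, or directly by Stone--Weierstrass, since polynomials in $y$ separate points of $Y$ and contain constants), choose polynomials $q_i(y)$ with $\sup_{y\in Y}|c_i(y)-q_i(y)|<\varepsilon/((n+1)R)$. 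Set $p(x,y)=q_0(y)+\sum_i q_i(y)x_i\in\Paff$. Then for all $(x,y)\in K$,
\[
|f(x,y)-p(x,y)|\le |c_0(y)-q_0(y)|+\sum_{i=1}^n |c_i(y)-q_i(y)|\,|x_i| < \varepsilon .
\]
So $f\in\overline{\Paff}$.

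The only substantive step is the representation with continuous coefficients on $Y$. This is exactly where regularity enters, as Example \ref{ex:not reg} shows, and it is already supplied by Theorem \ref{thm:ctsXY}. The remaining care is that the coefficients are only defined on $Y$, not on a box. Stone--Weierstrass applies directly on the compact set $Y$, so no extension is needed. Lemma \ref{lem:coeff_recovery} is not needed here beyond its use in Theorem \ref{thm:ctsXY}.
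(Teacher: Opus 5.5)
Your proof is correct and follows essentially the same route as the paper. One inclusion uses that uniform limits of elements of $\Caff$ stay continuous and affine in $x$, and the other combines the continuous-coefficient representation from Theorem \ref{thm:ctsXY} with Weierstrass approximation of the $c_i$ on the compact set $Y$; your tolerance $\varepsilon/((n+1)R)$ just makes explicit the paper's closing remark that the $x_i$ are bounded on $K$.
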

	
	\begin{proof}
		To prove inclusion $\overline{\Paff} \subseteq \Caff,$
		let $f \in \overline{\Paff}$. Then there exists a sequence of polynomials $(p_k)_k$ in $\Paff$ such that $p_k \to f$ uniformly on $K$. We must show that $f \in \Caff$.
		
Clearly, $f$ is continuous as it is a uniform limit of continuous functions.
 To see that $f$ is affine in $x,$ fix an arbitrary $y_0 \in \pi_2(K),$ let $x_1, x_2 \in K_{y_0}$ and $t \in [0, 1]$. 
 Since the convergence $p_k \to f$ is uniform on all of $K$, it is also uniform on the slice $K_{y_0}$. Hence
			\begin{align*}
				f(tx_1 + (1-t)x_2, y_0) &= \lim_{k\to\infty} p_k(tx_1 + (1-t)x_2, y_0) \\
				&= \lim_{k\to\infty} \left( t p_k(x_1, y_0) + (1-t) p_k(x_2, y_0) \right) \\
				&= t \lim_{k\to\infty} p_k(x_1, y_0) + (1-t) \lim_{k\to\infty} p_k(x_2, y_0) \\
				&= t f(x_1, y_0) + (1-t) f(x_2, y_0).
			\end{align*}
			This holds for all $x_1, x_2 \in K_{y_0}$ and $t\in [0, 1]$, which proves that the function $x \mapsto f(x, y_0)$ is affine on $K_{y_0}$.
		Thus $f \in \Caff$.
		
		To prove inclusion $\Caff \subseteq \overline{\Paff},$ we show that any function in $\Caff$ can be uniformly approximated by a sequence of functions in $\Paff$.	Let $f \in \Caff$. By Theorem \ref{thm:ctsXY}, the function $f$ can be written as
		\[ f(x, y) = c_0(y) + \sum_{i=1}^n c_i(y) x_i  \]
		where the $c_i(y)$ are continuous functions on $\pi_2(K)$. Since $K$ is compact, its projection $\pi_2(K)$ is also a compact subset of $\R^m$.
		
		Let $\epsilon > 0$ be given. By the Weierstrass Theorem, each of the continuous functions $c_i(y)$ on $\pi_2(K)$ has a polynomial approximation. That is, there exist polynomials $p_i(y)$ such that for all $y \in \pi_2(K),$
		\[ |c_i(y) - p_i(y)| < \epsilon.\]
		Now the polynomial $p \in \Paff,$
		\[ p(x, y) =  p_0(y) + \sum_{i=1}^n p_i(y) x_i, \]
		satisfies 
		\begin{align*}
			|f(x, y) - p(x, y)| &
			\le |c_0(y) - p_0(y)| + \sum_{i=1}^n |c_i(y) - p_i(y)| |x_i|  \\
			&\le \epsilon + \sum_{i=1}^n \epsilon |x_i|.
		\end{align*}
		Since $K$ is compact, the coordinates $x_i$ are bounded on $K$. 
	We can thus find an arbitrarily good approximation of $f$ with polynomials from $\Paff$ showing that $f$ is in the closure of $\Paff$.
	\end{proof}

		\section{Abstract Structure of $\cC_{\rm{aff}}(K)$}\label{sec:caff}
	
	We now identify the axioms characterizing the abstract structure of the space of continuous partially affine functions.
		For a regular partially convex set $K \subseteq \R^{n+m},$ the space $\Caff$ is both an Archimedean order unit (AOU) space and a finitely generated free module over $\cC(Y),$ where $Y = \pi_2(K)$ is the projection of $K$ onto $\R^m.$ We refer the reader to \cite{Al} for the basics of AOU spaces.

 \begin{definition}\label{def:module}
		A \textbf{free order unit module of type $(n,m)$} is a set $A$ equipped with the following structure:
		\begin{enumerate}
			\item \textbf{AOU Space:} $A$ is an Archimedean Order Unit (AOU) space with order unit $u$ and positive cone $A^+$.
			
			\item \textbf{Module structure over its multiplicative domain:} The space $A$ is a module over its multiplicative domain
			\[
			\mathcal{M}_A = \{z \in A \mid \forall a \in A, \, z \cdot a \in A\},
			\]
			where multiplication is induced via the canonical Kadison embedding $A \hookrightarrow C(\mathcal{S}(A))$ (see Remark~\ref{rem:kadison_mult} below). 
			%We assume that $\mathcal{M}_A$, equipped with the norm inherited from $A$, is a unital commutative real $C^*$-algebra \CO{minimally} generated by $m$ elements, with unit $1_A$ acting as the identity on $A$. By the Gelfand--Naimark theorem, the character space of $\mathcal{M}_A$ embeds homeomorphically into $\mathbb{R}^m$ as a compact subset $Y \subseteq \mathbb{R}^m$, yielding an isometric $*$-isomorphism $\mathcal{M}_A \cong C(Y)$.
			 We assume that $\mathcal{M}_{A}$, equipped with the norm inherited from $A$, is a unital commutative real $C^*$-algebra with unit $1_A$, and that $\mathcal{M}_A$ is generated by $m$ elements as a unital $C^*$-algebra. By the Gelfand–Naimark theorem, the evaluation map $\chi \mapsto (\chi(g_1), \dots, \chi(g_m))$ embeds the character space $\Omega(\mathcal{M}_A)$ homeomorphically onto a compact subset $Y\subseteq\mathbb{R}^{m}$, yielding an isometric $*$-isomorphism $\mathcal{M}_{A}\cong \mathcal{C}(Y)$.

			\item \textbf{Finitely generated and free:} $A$ is a finitely generated, free module over $\cM_A$ of rank $n+1$. This means there exists a basis $\{e_0, e_1, \dots, e_n\}$ of $A$ such that every element $a \in A$ has a unique representation
			\[ a = \sum_{i=0}^n c_i e_i \quad \text{with } c_i \in \cM_A. \]
			%\CO{The choice of a basis is part of the free order unit module structure.}
			
			\item \textbf{Unit alignment:} The order unit $u$ of $A$ is aligned with the basis and the module structure, i.e., $u = e_0$.   
			
			%\item \textbf{Fiber-wise order:} 
			%For each $y \in Y$, let $J_y$ be the closed order ideal in $A$ generated by the maximal ideal $I_y = \{f \in \mathcal{M}_A \cong \cC(Y) \mid f(y)=0\}$. The quotient space $A_y = A/J_y$ is a finite-dimensional AOU space of dimension $n+1$. Furthermore, the global order structure is determined pointwise,
			%$$A^+ = \{a \in A \mid \forall y \in Y, [a]_y \in A_y^+\},$$
			%and $[u]_y$ is the local order unit $u_y$ of $A_y$ for all $y \in Y$.

%			\item \textbf{Continuity Condition:} 
%			The set-valued map $y \mapsto A_y^+$ is LHC.
%			%, meaning that for any $y \in Y,$ element $[a]_y \in A_y^+$ and sequence $y_k \to y,$ there is a sequence $([a_k]_{y_k})_k$ with $[a_k]_{y_k} \in A_{y_k}^+$ such that $[a_k]_{y_k} \to [a]_y.$\CO{ne rabi še enkrat definicije LHC}
%			Moreover, the map $y \mapsto A_y^+$ is UHC on bounded slices, meaning that for every $r>0,$ the set-valued map
%			$$
%			y \mapsto A_y^+ \cap \{a \ | \ \|a\| \leq r\}
%			$$
%			is UHC. 
			
			\item \textbf{Fiber-wise order:} For each $y \in Y$, let $J_y$ be the closed order ideal in $A$ generated by the maximal ideal $I_y = \{f \in M_A \cong C(Y) \mid f(y) = 0\}$. The quotient space $A_y = A/J_y$ is endowed with the quotient positive cone
			\[
			A_y^+ := \{[a]_y \in A_y \mid a \in A^+\},
			\]
			which makes $A_y$ a finite-dimensional AOU space of dimension $n+1$ with local order unit $u_y = [u]_y$. Furthermore, the global order structure is determined pointwise:
			\[
			a \in A^+ \iff \forall y \in Y, \, [a]_y \in A_y^+.
			\]
			
			\item \textbf{Continuity Condition (UHC on bounded slices):}
			%\CO{reformulate with no coordinates-is LHC automatic? dont need to prove it in prop 4.10} 
%			Under the coordinate identification of each fiber $A_y$ with $\mathbb{R}^{n+1}$ via the basis $\{[e_0]_y, \dots, [e_n]_y\}$, the set-valued map $y \mapsto A_y^+ \subseteq \mathbb{R}^{n+1}$ is LHC. Moreover, for every $r > 0$, the set-valued map
%			\[
%			y \mapsto A_y^+ \cap \{v \in A_y \mid \|v\|_y \le r\} \subseteq \mathbb{R}^{n+1}
%			\]
%			is UHC.
			%The family of positive cones $(A_y^+)_{y \in Y}$ is lower hemicontinuous (for every $y_k \to y_0$ and $v \in A_{y_0}^+$, there exist $v_k \in A_{y_k}^+$ such that $v_k \to v$ in the fiber bundle) and upper hemicontinuous on bounded slices (for every sequence $y_k \to y_0$ and $v_k \in A_{y_k}^+$ with $\sup_k \|v_k\|_{y_k} < \infty$, there exists a subsequence $v_{k_j}$ and an element $v_0 \in A_{y_0}^+$ such that $v_{k_j} \to v_0$ in the fiber bundle).
			For every sequence $y_k \to y_0$ in $Y$ and every sequence $v_k \in A_{y_k}^+$ satisfying $\sup_{k} \|v_k\|_{y_k} < \infty$, there exists a subsequence $(v_{k_j})$, an element $v_0 \in A_{y_0}^+$, and a global section $a \in A$ with $[a]_{y_0} = v_0$ such that 
			\[
			\lim_{j \to \infty} \|v_{k_j} - [a]_{y_{k_j}}\|_{y_{k_j}} = 0.
			\]
			(that is, $v_{k_j} \to v_0$ in the fiber bundle in the sense of Definition~\ref{def:fiber_conv} below).
			
			\item \textbf{Topological Regularity:} 
			For some (and hence any) module basis $\{e_{0}, e_{1}, \dots, e_{n}\}$ of $A$ with $e_0 = u$, the linear isomorphism $\Psi: A \to \mathcal{C}(Y)^{n+1}$ given by
			$$\Psi(a) = (c_{0}, c_{1}, \dots, c_{n}) \quad \text{ where } a = \sum_{i=0}^{n} c_{i} e_{i}$$
			is a homeomorphism between $A$ (with its order unit norm topology) and $\mathcal{C}(Y)^{n+1}$ (with the product supremum norm topology).  That is,
			 \[
			 \|a^{(k)} - a\| \to 0 \iff \forall i \in \{0, \dots, n\}, \ \sup_{y \in Y} |c_i^{(k)}(y) - c_i(y)| \to 0.
			 \]

%			\item \textbf{The Norm and Archimedean Property:}
%			\CO{Should all this follow from 5?}
%			This global norm definition is consistent with the supremum of the fiber norms,
%			\CO{Does this follow by 5?}
%			\[
%			\|a\| = \sup_{y \in Y} \left( \inf \{ \lambda > 0 \mid -\lambda \cdot 1_{A_y} \le_{A_y} [a]_y \le_{A_y} \lambda \cdot 1_{A_y} \} \right) = \sup_{y \in Y} \|[a]_y\|_{A_y}.
%			\]
%			
%			Moreover, for each $a,$ the function $y \mapsto \|[a]_y\|_{A_y}$ is continuous.

			%\item \textbf{Topology equivalence:} The norm topology of $A$ is equivalent to the topology of convergence within each fiber $A_y.$
		\end{enumerate}
	\end{definition}
	
		\begin{remark}\label{rem:kadison_mult} 
		Let $A$ be an AOU space. Then the Kadison representation theorem \cite{Al} asserts that $A$ is order-isomorphic to the space of continuous affine functions on the compact convex set $S(A)$ (the state space of $A$). Consequently, $A$ admits a canonical embedding into the commutative $C^*$-algebra $\cC(S(A))$, the space of all continuous functions on $S(A).$ Through this embedding, the natural pointwise multiplication in $\cC(S(A))$ induces a well-defined module action of the subalgebra $\cM_A \subseteq \cC(S(A))$ on $A$.
	\end{remark}

%		\begin{remark} Let $A$ be a free order unit module dual (as an AOU space) to a compact convex set $K.$ By definition, the multiplicative domain $\mathcal{M}_A$ is a subspace of $A$. Since $A$ can be canonically embedded into $\cC(K),$ the multiplicative domain $\mathcal{M}_A$ can be identified with a subset of $C(K).$ The module action of $\cM_A$ on $A$ is realized inherently via the pointwise multiplication of $C(K)$. 
%			%Therefore, the identification $\mathcal{M}_A \cong C(Y)$ naturally positions $C(Y)$ as a $C^*$-subalgebra of $C(K)$
%	\end{remark}
%	
		\begin{definition} \label{def:fiber_conv}
		Let $A$ be a free order unit module. Let $y_k \to y_0$ be a convergent sequence in $Y$. A sequence of elements $[a_k]_{y_k}$ in the fibers $A_{y_k}$ is said to \textbf{converge} to an element $[a_0]_{y_0} \in A_{y_0}$ if there exists a global section $a \in A$ such that $[a]_{y_0} = [a_0]_{y_0}$ and
		\[ \lim_{k \to \infty} \| [a_k]_{y_k} - [a]_{y_k} \|_{y_k} = 0. \]
		Here $\|\cdot\|_{y_k} = \|\cdot\|_{A_{y_k}}.$
	\end{definition}
	
		\begin{remark}\label{re:lhc1}
		Note that the map $y \mapsto A_y^+$ is automatically LHC under the bundle convergence of Definition~\ref{def:fiber_conv} (i.e., for every $y_k \to y_0$ and $v \in A_{y_0}^+$, there exist $v_k \in A_{y_k}^+$ such that $v_k \to v$). Indeed, for any $v \in A_{y_0}^+$, by Axiom~(5) there exists a global positive element $a \in A^+$ such that $[a]_{y_0} = v$. Setting $v_k = [a]_{y_k} \in A_{y_k}^+$ for any sequence $y_k \to y_0$, we have $\|v_k - [a]_{y_k}\|_{y_k} = 0 \to 0$, so $v_k \to v$ in the fiber bundle. Thus, only the upper hemicontinuity of bounded slices is required as an axiom.
	\end{remark}

\begin{definition}\label{def:module_iso}
	Let $A$ and $B$ be two free order unit modules of type $(n,m)$ with multiplicative domains $\mathcal{M}_A$ and $\mathcal{M}_B$, respectively. A map $\varphi : A \to B$ is an \textbf{isomorphism of free order unit modules} if:
	\begin{enumerate}
		\item $\varphi$ is a unital order isomorphism (i.e., a linear bijection with $\varphi(u_A) = u_B$ and $\varphi(A^+) = B^+$);
		\item the restriction $\rho = \varphi|_{\mathcal{M}_A}$ maps into $\mathcal{M}_B$ and is an isometric $*$-isomorphism of $C^*$-algebras;
		\item $\varphi$ preserves the module action along $\rho$, that is,
		\[
		\varphi(z \cdot a) = \rho(z) \cdot \varphi(a) \quad \text{for all } z \in \mathcal{M}_A \text{ and } a \in A.
		\]
	\end{enumerate}
\end{definition}
	
\begin{remark}\label{re:conv}
%	By Axiom (3), any element $a \in A$ has a unique representation
%	\[
%	a = \sum_{i=0}^n c_i e_i, \quad \text{where } c_i \in \cM_A \cong \cC(Y).
%	\]
%	An element $[a]_y$ of the fiber $A_y$ as defined in Axiom (5) is the result of evaluating the coefficient functions $c_i$ at the specific point $y,$
%	\[
%	[a]_y = \sum_{i=0}^n c_i(y) [e_i]_y.
%	\]
%	Here $c_i(y) \in \R$ and $[e_0]_y, \dots, [e_n]_y$ are the projections of the global basis vectors, which form a basis of $A_y$. 
	
	By Axiom~(3), every element $a\in A$ has a unique representation
	\[
	a=\sum_{i=0}^n c_i e_i,
	\qquad c_i\in \cM_A\cong \cC(Y).
	\]
	We first explain how this representation descends to the fibers. For $g\in \cM_A$ and $a\in A$ we claim that
	\begin{equation}\label{eq:fiber-module-action}
		[g\cdot a]_y=g(y)[a]_y,
		\qquad y\in Y.
	\end{equation}
	Indeed, put $h=g-g(y)1_A\in I_y$ 
	%Since $\cM_A$ is a real $C^*$-algebra, 
	and note that $|h|\in I_y\subseteq J_y$. Moreover, through the
	canonical Kadison embedding $A\hookrightarrow \cC(S(A))$ we have
	\[
	-\|a\|\,|h|
	\leq h\cdot a
	\leq \|a\|\,|h|.
	\]
	As $J_y$ is an order ideal containing $|h|$, it follows that
	$h\cdot a\in J_y$. Hence
	\[
	[g\cdot a]_y
	=
	[g(y)a+h\cdot a]_y
	=
	g(y)[a]_y,
	\]
	proving \eqref{eq:fiber-module-action}.
	Consequently, if
	$
	a=\sum_{i=0}^n c_i e_i,
	$
	then
	\begin{equation*}
		[a]_y
		=
		\sum_{i=0}^n c_i(y)[e_i]_y.
	\end{equation*}
	Thus $\{[e_0]_y,\ldots,[e_n]_y\}$ spans $A_y$. Since Axiom~(5)
	gives $\dim A_y=n+1$, these vectors form a basis of $A_y$. In
	particular, the coefficient vector of $[a]_y$ with respect to this
	basis is
	\[
	\bigl(c_0(y),\ldots,c_n(y)\bigr)\in\mathbb R^{n+1}.
	\]
\end{remark}

\begin{remark}
	If $\{e_i\}_{i=0}^n$ and $\{e_i'\}_{i=0}^n$ are two module bases of
$A$ over $\cM_A\cong C(Y)$, the corresponding change-of-basis matrix
$
M(y)\in \operatorname{GL}_{n+1}(\mathbb R)
$
depends continuously on $y\in Y$. Since $Y$ is compact, both
$M(y)$ and $M(y)^{-1}$ are uniformly bounded. It follows that the
topological regularity condition in Axiom~(7) is independent of the
choice of module basis.
\end{remark}

%\begin{proposition}
%	Let $A$ be a free order unit module over $\mathcal{C}(Y)$. The total space
%	\[
%	E = \bigcup_{y \in Y} (A_y \times \{y\})
%	\]
%	equipped with the section topology induced by the bundle convergence in Definition \ref{def:fiber_conv} is Hausdorff.
%\end{proposition}

	\begin{proposition}\label{prop:ns}
	Let $A$ be a free order unit module. Then the following holds:
	\begin{enumerate}
		\item The global norm is consistent with the fiber norms,
		\[ \|a\| = \sup_{y \in Y} \|[a]_y\|_y. \]
		\item The function $y \mapsto \|[a]_y\|_y$ is upper semicontinuous on $Y$.
		\item If $a \in A$ and $y_k\to y_0$ in $Y,$ then 
		\begin{equation*}%\label{eq:vanishing-criterion}
			\|[a]_{y_k}\|_{y_k}\to0
			\quad\Longrightarrow\quad
			[a]_{y_0}=0.
		\end{equation*}
	\end{enumerate}
\end{proposition}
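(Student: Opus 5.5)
For (1), the inequality $\|[a]_y\|_y \le \|a\|$ is immediate: the quotient map $A \to A_y$ is unital and positive by the definition of $A_y^+$, so $-\|a\|u \le a \le \|a\|u$ descends to $-\|a\|u_y \le [a]_y \le \|a\|u_y$. For the reverse inequality, set $\lambda > \sup_y \|[a]_y\|_y$. Then for each $y$ we have $\lambda u_y \pm [a]_y \in A_y^+$, that is, $[\lambda u \pm a]_y \in A_y^+$ for all $y$. The pointwise order axiom (5) then gives $\lambda u \pm a \in A^+$, hence $\|a\| \le \lambda$. Letting $\lambda$ decrease to the supremum proves (1). One should check that the fiber norm is the order unit norm of $(A_y, u_y)$; this holds because $A_y$ is an AOU space with unit $u_y$. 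Note also that the supremum is finite, since each term is bounded by $\|a\|$.

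For (2), I fix $y_0$ and $y_k \to y_0$ with $\|[a]_{y_k}\|_{y_k} \ge t$ for all $k$, and aim to show $\|[a]_{y_0}\|_{y_0} \ge t$. The natural route is to use states, since $\|v\|_y = \max\{|\phi(v)| : \phi \in S(A_y)\}$. Each state $\phi$ of $A_y$ composed with the quotient map is a state of $A$ vanishing on $J_y$. So I choose states $\phi_k$ of $A$ that vanish on $J_{y_k}$ and satisfy $|\phi_k(a)| = \|[a]_{y_k}\|_{y_k}$. By weak$^*$ compactness of $S(A)$, a subnet converges to some $\phi$ with $|\phi(a)| \ge t$.

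The main obstacle is showing that $\phi$ vanishes on $J_{y_0}$, so that it factors through $A_{y_0}$. For this, take $g \in I_{y_0}$ and $b \in A$. By the argument in Remark~\ref{re:conv}, $\phi_k(g\cdot b) = g(y_k)\phi_k(b)$, because $\phi_k$ kills $(g-g(y_k))\cdot b \in J_{y_k}$. Since $|g(y_k)\phi_k(b)| \le |g(y_k)|\,\|b\| \to 0$, we get $\phi(g\cdot b) = 0$. As $J_{y_0}$ is the closed order ideal generated by $I_y$, its elements are norm limits of $c$ with $-h \le c \le h$ for some $h = \sum |g_j|$, $g_j \in I_{y_0}$. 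For such $c$, $|\phi(c)| \le \phi(h) = 0$, where $\phi(|g_j|)=0$ by the previous step applied with $b = u$ and $|g_j| \in I_{y_0}$. Hence $\phi$ induces a state on $A_{y_0}$, and $\|[a]_{y_0}\|_{y_0} \ge |\phi(a)| \ge t$, which gives upper semicontinuity. An alternative route would use axiom (6), but the state argument avoids needing positivity of the $v_k$.

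For (3), I write $a = \sum c_i e_i$, so that $[a]_y = \sum c_i(y)[e_i]_y$ by Remark~\ref{re:conv}. Define $b = \sum (c_i - c_i(y_0)) e_i = a - \sum c_i(y_0) e_i$. Then $[b]_{y_0} = [a]_{y_0} - [a']_{y_0}$ with $a' = \sum c_i(y_0)e_i$, and $\|[b]_{y_k}\|_{y_k} \le \sum_i |c_i(y_k)-c_i(y_0)|\,\|e_i\| \to 0$. Consequently $\|[a']_{y_k}\|_{y_k} \to 0$, where $a'$ is a fixed element. By (2), $y \mapsto \|[a']_y\|_y$ is upper semicontinuous, but this inequality goes the wrong way: it yields only $\limsup \le$ the value at $y_0$, not the reverse. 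So I instead rerun the state argument of (2) in the other direction. Pick any state $\psi$ of $A_{y_0}$ and suppose $[a']_{y_0} \neq 0$. Since $A_{y_0}$ is finite dimensional, the values $|\psi([a'])|$ over states detect nonzero elements, and by Remark~\ref{re:lhc1} together with axiom (6), positivity near $y_0$ transfers. Concretely, choose $p \in A^+$ with $[p]_{y_0} = \|[a']_{y_0}\|u_{y_0} \pm [a']_{y_0}$ suitably boundary. The cleaner route is to apply axiom (6) to $v_k = \|[a']_{y_k}\|u_{y_k} - [a']_{y_k} \in A_{y_k}^+$, which are bounded; they converge in the bundle to some $v_0 \in A_{y_0}^+$. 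Since $v_k - [-a']_{y_k} \to 0$ and the limit is unique by the pointwise formula, this gives $-[a']_{y_0} \in A_{y_0}^+$. Symmetrically $[a']_{y_0} \in A_{y_0}^+$, and the Archimedean property (properness of the cone) then forces $[a']_{y_0} = 0$. Hence $[a]_{y_0} = 0$.
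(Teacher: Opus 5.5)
Your proof of (1) is the paper's. Your proof of (2) is correct but takes a different route. Instead of perturbing $\lambda u_{y_0}-[a]_{y_0}$ inside a simplicial subcone of $A_{y_0}^+$, you take a weak$^*$ cluster point $\phi$ of norming states factoring through $A_{y_k}$, and show it kills $J_{y_0}$ via $\phi_k(g\cdot b)=g(y_k)\phi_k(b)$. That argument needs neither Axiom~(6) nor Axiom~(7).

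The gap is in (3). Applying Axiom~(6) to $v_k=\|[a']_{y_k}\|_{y_k}u_{y_k}-[a']_{y_k}$ gives a section $c\in A$ with $[c]_{y_0}=v_0\in A_{y_0}^+$ and $\|v_{k_j}-[c]_{y_{k_j}}\|_{y_{k_j}}\to 0$. Since also $\|v_{k_j}-[-a']_{y_{k_j}}\|_{y_{k_j}}\to 0$, you get $\|[c+a']_{y_{k_j}}\|_{y_{k_j}}\to 0$. Identifying $v_0$ with $-[a']_{y_0}$ therefore requires $[c+a']_{y_0}=0$, which is statement (3) itself applied to $c+a'$. So the appeal to uniqueness of bundle limits is circular.

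The coordinate formula $[b]_y=\sum_i d_i(y)[e_i]_y$ does not break the circle. It says nothing about whether the fiber norms $\|\cdot\|_{y_k}$ can shrink a coefficient vector that stays away from zero. The uniform bound $\|\sum_i v_i[e_i]_y\|_y\ge m\|\mathbf{v}\|$ that would rule this out (Proposition~\ref{prop:conv}) is derived in the paper from Proposition~\ref{prop:normc} and Lemma~\ref{le:closed}, and both depend on (3). In fact your Axiom~(6) step is exactly the argument of Lemma~\ref{le:closed}, which invokes (3) at the point where you invoke uniqueness. Reducing to $a'$ with constant coefficients does not help. The state-based attempt you begin is left unfinished; running the argument of (2) in reverse would require approximating states of $A_{y_0}$ by states factoring through $A_{y_k}$, a lower hemicontinuity that is only obtained later.

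What is missing is Axiom~(7), which bounds the coefficients by the global norm and which your argument never uses. Because the global norm of $a$ (or $a'$) need not be small, it has to be localized. The paper argues by contradiction:
\begin{itemize}
\item Assume $c_j(y_0)\neq 0$ and choose $\lambda_k\to 0$ with $\|[a]_{y_k}\|_{y_k}<\lambda_k$.
\item Use (2) to get open neighborhoods $U_k$ of $y_k$ on which $\|[a]_y\|_y\le 2\lambda_k$.
\item Take $f_k\in\mathcal{C}(Y)$ with $0\le f_k\le 1$, $f_k(y_k)=1$ and support in $U_k$.
\item By (1) and $[f_k\cdot a]_y=f_k(y)[a]_y$, one has $\|f_k\cdot a\|\le 2\lambda_k\to 0$, so Axiom~(7) forces $\|f_kc_j\|_\infty\to 0$.
\item But $\|f_kc_j\|_\infty\ge|c_j(y_k)|\ge\tfrac12|c_j(y_0)|$ for large $k$, a contradiction.
\end{itemize}
Your (2) already supplies the upper semicontinuity this localization needs. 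Replacing your Axiom~(6) step with this argument completes the proof.
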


\begin{proof}
	We first prove (\textit{1}).
Since $A$ is an AOU space (over $\R$) with order unit $u,$ the norm on $A$ is defined intrinsically as
$$\|a\| = \inf\{\lambda > 0 \mid -\lambda u \le a \le \lambda u\}.$$ 

By Axiom (5),
\[
-\lambda u \le a \le \lambda u \iff  \quad -\lambda [u]_y \le_{A_y} [a]_y \le_{A_y} \lambda [u]_y \quad \forall y \in Y,
\]
and  $[u]_y = u_y.$ Thus
\[
-\lambda u \le a \le \lambda u \iff  \quad \|[a]_y\|_y \le \lambda \quad \forall y \in Y.
\]
Taking the infimum over $\lambda$, we obtain $\|a\| = \sup_{y \in Y} \|[a]_y\|_y$.

To prove upper semicontinuity (USC) in (\textit{2}),
we show that for any $\lambda \geq 0$, the set $L_{\lambda} = \{y \in Y \mid N(y) < \lambda\}$ is open. So if $y_0 \in Y$ satisfies $N(y_0) < \lambda$, then $N(y) < \lambda$ must hold for all $y$ in a neighborhood of $y_0$.
The condition $N(y_0) < \lambda$ implies that $\lambda [u]_{y_0} \pm [a]_{y_0}$ are strictly positive elements, i.e., interior points, of the cone $A_{y_0}^+$. Let $v_0 = \lambda [u]_{y_0} - [a]_{y_0}$. Since $A_{y_0}^+$ has nonempty interior by Lemma \ref{le:unit}, there exist linearly independent positive elements $[z_1]_{y_0}, \dots, [z_{n+1}]_{y_0} \in A_{y_0}^+$ such that $v_0$ lies in the interior of their conic hull (i.e., the set of all linear combinations with nonnegative coefficients of these elements).

By Remark \ref{re:lhc1}, the map $y \mapsto A_y^+$ is LHC. Thus, for each $i$ there exists a $z_i \in A$ such that $[z_i]_y \in A_y^+$ and $[z_i]_y \to [z_i]_{y_0}$ as $y \to y_0$.
Now the element $v= \lambda u - a \in A$ satisfies $[v]_{y_0} = v_0.$ The latter is in the interior of the conic hull of $[z_1]_{y_0}, \dots, [z_{n+1}]_{y_0}$ and we claim that by continuity, $[v]_y$ lies in the interior of the conic hull of  $[z_1]_y, \dots, [z_{n+1}]_y$ for all $y$ sufficiently close to $y_0$. Indeed, to find an expression of $[v]_y$ as a conic combination (i.e., linear combination with nonnegative coefficients)
\[
[v]_y = \sum_{i=1}^{n+1} c_i(y) [z_i]_y
\]
we need to solve the linear system
\[
\mathbf{v}(y) = Z(y) \cdot \mathbf{c}(y),
\]
where $\mathbf{c}(y) = (c_1(y), \dots, c_{n+1}(y))^T$ is the vector of unknown coefficients, $Z(y)$ is the square matrix whose columns are the coordinate vectors of $[z_1]_y, \dots, [z_{n+1}]_y$ with respect to the standard basis $[e_j]_y$ of $A_y$ and $\mathbf{v}(y)$ is the coordinate vector of $	[v]_y$.

The matrix $Z(y_0)$ is invertible as the $[z_i]_{y_0}$ are linearly independent, meaning $\det(Z(y_0)) \neq 0$. Since the determinant is a continuous function of the entries and the entries of $Z(y)$ vary continuously with $y$ (since $[z_i]_y \to [z_i]_{y_0}$ as $y \to y_0$), there exists a neighborhood of $y_0$ where $\det(Z(y)) \neq 0$.
So for $y$ in this neighborhood, we can solve for the coefficients uniquely,
\[
\mathbf{c}(y) = Z(y)^{-1} \mathbf{v}(y).
\]
We know that $c_i(y_0) > 0$ for all $i=1, \dots, n+1$ as $v_0$ lies in the interior of the conic hull of the $[z_i]_{y_0}.$ Since the functions $y \mapsto c_i(y)$ are continuous, we have $c_i(y)>0$ in a neighbourhood of $y_0.$

The elements in the conic hull of positive elements are positive, thus $[v]_y \in A_y^+.$
Applying the same argument to $\lambda [u]_y + [a]_y$, we conclude that there is a neighborhood $U$ of $y_0$ such that for all $y \in U$, $\lambda [u]_y \pm [a]_y \in A_y^+$. This implies $N(y) \le \lambda$ (and in fact $N(y) < \lambda$ if we apply the argument to any $\lambda' < \lambda$), proving USC.

To prove (\textit{3}) suppose, towards a contradiction, that $[a]_{y_0}\neq0$. Write
\[
a=\sum_{i=0}^n c_i e_i,
\qquad c_i\in \cC(Y).
\]
By Remark~\ref{re:conv}, there is an index $j$ such that
$
c_j(y_0)\neq0.
$
Hence, after discarding finitely many terms,
\begin{equation}\label{eq:nonzero-coefficient}
	|c_j(y_k)|
	\geq \frac12|c_j(y_ 0)|.
\end{equation}

Choose a sequence $\lambda_k>0$ such that
\[
\|[ a]_{y_k}\|_{y_k}<\lambda_k,
\qquad
\lambda_k\to0.
\]
By the upper semicontinuity established in Item (\textit{2}), for each $k$ there is
an open neighborhood $U_k$ of $y_k$ such that
\[
\|[ a]_y\|_y\leq2\lambda_k,
\qquad y\in U_k.
\]
Since $Y$ is a compact subset of $\mathbb R^m$, it is normal. Thus we
can choose $f_k\in \cC(Y)$ such that
\[
0\leq f_k\leq1,\qquad
f_k(y_k)=1,\qquad
\operatorname{supp}(f_k)\subseteq U_k.
\]
Set
$
b_k=f_k\cdot a.
$
By \eqref{eq:fiber-module-action} and Item (\textit{1}),
\[
\begin{split}
	\|b_k\|
	&=
	\sup_{y\in y}\|[f_k\cdot a]_y\|_y \\
	&=
	\sup_{y\in y}
	|f_k(y)|\,\|[ a]_y\|_y
	\leq2\lambda_k
	\to 0.
\end{split}
\]
Now the coefficient functions of $b_k$ are
$
f_kc_0,\ldots,f_kc_n.
$
By Axiom~(7), the coefficient map
$
\Psi:A\to C(Y)^{n+1}
$
is continuous. Hence there is a constant $C>0$ such that
\[
\max_{0\leq i\leq n}\|f_kc_i\|_\infty
\leq C\|b_k\|.
\]
Consequently,
$
\|f_kc_j\|_\infty\to0.
$
On the other hand,
\[
\|f_kc_j\|_\infty
\geq
|f_k(y_k)c_j(y_k)|
=
|c_j(y_k)|
\geq
\frac12|c_j(y_0)|,
\]
contradicting \eqref{eq:nonzero-coefficient}. This proves (\textit{3}).
\end{proof}

\begin{lemma}\label{le:closed}
	Let $A$ be a free order unit module. Let $E  = \bigcup_{y \in Y} (A_y \times \{y\})$ be the total space of the fibers, equipped with the section topology induced by $A$ (as characterized by Definition \ref{def:fiber_conv}). Then the union of the fiber-wise positive cones,
	$$P = \bigcup_{y \in Y} (A_y^+ \times \{y\})$$
	is a closed subset of $E$. 
%	Let $A$ be a free order unit module. Let $E = \bigcup_{y \in Y} (A_y \times \{y\})$ be the total space of the fibers, equipped with the topology induced by the global trivialization $E \cong \mathbb{R}^{n+1} \times Y$ from the module basis $\{e_0, \dots, e_n\}$. Then the union of the fiber-wise positive cones,
%	\[
%	\mathcal{P} = \bigcup_{y \in Y} \big(A_y^+ \times \{y\}\big),
%	\]
%	is a closed subset of $E$.
\end{lemma}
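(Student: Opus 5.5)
Since the section topology on $E$ is characterized by the bundle convergence of Definition~\ref{def:fiber_conv}, I will prove that $P$ is sequentially closed: whenever $(v_k,y_k)\in P$ converges in $E$ to $(v_0,y_0)$, then $v_0\in A_{y_0}^+$. Here $y_k\to y_0$ in $Y$, and there is a global section $a\in A$ with $[a]_{y_0}=v_0$ and $\|v_k-[a]_{y_k}\|_{y_k}\to 0$.

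\textbf{Step 1: bound the $v_k$.} By Proposition~\ref{prop:ns}(1), $\|[a]_{y_k}\|_{y_k}\le\|a\|$. Hence $\sup_k\|v_k\|_{y_k}\le \|a\|+\sup_k\|v_k-[a]_{y_k}\|_{y_k}<\infty$.

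\textbf{Step 2: extract a positive limit.} The bound from Step 1 is exactly the hypothesis of Axiom~(6). So there are a subsequence $(v_{k_j})$, an element $w_0\in A_{y_0}^+$, and a global section $b\in A$ with $[b]_{y_0}=w_0$ and $\|v_{k_j}-[b]_{y_{k_j}}\|_{y_{k_j}}\to 0$.

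\textbf{Step 3: uniqueness of limits.} By the triangle inequality in each fiber,
\[
\|[a-b]_{y_{k_j}}\|_{y_{k_j}}\le \|v_{k_j}-[a]_{y_{k_j}}\|_{y_{k_j}}+\|v_{k_j}-[b]_{y_{k_j}}\|_{y_{k_j}}\to 0 .
\]
Proposition~\ref{prop:ns}(3), applied to $a-b$ along $y_{k_j}\to y_0$, then gives $[a-b]_{y_0}=0$. Therefore $v_0=[a]_{y_0}=[b]_{y_0}=w_0\in A_{y_0}^+$, so $(v_0,y_0)\in P$ and $P$ is closed.

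The only delicate point is Step~3, namely that limits in the fiber bundle are unique. This is what Proposition~\ref{prop:ns}(3) provides, and it ultimately rests on Axiom~(7). If the section topology were not first countable, sequences would have to be replaced by nets; in that case I would still argue with sequences, because $Y\subseteq\R^m$ is metrizable and the topology is described through sequences in Definition~\ref{def:fiber_conv}.
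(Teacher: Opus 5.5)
Your proposal is correct and follows essentially the same route as the paper: bound $\sup_k\|v_k\|_{y_k}$ via $\|[a]_y\|_y\le\|a\|$, apply the UHC Axiom~(6) to obtain a positive limit $w_0=[b]_{y_0}$, and then use the fiberwise triangle inequality together with Proposition~\ref{prop:ns}(3) to conclude $[a]_{y_0}=[b]_{y_0}\in A_{y_0}^+$. The only cosmetic difference is that you get $\|[a]_y\|_y\le\|a\|$ from Proposition~\ref{prop:ns}(1), whereas the paper derives it directly by projecting $-\|a\|u\le a\le\|a\|u$ to the fiber; there is no circularity either way, since Proposition~\ref{prop:ns} does not use this lemma.
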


\begin{proof}
	Let $(a_k, y_k) \in P$ be a sequence converging in $E$ to some $(a_0, y_0) \in E$. By Definition~\ref{def:fiber_conv} this means $y_k \to y_0$ in $Y$ and there exists a global section $a \in A$ such that $[a]_{y_0} = a_0$ and
	\[
	\lim_{k\to\infty} \|a_k - [a]_{y_k}\|_{y_k} = 0.
	\]
	We must show that $(a_0, y_0) \in P$, which is equivalent to proving $a_0 \in A_{y_0}^+$.
	
	First, we show that the fiber norms $\|a_k\|_{y_k}$ are uniformly bounded. For the global element $a \in A$, the definition of the Archimedean order unit norm gives $-\|a\| u \le a \le \|a\| u$. Projecting onto the quotient fiber $A_y$, we have $-\|a\| [u]_y \le [a]_y \le \|a\| [u]_y$, which implies $\|[a]_y\|_y \le \|a\|$ for all $y \in Y$. By the triangle inequality in $A_{y_k}$,
	\[
	\|a_k\|_{y_k} \le \|a_k - [a]_{y_k}\|_{y_k} + \|[a]_{y_k}\|_{y_k} \le \|a_k - [a]_{y_k}\|_{y_k} + \|a\|.
	\]
	Since $\|a_k - [a]_{y_k}\|_{y_k} \to 0$, there exists $r > 0$ such that $\|a_k\|_{y_k} \le r$ for all $k$.
	
	Because $a_k \in A_{y_k}^+$ and $\sup_k \|a_k\|_{y_k} \le r < \infty$, the UHC condition in Axiom~(6) implies that there exists a subsequence $(a_{k_j})$ and an element $w_0 \in A_{y_0}^+$ such that $a_{k_j} \to w_0$ in the fiber bundle. That is, there is an element $b \in A$ with $[b]_{y_0} = w_0$ and $\|a_{k_j} - [b]_{y_{k_j}}\|_{y_{k_j}} \to 0$.
	
	By the triangle inequality in the fibers,
	\begin{align*}
		\|[a - b]_{y_{k_j}}\|_{y_{k_j}} &\le \|[a]_{y_{k_j}} - a_{k_j}\|_{y_{k_j}} + \|a_{k_j} - [b]_{y_{k_j}}\|_{y_{k_j}} \xrightarrow{j\to\infty} 0.
	\end{align*}
	By Item (\textit{3}) of Proposition \ref{prop:ns}, we have $[a - b]_{y_0} = 0$, so $a_0 = [a]_{y_0} = [b]_{y_0} \in A_{y_0}^+$. Thus $(a_0, y_0) \in P$, proving $P$ is closed in $E.$
	%Evaluating at $y_0$ gives $\|a_0 - w_0\|_{y_0} = \|[a - b]_{y_0}\|_{y_0} = 0$, so $a_0 = w_0 \in A_{y_0}^+$. Thus $(a_0, y_0) \in P$, which proves that $P$ is closed in $E$.
\end{proof}

Before the next proposition we recall a useful fact about AOU spaces.
	
\begin{lemma}\label{le:unit}
	If $A$ is an AOU space, then its positive cone $A^+$ has non-empty interior. In fact, the order unit $u$ of $A$ is an interior point of $A^+.$
\end{lemma}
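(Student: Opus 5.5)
The plan is to show directly that $u$ has a norm-ball neighbourhood contained in $A^+$, using the definition of the order unit norm $\|a\| = \inf\{\lambda > 0 \mid -\lambda u \le a \le \lambda u\}$ together with the Archimedean property. Concretely, I will show that the open unit ball around $u$ lies in $A^+$: if $\|a - u\| < 1$, then $a \in A^+$. Since every neighbourhood of $u$ meeting this ball witnesses interiority, this gives both assertions at once: $u$ is an interior point, so $\operatorname{int}(A^+) \neq \emptyset$.

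The key step is as follows. Let $a \in A$ with $\|a-u\| < 1$. By the definition of the norm as an infimum, there is some $\lambda$ with $\|a-u\| \le \lambda < 1$ such that $-\lambda u \le a - u \le \lambda u$. Here I will use the Archimedean property to justify that the infimum is attained, namely $-\|b\|u \le b \le \|b\|u$ for every $b$; alternatively, I will just pick $\lambda$ strictly between $\|a-u\|$ and $1$, which avoids needing attainment. From the left inequality I get $a \ge (1-\lambda)u$. Since $1-\lambda > 0$ and $u \in A^+$, it follows that $(1-\lambda)u \in A^+$. Because $A^+$ is a cone, the relation $a - (1-\lambda)u \in A^+$ gives $a = (a-(1-\lambda)u) + (1-\lambda)u \in A^+$.

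The only point needing care, and the step I would check most carefully, is that the norm topology used for "interior" is the order unit norm. In the applications to the fibers $A_y$, where $A_y$ is finite dimensional, all norms are equivalent, so the interior is unambiguous. This is exactly how the lemma is used in Proposition \ref{prop:ns}, where $v_0 = \lambda[u]_{y_0} - [a]_{y_0}$ is claimed to be interior. For that use I would add the remark that the same argument shows more generally that $\lambda u - a$ is interior whenever $\|a\| < \lambda$: this follows by applying the argument to $(\lambda u - a)/(\lambda - \|a\|)$, or by noting that $\mu u \le \lambda u - a$ for some $\mu > 0$ and perturbing. Nothing beyond the AOU axioms is required, so I expect no real obstacle.
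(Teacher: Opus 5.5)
Your proof is correct and is essentially the paper's argument: both show that the open order-unit-norm ball $\{x \in A : \|x-u\|<1\}$ lies in $A^+$, by choosing $r<1$ with $-ru \le x-u \le ru$ and concluding $x \ge (1-r)u \ge 0$. Your option of taking $\lambda$ strictly between $\|a-u\|$ and $1$ is legitimate, because the set of admissible $\lambda$ is upward closed (as $u \ge 0$); this makes the Archimedean attainment unnecessary.
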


\begin{proof}
	Recall that for any $x \in A,$
	$$
	\|x\| = \inf \{r >0 \ | \ -r u \leq x  \leq r u\}
	$$ is the norm on $A.$
	Let $B = \{x \in A \ | \ \|x-u\|<1\}$ be the open unit ball in $A$ centered at $u.$ If $x \in B,$ then by definition of the norm, there exists $0 \leq r < 1$ such that 
	$$
	-r u \leq x-u \leq r u.
	$$
	Hence $x \geq u (1-r) \geq 0,$ which proves that $B \subseteq A^+$ and $u$ is an interior point. 
\end{proof}

We are now in a position to prove an addendum to Proposition~\ref{prop:ns}, showing that the fiber norm function is in fact continuous.

	\begin{proposition}\label{prop:normc}
		Let $A$ be a free order unit module with $\cM_A \cong \cC(Y)$. Then the function $y \mapsto \|[a]_y\|_y$ is continuous on $Y$.
	\end{proposition}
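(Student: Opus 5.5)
By Proposition~\ref{prop:ns}(2) the function $N(y)=\|[a]_y\|_y$ is already upper semicontinuous. It therefore suffices to prove lower semicontinuity. That is, for every sequence $y_k\to y_0$ in $Y$ we want $\liminf_k N(y_k)\ge N(y_0)$. The idea is to use Lemma~\ref{le:closed}: the order inequalities that bound the fiber norms at $y_k$ pass to the limit, because $P$ is closed in the total space $E$.

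Suppose, for contradiction, that $\liminf_k N(y_k)<N(y_0)$. Passing to a subsequence and choosing $\lambda$ strictly between these values, we get $N(y_k)<\lambda<N(y_0)$ for all $k$. Since $N(y_k)<\lambda$ is the order unit norm in $A_{y_k}$ with unit $u_{y_k}=[u]_{y_k}$, we have
\[
\lambda[u]_{y_k}\pm[a]_{y_k}\in A_{y_k}^+ .
\]
This uses only the definition of the order unit norm as an infimum: some $\lambda'<\lambda$ works, and adding $(\lambda-\lambda')u_{y_k}\ge0$ preserves positivity. Hence the points $\bigl(\lambda[u]_{y_k}\pm[a]_{y_k},\,y_k\bigr)$ lie in $P$.

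Next we check that these points converge in $E$ in the sense of Definition~\ref{def:fiber_conv}. Take the global section $b_\pm=\lambda u\pm a\in A$. Then
\[
\|(\lambda[u]_{y_k}\pm[a]_{y_k})-[b_\pm]_{y_k}\|_{y_k}=0 \quad\text{for every } k,
\]
so the sequence converges to $\bigl([b_\pm]_{y_0},y_0\bigr)=\bigl(\lambda[u]_{y_0}\pm[a]_{y_0},\,y_0\bigr)$. By Lemma~\ref{le:closed} this limit lies in $P$, so
\[
-\lambda u_{y_0}\le[a]_{y_0}\le\lambda u_{y_0}\quad\text{in } A_{y_0}.
\]
Therefore $N(y_0)\le\lambda$, which contradicts $\lambda<N(y_0)$. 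Combined with upper semicontinuity, this gives continuity of $y\mapsto\|[a]_y\|_y$.

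The only substantive input is the closedness of $P$, which is exactly what Lemma~\ref{le:closed} supplies using Axiom~(6) and Proposition~\ref{prop:ns}(3). The one point to handle carefully is the strict-versus-nonstrict inequality when converting $N(y_k)<\lambda$ into the order inequalities; choosing $\lambda$ strictly above $\liminf_k N(y_k)$, as above, settles this without needing the infimum to be attained.
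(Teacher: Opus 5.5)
Your proof is correct and follows essentially the same route as the paper: upper semicontinuity comes from Proposition~\ref{prop:ns}(2), and lower semicontinuity comes from applying the closedness of the total positive cone (Lemma~\ref{le:closed}) to the constant global sections $\lambda u \pm a$. The only cosmetic difference is that the paper shows the sublevel sets $\{y : N(y)\le\lambda\}$ are closed, using that the Archimedean order unit norm bound is attained, whereas your choice of a strict $\lambda$ avoids needing that fact.
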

	
	\begin{proof}
		For a fixed $a \in A$ let $N(y) = \|[a]_y\|_y.$
		%Since the fibers are finite-dimensional, it suffices to 
		We prove continuity of $N$ by showing that $N$ is both lower semicontinuous (LSC) and upper semicontinuous (USC). The latter follows from Item (\textit{2}) of Proposition \ref{prop:ns}.
		
		To prove lower semicontinuity
		we show that for any $\lambda \geq 0$, the set $L_{\lambda} = \{y \in Y \mid N(y) \le \lambda\}$ is closed.
		Recall that in an AOU space, $\|v\| \le \lambda$ if and only if $\lambda u \pm v \in A^+$. Thus,
		\[
		L_{\lambda} = \{y \in Y \mid \lambda [u]_y \pm [a]_y \in A_y^+\}.
		\]
		If $(y_k)_k$ is a sequence in $L_{\lambda}$ converging to $y_0,$ then
		\[
		\lim_{k \to \infty} (\lambda [u]_{y_k} \pm [a]_{y_k}) = \lambda [u]_{y_0} \pm [a]_{y_0}
		\]
		in the sense of Definition \ref{def:fiber_conv}.
		%in the total space topology of the bundle $A$.
		By Lemma \ref{le:closed}, the total positive cone $\mathcal{P} = \bigcup_{y \in Y} (A_y^+ \times \{y\})$ is closed. Hence, $\lambda [u]_{y_0} \pm [a]_{y_0} \in A_{y_0}^+$, which implies $N(y_0) \le \lambda$. Hence, $L_{\lambda}$ is closed, and $N$ is LSC.
		Since $N$ is both LSC and USC, it is continuous.
%		 By Kadison's theorem \cite{??}, the AOU space $A_y$ is isomorphic to the space of affine functions on its state space $S_y.$ The latter can be identified with a compact convex set $K_y \subseteq \mathbb{R}^n.$ So an element $[a]_y$ corresponds to an affine function $z \mapsto f_a(z, y)$ on $K_y$, where
%		\[
%		f_a(z, y) = c_0(y) + \sum_{i=1}^n c_i(y)z_i, \quad \text{for } z = (z_1, \dots, z_n) \in K_y.
%		\]
%		We thus have
%		\[
%		N(y) = \|[a]_y\|_{A_y} = \sup_{z \in K_y} |f_a(z, y)|.
%		\]
%		
%		By Axiom $7,$  the functions $c_i : Y \to \mathbb{R}$ are continuous. Hence, the function $F(z, y) = |c_0(y) + \sum c_i(y)z_i|$ is jointly continuous on $\mathbb{R}^n \times Y$.
%		
%
%		 Next, by Proposition \ref{prop:Sreg}, the map $y \to K_y$ is LHC. By Lemma \ref{le:Kclosed}, the set $K = \bigcup_y K_y$ is closed. Since each $K_y$ is compact, the map $y \to K_y$ is also UHC and thus continuous.
%		 Now Berge's Maximum Theorem implies that the maximum value function
%		\[
%		y \mapsto \max_{z \in K_y} F(z, y) = \|[a]_y\|_{A_y}
%		\]
%		is continuous.
	\end{proof}

\begin{proposition}\label{prop:conv}
	Let $y_k \to y_0$ in $Y,$ let $([a_k]_{y_k})_k$ be a sequence with 
	$$[a_k]_{y_k} = \sum_{i=0}^n c^k_i [e_i]_{y_k} \in A_{y_k}$$  and let $[a_0]_{y_0} = \sum_{i=0}^n c_i [e_i]_{y_0}  \in A_{y_0}$. For any $k \geq 0$ denote by $\mathbf{c}^{(k)} \in \mathbb{R}^{n+1}$ the coefficient vector of $[a_k]_{y_k}.$ 
	
	Then the sequence $([a_k]_{y_k})_k$ converges to $[a_0]_{y_0}$ (in the sense of Definition \ref{def:fiber_conv}) if and only if the coefficient vectors converge, $\mathbf{c}^{(k)} \to \mathbf{c}^{(0)}$ in $\mathbb{R}^{n+1}$.
\end{proposition}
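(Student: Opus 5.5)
The plan is to reduce everything to a \emph{uniform} equivalence between the fiber norms $\|\cdot\|_y$ and the Euclidean norm on coefficient vectors. By Remark~\ref{re:conv}, $\{[e_0]_y,\dots,[e_n]_y\}$ is a basis of $A_y$, so each $v\in A_y$ has a unique coefficient vector $\mathbf{c}\in\mathbb{R}^{n+1}$. The key intermediate claim is that there are constants $C,C'>0$, independent of $y\in Y$, such that
\[
C^{-1}\|\mathbf{c}\|_{\mathbb{R}^{n+1}}\;\le\;\Bigl\|\sum_{i=0}^n c_i[e_i]_y\Bigr\|_y\;\le\;C'\|\mathbf{c}\|_{\mathbb{R}^{n+1}}
\qquad\text{for all } y\in Y,\ \mathbf{c}\in\mathbb{R}^{n+1}.
\]

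\textbf{Upper bound.} This is routine. Projecting $-\|e_i\|u\le e_i\le\|e_i\|u$ to the fiber gives $\|[e_i]_y\|_y\le\|e_i\|$. The triangle inequality then yields $C'=\sqrt{n+1}\,\max_i\|e_i\|$.

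\textbf{Lower bound.} This is the main obstacle. Axiom~(7) only controls $\sup_y$ of the fiber norms, not a single fiber. To localize it, I will reuse the bump-function trick from Item~(\textit{3}) of Proposition~\ref{prop:ns}.

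Since $\Psi$ is a continuous linear map, there is $C>0$ with $\max_i\|c_i\|_\infty\le C\|a\|$ for all $a\in A$. Fix $y\in Y$ and $\mathbf{c}$, and let $a_{\mathbf{c}}=\sum_i c_ie_i$ be the element with constant coefficients. By Proposition~\ref{prop:normc}, $z\mapsto\|[a_{\mathbf{c}}]_z\|_z$ is continuous. Hence for every $\varepsilon>0$ there is a neighborhood $U$ of $y$ on which $\|[a_{\mathbf{c}}]_z\|_z\le\|[a_{\mathbf{c}}]_y\|_y+\varepsilon$.

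Choose $f\in\mathcal{C}(Y)$ with $0\le f\le1$, $f(y)=1$ and $\operatorname{supp}f\subseteq U$. By \eqref{eq:fiber-module-action} and Item~(\textit{1}) of Proposition~\ref{prop:ns},
\[
\|f\cdot a_{\mathbf{c}}\|=\sup_z f(z)\,\|[a_{\mathbf{c}}]_z\|_z\le\|[a_{\mathbf{c}}]_y\|_y+\varepsilon .
\]
The coefficients of $f\cdot a_{\mathbf{c}}$ are $fc_i$, so
\[
|c_i|=|f(y)c_i|\le\|fc_i\|_\infty\le C\bigl(\|[a_{\mathbf{c}}]_y\|_y+\varepsilon\bigr).
\]
Letting $\varepsilon\to0$ gives the lower bound, after absorbing a factor $\sqrt{n+1}$ into $C$.

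\textbf{($\Rightarrow$)} Suppose $[a_k]_{y_k}\to[a_0]_{y_0}$. Then there is a section $a=\sum_i d_ie_i$ with $d_i\in\mathcal{C}(Y)$ such that $[a]_{y_0}=[a_0]_{y_0}$, i.e.\ $\mathbf{d}(y_0)=\mathbf{c}^{(0)}$, and $\|[a_k]_{y_k}-[a]_{y_k}\|_{y_k}\to0$. By Remark~\ref{re:conv}, the coefficient vector of $[a_k]_{y_k}-[a]_{y_k}$ is $\mathbf{c}^{(k)}-\mathbf{d}(y_k)$. The uniform lower bound therefore gives $\|\mathbf{c}^{(k)}-\mathbf{d}(y_k)\|\to0$. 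Continuity of the $d_i$ gives $\mathbf{d}(y_k)\to\mathbf{c}^{(0)}$, hence $\mathbf{c}^{(k)}\to\mathbf{c}^{(0)}$.

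\textbf{($\Leftarrow$)} Suppose $\mathbf{c}^{(k)}\to\mathbf{c}^{(0)}$. Take the constant-coefficient section $a=\sum_i c^{(0)}_ie_i$, which satisfies $[a]_{y_0}=[a_0]_{y_0}$. The coefficient vector of $[a_k]_{y_k}-[a]_{y_k}$ is $\mathbf{c}^{(k)}-\mathbf{c}^{(0)}$, so the upper bound gives $\|[a_k]_{y_k}-[a]_{y_k}\|_{y_k}\le C'\|\mathbf{c}^{(k)}-\mathbf{c}^{(0)}\|\to0$. This is exactly convergence in the sense of Definition~\ref{def:fiber_conv}.
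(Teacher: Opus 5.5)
Your proof is correct, but it gets the key uniform lower bound in a different way from the paper. The paper proves $\|\sum_i v_i[e_i]_y\|_y\ge m\|\mathbf{v}\|$ by compactness. It shows that $F(y,\mathbf{u})=\|\sum_i u_i[e_i]_y\|_y$ is jointly continuous on $Y\times\mathcal{S}^n$ (via Proposition~\ref{prop:normc}). It is strictly positive because $\{[e_i]_y\}$ is a basis, so it attains a positive minimum $m$. For the easy direction, the paper bounds $\|[e_i]_y\|_y$ by continuity on compact $Y$; your estimate $\|[e_i]_y\|_y\le\|e_i\|$ is more direct. For the lower bound, you instead use the boundedness of the coefficient map $\Psi$ in Axiom~(7), localized to a single fiber with the bump-function device from Item~(\textit{3}) of Proposition~\ref{prop:ns}. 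In effect you upgrade that qualitative vanishing criterion to a quantitative estimate with an explicit constant, essentially $\sqrt{n+1}\,\|\Psi\|$. Your argument only needs upper semicontinuity of $z\mapsto\|[a_{\mathbf{c}}]_z\|_z$, which is Item~(\textit{2}) of Proposition~\ref{prop:ns}, so you can drop the appeal to Proposition~\ref{prop:normc}. Your proof then never touches Lemma~\ref{le:closed} or the UHC Axiom~(6), which shows that this proposition does not depend on Axiom~(6) at all. The paper's route relies on the lower-semicontinuity half of Proposition~\ref{prop:normc}, and hence on Lemma~\ref{le:closed} and Axiom~(6). In exchange, it is a standard compactness principle that works for any continuously varying family of norms relative to a continuous frame over a compact base, and it needs no quantitative bound on $\Psi$.
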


\begin{proof} To ease the notation we write $[a_k]_{y_k}= a_k.$
To see	($\Leftarrow$),
	assume $c_i^{(k)} \to c_i^{(0)}$ for each $i$.
	We want to show that $\|a_k - [a]_{y_k}\|_{y_k} \to 0$ for an element $a \in A$ satisfying $[a]_{y_0}=a_0.$
	Let
	\[ a = \sum_{i=0}^n c_i^{(0)} e_i. \]
	Then $a \in A,$ we have $[a]_{y_0}=a_0$ and
	\begin{align*}
		\| a_k -  [a]_{y_k} \|_{y_k} &= \left\| \sum_{i=0}^n c_i^{(k)} [e_i]_{y_k} - \sum_{i=0}^n c_i^{(0)} [e_i]_{y_k} \right\|_{y_k} \\
		&= \left\| \sum_{i=0}^n (c_i^{(k)} - c_i^{(0)}) [e_i]_{y_k} \right\|_{y_k} \\
		&\leq \sum_{i=0}^n |c_i^{(k)} - c_i^{(0)}| \cdot \| [e_i]_{y_k} \|_{y_k}.
	\end{align*}
	By Proposition \ref{prop:normc}, the function $y \mapsto \|[e_i]_y\|_y$ is continuous on the compact set $Y$, so it is bounded by some constant $M$. Hence
	\[ \| a_k -  [a]_{y_k} \|_{y_k} \leq M \sum_{i=0}^n |c_i^{(k)} - c_i^{(0)}|. \]
	Since $\mathbf{c}^{(k)} \to \mathbf{c}^{(0)}$, the above implies $a_k \to a_0$.
	
	For ($\Rightarrow$),
	assume $a_k \to a_0$. By definition, there exists an element $a \in A$ such that $[a]_{y_0} = a_0$ and $\|a_k -  [a]_{y_k}\|_{y_k} \to 0$.
	Let $\mathbf{d}(y) = (d_0(y), \dots, d_n(y))$ be the vector of coefficient functions pertaining to $a$. By Axiom (7), the functions $d_i: Y \to \mathbb{R}$ are continuous, so $y_k \to y_0$ implies $d_i(y_k) \to d_i(y_0) = c_i^{(0)}$.
	
Write
	\[ a_k -  [a]_{y_k} = \sum_{i=0}^n (c_i^{(k)} - d_i(y_k)) [e_i]_{y_k} \]
	and let $\delta^{(k)} \in \mathbb{R}^{n+1}$ be the vector with $\delta_i^{(k)} = c_i^{(k)} - d_i(y_k)$. By assumption,
	 $$\Big\|\sum_i \delta_i^{(k)} [e_i]_{y_k}\Big\|_{y_k} \to 0$$   and we need to show that $\delta_i^{(k)} \to 0.$

	Consider the function $F: Y \times \mathcal{S}^n \to \mathbb{R}$ defined on the product of $Y$ and the unit sphere $\mathcal{S}^n \subset \mathbb{R}^{n+1}$ by
	\[
	F(y, \mathbf{u}) = \bigg\| \sum_{i=0}^n u_i [e_i]_y \bigg\|_y.
	\]
	%For a fixed $\mathbf{u} \in \mathbb{S}^n$, we have $a_{\mathbf{u}} = \sum u_i e_i \in A.$  By Proposition \ref{prop:normc}, the map $y \mapsto \|[a_{\mathbf{u}}]_y\|_{A_y}$ is continuous. Since the norm is also continuous in $\mathbf{u}$, 
	We claim that $F$ is jointly continuous on $Y \times \mathcal{S}^n$. 
In fact, consider
	\[
	|F(y, \mathbf{u}) - F(y_0, \mathbf{u}_0)| \leq |F(y, \mathbf{u}) - F(y, \mathbf{u}_0)| + |F(y, \mathbf{u}_0) - F(y_0, \mathbf{u}_0)|.
	\]
	For a fixed $\mathbf{u}_0 \in \mathcal{S}^n$, let $a = \sum (u_0)_i e_i$. By Proposition \ref{prop:normc}, the map $y \mapsto \|[a]_y\|_y$ is continuous. Thus, as $y \to y_0$, the second term vanishes.
	Next, note that
		\[
		|F(y, \mathbf{u}) - F(y, \mathbf{u}_0)| \leq \left\| \sum (u_i - (u_0)_i) [e_i]_y \right\|_y \leq \sum |u_i - (u_0)_i| \cdot \|[e_i]_y\|_y
		\]
		Since $Y$ is compact and the map $y \mapsto \|[e_i]_y\|$ is continuous by Proposition \ref{prop:normc}, the norms of the basis vectors are bounded globally. Let $M = \max_{i} \sup_{y \in Y} \|[e_i]_y\| < \infty$. Then
		\[
		|F(y, \mathbf{u}) - F(y, \mathbf{u}_0)| \leq M \cdot \|\mathbf{u} - \mathbf{u}_0\|_1.
		\]
	Since $F$ is continuous in $y$ and uniformly continuous in $\mathbf{u}$, it is jointly continuous. 
	
	Now, $\{[e_i]_y\}_i$ is a basis for every $y$, so the vector $\sum u_i [e_i]_y$ is  nonzero for $\mathbf{u} \in \mathcal{S}^n$. So $F(y, \mathbf{u}) > 0$ on the compact space $Y \times \mathcal{S}^n,$ thus  $F$ attains a strictly positive minimum $m > 0$.
	Therefore, for any $y \in Y$ and $\mathbf{v} \in \mathbb{R}^{n+1} \setminus \{0\}$,
	\[
	\left\| \sum_{i=0}^n v_i [e_i]_y \right\|_y 
	= \|\mathbf{v}\| \cdot F\left(y, \frac{\mathbf{v}}{\|\mathbf{v}\|}\right) 
	\geq m \|\mathbf{v}\|.
	\]
	Applying this to $\mathbf{v}=\delta^{(k)}$ yields
	\[
	\|\delta^{(k)}\| \leq \frac{1}{m} \left\| \sum_{i=0}^n \delta_i^{(k)} [e_i]_{y_k} \right\|_{y_k}.
	\]
	Since the term on the right converges to 0, we conclude that $\|\delta^{(k)}\| \to 0$.
\end{proof}

We have now acquired the tools to verify that our motivating example fits the abstract framework. That is, we show that the space $\Caff$
associated with a compact regular partially convex set $K$
satisfies the axioms of a free order unit module.

\begin{proposition}\label{prop:axiomok}
		If $K \subseteq \R^{n+m}$ is a compact regular partially convex set in $x$, then $\Caff$ is a free order unit module of type $(n,m)$ over $\cC(Y),$ where $Y = \pi_2(K).$ 
\end{proposition}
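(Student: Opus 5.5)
Our plan is to verify Axioms (1)--(7) of Definition~\ref{def:module} in turn for $A=\Caff$ with $u=1$, using Theorem~\ref{thm:ctsXY} and Lemma~\ref{lem:coeff_recovery} as the main tools.

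\textbf{Axioms (1)--(4).} Axiom (1) is standard. $\Caff$ is a unital, closed, self-adjoint subspace of $\cC(K)$, hence an AOU space with the pointwise cone and the constant function $1$ as order unit. Since $K$ is compact, $K$ is convex-slice compact, and the Kadison embedding is realized by evaluation at points of $K$. Thus multiplication in the sense of Remark~\ref{rem:kadison_mult} agrees with pointwise multiplication on $K$; we identify $A$ with its image and use this to compute $\cM_A$.

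For Axiom (2) we claim $\cM_A=\{g\circ\pi_2 : g\in\cC(Y)\}$. The inclusion $\supseteq$ is clear, since multiplying by a function of $y$ preserves partial affinity. For $\subseteq$, take $z\in\cM_A$ and write $z=c_0(y)+\sum c_i(y)x_i$. Then $z\cdot x_j\in A$ forces $c_i(y)x_ix_j$ to be affine in $x$ on each $K_y$. Because $\operatorname{int}K_y\neq\emptyset$, the quadratic form $x\mapsto \sum_i c_i(y)x_ix_j$ must vanish identically, so $c_i\equiv 0$ for $i\ge1$. The sup norm on $\cM_A$ is the sup norm of $g$ on $Y=\pi_2(K)$, so $\cM_A\cong\cC(Y)$ isometrically, and it is generated by the coordinates $y_1,\dots,y_m$ by Stone--Weierstrass. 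Moreover the Gelfand embedding recovers $Y\subseteq\R^m$.

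For Axioms (3) and (4) we take the basis $e_0=1$ and $e_i=x_i$. Existence and uniqueness of the representation $a=\sum c_ie_i$ is exactly Theorem~\ref{thm:ctsXY}.

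\textbf{Axioms (5) and (7).} For Axiom (7), $\Psi$ is continuous by the bound $\max_i\|c_i\|\le C\|f\|_\infty$ in Lemma~\ref{lem:coeff_recovery}. Its inverse is continuous by the elementary estimate $\|f\|_\infty\le \sum_i\|c_i\|_\infty\sup_K(1+|x_i|)$.

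For Axiom (5), the plan is to identify $J_y$ with $\{f\in A: f|_{K_y}=0\}$, i.e.\ $c(y)=0$, which is equivalent by full-dimensionality of $K_y$. One inclusion holds because elements of $I_y$ and everything order-dominated by their absolute values vanish on $K_y$. For the other inclusion, if $c(y_0)=0$ then $|f|\le \big(\sum_i|c_i|\,\sup_K|x_i|\big)\cdot 1 =: h$. Here $h\in\cM_A$ is a positive element vanishing at $y_0$, and it lies in the closed order ideal generated by $I_y$. Then $A_y\cong\Aff(K_y)$ via restriction, which has dimension $n+1$. We then need the quotient cone to equal the cone of functions nonnegative on $K_y$. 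The inclusion $\subseteq$ is immediate. For $\supseteq$, given $\varphi\ge0$ on $K_y$, we extend it by constant coefficients $c_i(\cdot)\equiv c_i(y)$ and add $M\|\cdot-y\|^2$-type corrections in $\cC(Y)$, in the spirit of Proposition~\ref{prop:sep2}. Concretely we set $f=\varphi+\mu(y')$ with $\mu(y')=\max(0,-\min_{x\in K_{y'}}\varphi(x))$. Continuity of $\mu$ requires lower semicontinuity of the min; we get it from upper hemicontinuity of $y\mapsto K_y$ (closedness and compactness of $K$) together with Berge's theorem. Since only lower semicontinuity of $\min$ is needed for $\mu$ to be... in fact both are needed, so we use LHC of $y\mapsto K_y$ from regularity (Remark~\ref{rem:lhc}), which gives continuity. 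The pointwise determination of $A^+$ is then trivial, since $f\ge 0$ on $K$ iff $f\ge0$ on each $K_y$.

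\textbf{Axiom (6), the main obstacle.} Let $v_k\in A_{y_k}^+$ with bounded fiber norms. Identify $v_k$ with coefficient vectors $\mathbf c^{(k)}\in\R^{n+1}$. The fiber norm is $\sup_{K_{y_k}}|\cdot|$, and a uniform lower bound on $\|\cdot\|_{y}$ relative to the coefficient norm follows from the constant $C$ of Lemma~\ref{lem:coeff_recovery}, applied fiberwise via the local matrices $M(y)^{-1}$. Hence $\mathbf c^{(k)}$ is bounded, and we extract a subsequence $\mathbf c^{(k_j)}\to\mathbf c^{(0)}$. The limit function $\sum c^{(0)}_i x_i$ is nonnegative on $K_{y_0}$, because every point of $K_{y_0}$ is a limit of points of $K_{y_{k_j}}$ by LHC. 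Taking the constant-coefficient section $a=\sum c^{(0)}_ie_i$, the difference $v_{k_j}-[a]_{y_{k_j}}$ has coefficients tending to $0$. Its fiber norm is at most $|\delta|\sup_K(1+|x|)\to0$, which gives Axiom (6).
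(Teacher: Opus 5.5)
Your proposal is correct and follows the paper's own route. You do the same axiom-by-axiom verification, using Theorem~\ref{thm:ctsXY} for freeness, Lemma~\ref{lem:coeff_recovery} for Axiom~(7), the constant-coefficient lift plus the Berge-continuous correction $\max(0,-m)$ for the quotient cone, and LHC for positivity of the limit in Axiom~(6).

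Where you deviate, you tighten the paper rather than change strategy:
\begin{enumerate}
\item The domination $-h\le f\le h$ with $h\in I_{y_0}$ actually proves $\ker\rho_{y_0}\subseteq J_{y_0}$, which the paper leaves implicit.
\item In Axiom~(6) you bound the coefficient vectors by the fiberwise inequality $\|c\|\le C\sup_{K_y}|\cdot|$. The paper instead transports $n+1$ affinely independent interior points of $K_{y_0}$ to $K_{y_k}$ by LHC. Note that your inequality is the intermediate display inside the proof of Lemma~\ref{lem:coeff_recovery}. The lemma's statement only bounds by $\|f\|_\infty$, which would not suffice for the constant-coefficient lift, so cite the intermediate inequality.
\item You estimate $\|v_{k_j}-[a]_{y_{k_j}}\|_{y_{k_j}}$ directly instead of invoking Proposition~\ref{prop:conv}. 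This avoids the paper's slightly circular appeal to that proposition, whose proof runs through Proposition~\ref{prop:normc} and Lemma~\ref{le:closed} and hence already uses Axiom~(6).
\end{enumerate}

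You should drop the claim that multiplication in $\cC(S(A))$ agrees with pointwise multiplication on $K$. A product of two affine functions on the convex set $S(A)$ is affine only when one factor is constant. Read literally, the multiplicative domain of Remark~\ref{rem:kadison_mult} would therefore be $\R 1$. The paper's proof silently computes $\cM_A$ with pointwise products in $\cC(K)$. State that as the operative convention rather than deriving it from the Kadison embedding.
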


\begin{proof}
	Let $A = \mathcal{C}_{\mathrm{aff}}(K)$ and let $\mathcal{M}_A = \mathcal{C}(Y)$.
	
	 \textit{AOU space structure:} 
		It is straightforward to check that $A$ is an Archimedean order unit space when equipped with the standard pointwise ordering and the constant function $e(x,y) = 1$ as the Archimedean order unit.

		\textit{Module Structure:} 
			%The multiplicative domain $\cM_A$ of $A$ is precisely the space $\cC(Y)$ embedded in $\cC(K)$ as functions constant in $x$. For any $g \in \mathcal{C}(Y)$ and $f \in A$, we have $fg \in A,$ making $A$ a module over $\mathcal{C}(Y)$.
			The multiplicative domain $\mathcal{M}_A$ consists of all $g \in A = C_{\mathrm{aff}}(K)$ such that $g \cdot f \in A$ for all $f \in A$. Since $\operatorname{int}(K_y) \neq \emptyset$ for all $y \in Y$, a function whose product with every coordinate function $x_i$ remains affine on $K_y$ must be constant in $x$ on each slice. Thus, $\mathcal{M}_A \cong C(Y)$ embedded in $C(K)$ as functions depending only on $y$. Furthermore, the $m$ coordinate projection functions $y \mapsto y_j$ ($j = 1, \dots, m$) separate points on the compact set $Y \subseteq \mathbb{R}^m$, so by the Stone--Weierstrass theorem they generate $\mathcal{M}_A \cong C(Y)$ as a unital commutative $C^*$-algebra on $m$ generators.
		%	Let $g \in C(Y)$ and $f \in \Caff(K)$. Since $f$ is affine in $x$ for fixed $y$, the product $g(y)f(x,y)$ remains affine in $x$ (it is simply a scalar multiplication of the affine function on the slice $K_y$). Since the product of continuous functions is continuous, $g \cdot f \in \Caff(K)$.
		%Thus, $C(Y) \subseteq \cM_A$, making $A$ a module over $C(Y)$.
%		For any $g \in \mathcal{C}(Y)$ and $f \in A$, define the product $(g \cdot f)(x, y) = g(y)f(x, y)$. Since $f$ is affine in $x$ for fixed $y$, and $g(y)$ is constant with respect to $x$, the product $g(y)f(x, y)$ remains affine in $x$. Since both functions are continuous, their product is continuous. Thus, $g \cdot f \in \mathcal{C}_{\mathrm{aff}}(K)$, making $A$ a module over $\mathcal{C}(Y)$.
		
	\textit{Finitely Generated and Free:} 
	 Since $K$ is assumed regular, Theorem \ref{thm:ctsXY} asserts that any $f \in \Caff$ admits a unique representation
		\[
		f(x,y) = c_0(y) + \sum_{i=1}^n c_i(y)x_i,
		\]
		where $c_i: Y \to \R$ are continuous functions, i.e., $c_i \in \cM_A$. The basis elements of the free module $A$ are the constant function (and order unit) $e_0 = 1$ and the coordinate functions $e_i = x_i$ for $i=1,\dots,n$.
		Thus, $A$ is a free module of rank $n+1$.
		
	 \textit{Unit Alignment:} 
		The order unit $u$ of $A$ is the constant function $1 = 1_K,$ and the unit of the $C^*$-algebra $\mathcal{C}(Y)$ is the constant function $1_Y$. The first basis vector is $e_0 = 1$. Therefore, $u = 1_Y \cdot e_0$, satisfying the alignment axiom.
		
		 \textit{Fiber-wise Order:} 
			We claim that for any $y \in Y$, the fiber space $A_y = A / J_y$ is isomorphic to $\mathrm{Aff}(K_y)$, the space of continuous affine functions on the slice $K_y$.
		To see this, fix $y_0 \in Y$ and let $\rho_{y_0}: A \to \mathrm{Aff}(K_{y_0})$ be the evaluation map defined by restricting a function $f \in \Caff$ to the slice at $y_0,$
			\[
			\rho_{y_0}(f)(x) = f(x, y_0).
			\]
			This map is clearly well-defined and surjective.
%			\textbf{Surjectivity:} Let $h \in \mathrm{Aff}(K_{y_0})$. Since $h$ is an affine function on a subset of $\mathbb{R}^n$, it can be written as $h(x) = \alpha_0 + \sum_{i=1}^n \alpha_i x_i$ for scalars $\alpha_i \in \mathbb{R}$. We can lift this to a global function $F \in \mathcal{C}_{\mathrm{aff}}(K)$ by setting the coefficient functions to be constants: $c_i(y) = \alpha_i$ for all $y \in Y$. By Theorem 3.5, $F(x,y) = \sum \alpha_i e_i(x,y)$ is in $\mathcal{C}_{\mathrm{aff}}(K)$. Clearly, $\rho_{y_0}(F) = h$.
				 The kernel of the evaluation map is
				\[
				\ker(\rho_{y_0}) = \{ f \in A \mid f(x, y_0) = 0\  \forall x \in K_{y_0} \}.
				\]
				By Theorem \ref{thm:ctsXY}, any $f \in A$ is uniquely represented as $f(x, y) = \sum_{i=0}^n c_i(y)x_i.$ If $f \in \ker(\rho_{y_0})$, then $\sum_{i=0}^n c_i(y_0)x_i = 0$ for all $x \in K_{y_0}$.
				
				Since $K$ is regular, we have $\mathrm{int}(K_{y_0}) \neq \emptyset,$ which implies that the coordinate functions $\{1, x_1, \dots, x_n\}$ are linearly independent on $K_{y_0}$. So $f \in \ker(\rho_{y_0})$  if and only if  $c_i(y_0) = 0$ for all $i$.
				This implies that $c_i \in I_{y_0} = \{ g \in \mathcal{C}(Y) \mid g(y_0) = 0 \}$. Since $J_{y_0}$ is closed order ideal generated by $I_{y_0}$, we have $f \in J_{y_0}$. On the other hand, we clearly have $J_{y_0}\subseteq \ker(\rho_{y_0}).$
				Thus $\ker(\rho_{y_0}) = J_{y_0}$ and we deduce that the induced map $\tilde{\rho}_{y_0}: A_{y_0} = A / J_{y_0} \to \mathrm{Aff}(K_{y_0})$ is a linear bijection.
				
				We now prove that $A_{y_0}$ and $\mathrm{Aff}(K_{y_0})$ are isomorphic as AOU spaces,
				meaning that $\tilde{\rho}_{y_0}$ preserves the order unit and positivity in both directions.
				
				 The order unit $u$ of $A$ satisfies
				\[
				\tilde{\rho}_{y_0}([u]_{y_0})(x) = u(x, y_0) = 1, \quad \forall x \in K_{y_0}.
				\]
				The latter is precisely the order unit of $\operatorname{Aff}(K_{y_0})$.

				It remains to show that 
				$$[f]_{y_0} \in (A_{y_0})^+ \iff \tilde{\rho}_{y_0}([f]_{y_0}) \ge 0 \text{ on } K_{y_0}.$$ 
				
				To see $(\Rightarrow),$ 
				let $[f]_{y_0} \in (A_{y_0})^+$. By the definition of the quotient order, there exists a representative $g \in A$ such that $[g]_{y_0} = [f]_{y_0}$ and $g \in A^+$. Since $g \in A^+$, $g(x, y) \ge 0$ for all $(x, y) \in K$. Specifically, restricting to the slice $y_0$, we have $g(x, y_0) \ge 0$ for all $x \in K_{y_0}$. Since $\tilde{\rho}_{y_0}([f]_{y_0})$ is given by evaluating any representative at $y_0$, the image is a non-negative function.
				
				For $(\Leftarrow)$ 
				let $h \in \operatorname{Aff}(K_{y_0})$ be such that $h(x) \ge 0$ for all $x \in K_{y_0}$. We must show that there exists a global function $G \in A^+$ such that $G(\cdot, y_0) = h$. 
				%We separate two cases.
				
				%\textit{Case 1:} 
				%Assume $h(x) > 0$ for all $x \in K_{y_0}$.
				By regularity of $K,$  we can uniquely represent $h$ as $h(x) = c_0 + \sum_{i=1}^n c_i x_i$.
				Let $F \in A$ be the trivial lift of $h$ obtained by extending the coefficients constantly over $Y,$
				\[
				F(x, y) = c_0 + \sum_{i=1}^n c_i x_i.
				\]
				Then $F(\cdot, y_0) = h$. Now let $m: Y \to \mathbb{R}$ be the minimum value function defined by
				\[
				m(y) = \min_{x \in K_y} F(x, y).
				\]
				Since $K$ is regular, the map $y \mapsto K_y$ is LHC. Since $K$ is compact, this map is also upper hemicontinuous (if a set-valued map has closed graph and the codomain is compact, the map is UHC \cite[Theorem 17.11]{AB06}). Since the function $F$ is continuous, Berge's maximum theorem \cite[Theorem 17.31]{AB06} implies that the value function $m(y)$ is continuous on $Y$.
				
				Note that if $h > 0$ on the compact set $K_{y_0}$, then $m(y_0) > 0.$ On the other hand, if $h \geq 0$ and $h$ has a zero, then $m(y_0)=0.$  Define a correction term $\delta \in \mathcal{C}(Y)$ by
				\[
				\delta(y) = \max(0, -m(y)).
				\]
				Note that $\delta(y_0) = 0$ since $m(y_0) \geq 0.$ Thus, $\delta$ belongs to $I_{y_0}.$
				Now define $G \in A$ by
				\[
				G(x, y) = F(x, y) + \delta(y).
				\]
				For any $(x, y) \in K$, we have $G(x, y) \ge m(y) + \max(0, -m(y)) \ge 0$. Thus $G \in A^+.$ Moreover,
				\[
				G(x, y_0) = F(x, y_0) + \delta(y_0) = h(x) + 0 = h(x).
				\]
%				\textit{Case 2:} Assume $h(x) \ge 0$.
%				For any $\epsilon > 0$, let $h_\epsilon = h + \epsilon$. Then $h_\epsilon > 0$ on $K_{y_0}$. By Case 1, $h_\epsilon$ lifts to a positive class $[G_\epsilon]_{y_0} \in (A_{y_0})^+$.
%				Since AOU spaces have closed positive cones, and $\lim_{\epsilon \to 0} h_\epsilon = h$ in the norm topology, the class corresponding to $h$ must also be in $(A_{y_0})^+$.
			We deduce that $\tilde{\rho}_{y_0}$ is an order isomorphism between $A_{y_0}$ and $\mathrm{Aff}(K_{y_0}).$

		 An element $f \in A$ is positive ($f \ge 0$) if and only if $f(z) \ge 0$ for all $z \in K$. This is equivalent to verifying that for every $y \in Y$, the restriction $f(\cdot, y)$ is non-negative on $K_y$. This implies $A^+ = \{ f \in A \mid \forall y \in Y, [f]_y \in (A_y)^+ \}$.

		\textit{Continuity Condition:} 
		We must show that the map $y \mapsto A_y^+ \cap \{a \mid \|a\| \le r\}$ is UHC for any $r\geq 0.$ Let $y_k \to y_0$ in $Y$ and let $a_k \in A_{y_k}^+$ be a sequence such that $\|a_k\|_{y_k} \le r$ for some $r > 0$. We need to show that there exists a subsequence of $(a_k)_k$ converging to some $a_0 \in A_{y_0}^+$.
		
		Since $A$ is a free module of rank $n+1$, each $a_k$ has a unique representation
		\[
		a_k = \sum_{i=0}^n c_i^{(k)} [e_i]_{y_k},
		\]
		where $\mathbf{c}^{(k)} = (c_0^{(k)}, \dots, c_n^{(k)}) \in \mathbb{R}^{n+1}$. By Proposition \ref{prop:conv}, the convergence of elements in the fiber bundle is equivalent to the convergence of their coefficient vectors. Thus, it suffices to show that the sequence $\mathbf{c}^{(k)}$ is bounded in $\mathbb{R}^{n+1}$.
		
		Since $K$ is a regular partially convex set, we have $\text{int}(K_{y_0}) \neq \emptyset$ and there are $n+1$ affinely independent points $v_0, \dots, v_n \in \text{int}(K_{y_0})$. Since the set-valued map $y \mapsto K_y$ is LHC, for each $j,$ there exists a sequence $v_j^{(k)} \in K_{y_k}$ such that $v_j^{(k)} \to v_j$ as $k \to \infty$.
		
		For $k \in \mathbb{N}$ let $M_k \in M_{n+1}(\mathbb{R})$ be the matrix with $j$-th row $[1, (v_j^{(k)})^T].$ Hence $M_k \to M(y_0)$ as $k \to \infty,$ where $M(y_0)$ is the matrix determined by the affinely independent points $v_0, \dots, v_n$. Therefore, $\det(M(y_0)) \neq 0.$ By continuity of the determinant, $M_k$ is invertible for sufficiently large $k$, and the sequence of inverses $\|M_k^{-1}\|$ is bounded.
		
		Let $\mathbf{F}^{(k)} \in \mathbb{R}^{n+1}$ be the vector with components $F_j^{(k)} = a_k(v_j^{(k)})$. Since $a_k \in A_{y_k}$ corresponds to an affine function on $K_{y_k}$ and $\|a_k\|_{y_k} \le r$, we have
		\[
		|F_j^{(k)}| = |a_k(v_j^{(k)})| \le \sup_{x \in K_{y_k}} |a_k(x)| = \|a_k\|_{y_k} \le r.
		\]
		Thus, the sequence $\mathbf{F}^{(k)}$ is bounded. The coefficient vectors satisfy the linear system $M_k \mathbf{c}^{(k)} = \mathbf{F}^{(k)}$. Therefore,
		\[
		\|\mathbf{c}^{(k)}\| \le \|M_k^{-1}\| \|\mathbf{F}^{(k)}\| \le C
		\]
		for some constant $C$. So there exists a subsequence (still denoted by indices $k$) such that $\mathbf{c}^{(k)} \to \mathbf{c}^{(0)}$ for some $\mathbf{c}^{(0)} \in \mathbb{R}^{n+1}$.
		
		Let $a_0 = \sum_{i=0}^n c_i^{(0)} [e_i]_{y_0}$. By Proposition \ref{prop:conv}, we have $a_k \to a_0$ in the fiber bundle. It remains to show $a_0 \in A_{y_0}^+$. For any $x \in K_{y_0}$, by the LHC property of $K$, there exists a sequence $x_k \in K_{y_k}$ such that $x_k \to x$. Since $a_k \in A_{y_k}^+$, we have $a_k(x_k) \ge 0$. Passing to the limit, we obtain
		\[
		a_0(x) = \lim_{k \to \infty} a_k(x_k) \ge 0.
		\]
		Since $x \in K_{y_0}$ was arbitrary, $a_0$ is non-negative on $K_{y_0}$, implying $a_0 \in A_{y_0}^+$. Thus, the map is UHC on bounded slices.

%			 \textit{Topological Regularity:} 
%		We show that the algebraic isomorphism $\Psi: f \mapsto (c_0, \dots, c_n)$ is a homeomorphism between $A$ and $\mathcal{C}(Y)^{n+1}$.
%		
%		Indeed, since $K$ is compact, the coordinate functions $x_i$ are bounded. If the coefficients $c_i(y)$ converge uniformly to $0$, then $f = \sum c_i e_i$ converges uniformly to $0$.
%		
%		On the other hand, as in the proof of Theorem \ref{thm:ctsXY}, the coefficients of $f \in A$ are recovered via $\mathbf{c}(y) = M(y)^{-1}\mathbf{F}(y)$, where $M(y)$ is a matrix of basis evaluations at affinely independent points in $K_y$. Since $\mathrm{int}(K_y) \neq \emptyset$, we have $\det(M(y)) \neq 0$. By the compactness of $Y$ and the continuity of the determinant, $\|M(y)^{-1}\|$ is uniformly bounded. Therefore, if $f$ converges uniformly to $0,$ the coefficient vector $\mathbf{c}$ converges uniformly to $0.$
		
		\textit{Topological Regularity:} We show that the linear isomorphism $\Psi : f \mapsto (c_0, \dots, c_n)$ is a homeomorphism between $A$ and $C(Y)^{n+1}$.
		Indeed, since $K$ is compact, the coordinate functions $x_i$ are bounded on $K$. Thus, if $\sup_{y \in Y} |c_i^{(k)}(y) - c_i(y)| \to 0$ for all $i$, then $f^{(k)} = \sum_{i=0}^n c_i^{(k)} e_i$ converges uniformly to $f$ in $A$.
		Conversely, by Lemma~\ref{lem:coeff_recovery}, there exists a constant $C > 0$ such that $\max_{0 \le i \le n} \|c_i\|_{C(Y)} \le C \|f\|_\infty$. Therefore, if $f^{(k)} \to 0$ uniformly in $A$, each coefficient function $c_i^{(k)}$ converges uniformly to $0$ in $C(Y)$.  
\end{proof}

\section{Fiber-Preserving States}\label{sec:state}

In this section, we define the partially convex state space associated with a free order unit module and prove that it provides the canonical example of a compact regular partially convex set.

\begin{definition} Let $A$ be a free order unit module with multiplicative domain $\cM_A\cong \cC(Y).$
	For a given point $y_0 \in Y$, a \textbf{$y_0$-state} on $A$ is a state $\Phi: A \to \R$ that is $y_0$-linear in the sense that for any $g \in \cC(Y)$ and any $f \in A$,
	\[ \Phi(g \cdot f) = g(y_0) \Phi(f). \]
	
\end{definition}
The last condition ($y_0$-linearity) forces the state $\Phi$ to be ``pinned" to the point $y_0$ in the base space $Y$.

\begin{definition} \label{def:pstate} Let $A$ be a free order unit module with multiplicative domain $\cM_A\cong \cC(Y).$
	For each $y \in Y$, let $S_y(A)$ be the set of all $y$-states on $A$. The \textbf{partially convex state space} of $A$ is 
	the union of the fibers $S_y(A)$ for all $y \in Y,$  
	\[ S_{\mathrm{par}}(A) = \bigcup_{y \in Y} (S_y(A) \times \{y\}) \subseteq A^* \times Y. \]
\end{definition}

The next lemma gives an identification of the partially convex state space $S_{\mathrm{par}}(A)$ with a  bundle of the state spaces of the fibers $A_y$.

\begin{lemma}\label{le:Sy} Let $A$ be a free order unit module.
	Then the set $S_y(A)$ of $y$-states on $A$ is canonically affinely homeomorphic to the state space $S(A_y)$ of the function system $A_y$. 
\end{lemma}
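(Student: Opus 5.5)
The natural map is given by composing with the quotient. Let $q_y : A \to A_y = A/J_y$ be the quotient map. Given a state $\sigma \in S(A_y)$, I will set $\Phi_\sigma = \sigma \circ q_y$. Conversely, given a $y$-state $\Phi$, I will show that $\Phi$ vanishes on $J_y$, so that it factors as $\Phi = \sigma \circ q_y$ for a unique linear functional $\sigma$ on $A_y$. The proof then consists of checking that $\sigma \mapsto \sigma\circ q_y$ is a well-defined bijection $S(A_y)\to S_y(A)$ that is affine and a homeomorphism for the weak$^*$ topologies.

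\emph{Forward direction.} Let $\sigma\in S(A_y)$.
\begin{itemize}
\item \emph{Unital:} $\Phi_\sigma(u)=\sigma(u_y)=1$, since $[u]_y = u_y$ by Axiom~(5).
\item \emph{Positive:} $q_y(A^+) = A_y^+$ by the definition of the quotient cone, so $\Phi_\sigma \ge 0$ on $A^+$.
\item \emph{$y$-linear:} for $g\in\cC(Y)$ and $f\in A$, equation \eqref{eq:fiber-module-action} gives $[g\cdot f]_y = g(y)[f]_y$. Applying $\sigma$ yields $\Phi_\sigma(g\cdot f)=g(y)\Phi_\sigma(f)$.
\end{itemize}

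\emph{Backward direction.} Let $\Phi$ be a $y$-state. For $h\in I_y$ and $a\in A$, $y$-linearity gives $\Phi(h\cdot a)=h(y)\Phi(a)=0$; taking $a=u$ shows $\Phi(h)=0$. Next, $\ker\Phi$ is closed, since states on an AOU space are bounded. Moreover, $\{a : \Phi(|a|')=0 \text{ for all } |a|'\text{ dominating } a\}$ behaves like an order ideal. Concretely: if $-b\le a\le b$ with $b$ in the order ideal generated by $I_y$, then $|\Phi(a)|\le\Phi(b)$. Every positive element of that ideal is dominated by some $|h|$ with $h \in I_y$, and $|h|\in I_y$ because $I_y$ is a lattice ideal of $\cC(Y)$. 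Hence $\Phi(b)=0$, so $\Phi$ vanishes on the order ideal generated by $I_y$, and by continuity on its closure $J_y$. This step is where I expect the main care is needed: one must describe $J_y$ explicitly enough, as the closure of $\{a : -|h|\le a\le |h|,\ h\in I_y\}$ together with its span, to see that $\Phi$ kills it. The domination estimate from Remark~\ref{re:conv} is the model. Once this is done, $\sigma([a]_y):=\Phi(a)$ is well defined and unital. It is positive because every element of $A_y^+$ has a representative in $A^+$.

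\emph{Bijectivity and topology.} The two constructions are mutually inverse because $q_y$ is surjective, and both are evidently affine. $S(A_y)$ is compact, since $A_y$ is a finite-dimensional AOU space. $S_y(A)$ carries the weak$^*$ topology from $A^*$, which is Hausdorff. The map $\sigma\mapsto\sigma\circ q_y$ is weak$^*$ continuous: for each $a\in A$, the map $\sigma\mapsto\sigma([a]_y)$ is continuous. A continuous bijection from a compact space onto a Hausdorff space is a homeomorphism, which completes the proof.
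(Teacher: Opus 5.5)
Your proof is correct. It has the same skeleton as the paper's proof: states are pulled back along the quotient map $q_y\colon A\to A_y$, and $y$-linearity of $\sigma\circ q_y$ comes from \eqref{eq:fiber-module-action}. You differ in the key step, showing that a $y$-state kills $J_y$.

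\textbf{The paper's route.} It applies $y$-linearity to every product $g\cdot a$ with $g(y)=0$. It then asserts that $J_y$ is the closure of sums of such products. That assertion is true, but proving it needs the module basis and $\dim A_y=n+1$: by Remark~\ref{re:conv}, $a\in J_y$ gives $\sum_i c_i(y)[e_i]_y=0$, hence $c_i\in I_y$ and $a=\sum_i c_i e_i$. The paper does not spell this out.

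\textbf{Your route.} You use $y$-linearity only at $a=u$, to get $\Phi|_{I_y}=0$. You then finish with positivity and the definition of $J_y$ as a closed order ideal. This buys two things:
\begin{enumerate}
\item it is independent of the free-module structure;
\item it shows in passing that any state with $\Phi(g)=g(y)$ for all $g\in\mathcal{M}_A$ is automatically a $y$-state.
\end{enumerate}
You also prove the homeomorphism claim via the compact-to-Hausdorff argument, which the paper leaves implicit.

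\textbf{One tightening needed.} Replace the vague sentence about $\{a:\Phi(|a|')=0\ldots\}$ with the routine check that
\[
J_0=\{a\in A: -|h|\le a\le |h| \text{ for some } h\in I_y\}
\]
is exactly the order ideal generated by $I_y$. The checks are short:
\begin{enumerate}
\item $J_0$ is already a linear subspace, since dominating functions add; so no extra span is needed, contrary to what you suggest.
\item $J_0$ is an order ideal: if $0\le q\le p\le |h|$, then $-|h|\le q\le|h|$.
\item $J_0$ contains $I_y$.
\item Any order ideal $J\supseteq I_y$ contains $J_0$: from $0\le a+|h|\le 2|h|$ you get $a+|h|\in J$, hence $a\in J$.
\end{enumerate}
Also note that $-|h|\le h\le |h|$ holds in $A$, and not just in $\mathcal{C}(Y)$. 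This is because the order of $A$ restricted to $\mathcal{M}_A$ is the pointwise order in $\mathcal{C}(S(A))$ under the Kadison embedding, as used in Remark~\ref{re:conv}.

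With this in place, $|\Phi(a)|\le \Phi(|h|)=0$ for every $a\in J_0$. Continuity of $\Phi$ then gives $\Phi|_{J_y}=0$.
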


\begin{proof}
	Recall from Definition \ref{def:module} that $A_y = A / J_y$, where $J_y$ is the closed order ideal generated by elements $g \in \cM_A \cong \cC(Y)$ vanishing at $y$.
	
First, let $\Phi \in S_y(A)$. By the definition of a $y$-state, $\Phi$ is $y$-linear: $\Phi(g \cdot a) = g(y)\Phi(a)$. Note that any element of the form $g \cdot a$ with $g(y)=0$ and $a \in A$ satisfies
	\[
	\Phi(g \cdot a) = 0 \cdot \Phi(a) = 0.
	\]
	Since elements of $J_y$ are limits of elements of the form $\sum_{k=1}^m g_k a_k $ with $g_k(y)=0$ and $a_k\in A$ and since $\Phi$ is continuous, 
	we have that $\Phi(f)=0$ for all $f \in J_y.$
	Since $\Phi$ vanishes on $J_y$, it descends to a well-defined functional $\tilde{\Phi}$ on the quotient $A_y$, defined by $\tilde{\Phi}([a]_y) = \Phi(a)$. Since $\Phi$ is a positive unital map, $\tilde{\Phi}$ is a state on $A_y$.
	
	Conversely, let $\psi \in S(A_y)$ be a state on the fiber $A_y.$ Define $\Phi : A \to \mathbb{R}$ by $\Phi(a) = \psi([a]_y)$. By Remark \ref{re:conv}, the module action on the fiber satisfies $[g \cdot a]_y = g(y)[a]_y.$ 
	%(since $g \cdot a - g(y)a = (g-g(y)\cdot 1)a \in J_y$). 
	Therefore,
	\[
	\Phi(g \cdot a) = \psi([g \cdot a]_y) = \psi(g(y)[a]_y) = g(y)\psi([a]_y) = g(y)\Phi(a).
	\]
	Thus, $\Phi$ satisfies the $y$-linearity condition and defines a valid $y$-state on $A$.
\end{proof}

\begin{remark}\label{rem:SK}
	For each $y \in Y,$ the state space $S_y = S_y(A)$ can be equipped with coordinates. Indeed, for $i \in \{1, \dots, n\}$, let $x_i: S_y \to \R$ be the coordinate function $x_i(\psi) = \psi([e_i]_y)$.
	Then $\iota_y: S_y \to \R^n,$
	\[ \iota_y(\psi) = (x_1(\psi), \dots, x_n(\psi)) = (\psi([e_1]_y), \dots, \psi([e_n]_y)), \]
	is continuous and affine, and it is injective since states are uniquely determined by their values on $\{[e_1]_y, \dots, [e_n]_y\}$ (recall that $\psi([e_0]_y) = 1$ by definition). Let $K_y = \iota_y(S_y) \subset \R^n$. Since $S_y$ is a compact convex set and $\iota_y$ is an affine homeomorphism onto its image, $K_y$ is also a compact convex set.\looseness=-1
\end{remark}

\begin{definition}\label{def:state-conv} Let $A$ be a free order unit module with multiplicative domain $\cM_A \cong \cC(Y).$ For $y \in Y,$ let $K_y$ be as in Remark \ref{rem:SK}.
	Denote by $K = K_A$ the union of the fibers $K_y$ in the product space,
	\[ K = \bigcup_{y \in Y} (K_y \times \{y\}) \subseteq \R^n \times Y \subseteq \R^{n+m}. \]
	The set $K$ is called the \textbf{partially convex coordinate state space} of $A.$
	%We say that the partially convex state space $S_{\mathrm{par}}(A)$ is \textbf{regular} if the corresponding set $K$ is regular (as per Definition \ref{def:reg}).
\end{definition}

\begin{remark}\label{re:choice}
	Definition \ref{def:state-conv} uses a choice of module basis $\{e_{0},\dots,e_{n}\}$ for the free order unit module $A$ and a choice of generating tuple $(g_1, \dots, g_m)$ for $\mathcal{M}_A$ to represent the partially convex state space as a coordinate subset $K_{A}\subseteq\mathbb{R}^{n+m}$. While changing the generating tuple alters the Euclidean realization $Y \subseteq \mathbb{R}^m$ of the base (without changing its intrinsic character space $\Omega(\mathcal{M}_A)$) and changing the module basis alters the Euclidean slices $K_y \subset \mathbb{R}^n$, the resulting coordinate sets are isomorphic in the sense of Definition \ref{def:part-iso} via the induced homeomorphisms $\phi: Y \to Y'$ on the base and affine homeomorphisms $\theta_y: K_y \to K'_{\phi(y)}$ on the slices.
	
	Moreover, $S_{par}(A)$ carries a natural intrinsic topology independent of any basis or generator choice, defined via evaluation: a sequence $\phi_{k}\in S_{y_{k}}$ converges to $\phi\in S_{y}$ if $y_{k}\rightarrow y$ in $\Omega(\mathcal{M}_A)$ and $\phi_{k}([a]_{y_{k}})\rightarrow\phi([a]_{y})$ for every $a\in A$. 
	Since $A$ is a finitely generated free $\cC(Y)$-module, this intrinsic topology coincides with the coordinate topology. 
	
	Indeed, 
	assume $\phi_k \to \phi$ in coordinates. This means $y_k \to y$ and $\phi_k([e_i]_{y_k}) \to \phi([e_i]_{y})$ for all $i=0,\dots,n.$
	Now for any $a \in A$ there exist unique coefficient functions $c_i \in \cC(Y)$ (which are continuous by Axiom (7)) such that $a = \sum_{i=0}^n c_i e_i.$ 
	Then for any $z \in Y$ and $\phi \in A_z^*,$ we have $\phi([a]_{z}) = \sum_{i=0}^n c_i(z) \phi([e_i]_{z}).$
Hence,
	\[
	\lim_{k \to \infty} \phi_k([a]_{y_k}) = \lim_{k \to \infty} \sum_{i=0}^n c_i(y_k) \phi_k([e_i]_{y_k}).
	\]
	Since the $c_i$ are continuous, $c_i(y_k) \to c_i(y),$ and by assumption, $\phi_k([e_i]_{y_k}) \to \phi([e_i]_{y}).$
	Therefore, the sum converges to $\sum_{i=0}^n c_i(y) \phi([e_i]_{y}) = \phi([a]_{y})$.
	On the other hand, it is clear that intrinsic convergence implies convergence in coordinates.
	In what follows we use the coordinate definition of convergence (Definition \ref{def:state-conv}) knowing it creates no topological ambiguity.
\end{remark}

The next lemma and proposition show that if $A$ is a free order unit module of type $(n,m),$ then the corresponding set $K_A \subseteq \R^{n+m}$ is a compact regular partially convex set.

\begin{lemma}\label{le:Kclosed}
	Let $A$ be a free order unit module. Then the partially convex coordinate state space $K = K_A$ of $A$ is a compact set.
\end{lemma}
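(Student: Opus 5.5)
Since $K\subseteq\R^{n+m}$, compactness is equivalent to boundedness plus closedness. I will prove each separately, working in the coordinates fixed by the module basis $\{e_0,\dots,e_n\}$ and the generators of $\cM_A$, as justified by Remark~\ref{re:choice}.

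\emph{Boundedness.} The base $Y$ is compact by Axiom (2), so the $y$-coordinates are bounded. For the $x$-coordinates, take $(x,y)\in K$ with $x=\iota_y(\psi)$ for a state $\psi$ on $A_y$. Every state of an AOU space has norm one, so
\[
|x_i|=|\psi([e_i]_y)|\le \|[e_i]_y\|_y\le \|e_i\|,
\]
where the last inequality comes from item (\textit{1}) of Proposition~\ref{prop:ns}. Hence $K\subseteq \prod_i[-\|e_i\|,\|e_i\|]\times Y$ is bounded.

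\emph{Closedness.} Let $(x^{(k)},y_k)\in K$ converge to $(x^*,y^*)$. Then $y^*\in Y$ because $Y$ is closed. Write $x^{(k)}=\iota_{y_k}(\psi_k)$ with $\psi_k\in S(A_{y_k})$. Define a linear functional $\psi$ on $A_{y^*}$ by
\[
\psi([e_0]_{y^*})=1,\qquad \psi([e_i]_{y^*})=x^*_i \quad (i=1,\dots,n).
\]
This is well defined because, by Remark~\ref{re:conv}, the classes $[e_i]_{y^*}$ form a basis of $A_{y^*}$. Since $\psi$ is unital and $\iota_{y^*}(\psi)=x^*$, it remains only to show that $\psi$ is positive on $A_{y^*}$.

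\emph{Positivity, the main obstacle.} Fix $v\in A_{y^*}^+$. By Axiom (5) $v$ lifts to some $a\in A^+$. Write $a=\sum_i c_ie_i$; the coefficients $c_i$ are continuous by Axiom (7). Because $a\in A^+$, Axiom (5) gives $[a]_{y_k}\in A_{y_k}^+$, and therefore
\[
0\le \psi_k([a]_{y_k})=\sum_{i=0}^n c_i(y_k)\,x^{(k)}_i, \qquad x^{(k)}_0:=1.
\]
As $k\to\infty$ the right-hand side tends to $\sum_i c_i(y^*)x^*_i=\psi([a]_{y^*})=\psi(v)$, so $\psi(v)\ge0$. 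Consequently $(x^*,y^*)=(\iota_{y^*}(\psi),y^*)\in K$, and $K$ is closed.

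The delicate point is the positivity step. It works only because positive elements of a fiber lift to globally positive sections via Axiom (5), which also makes LHC automatic as in Remark~\ref{re:lhc1}, and because the coefficient functions are continuous. If lifting were unavailable, the fallback would be to use closedness of the total cone from Lemma~\ref{le:closed} together with Proposition~\ref{prop:conv}; with Axiom (5) in hand, the direct argument above should suffice.
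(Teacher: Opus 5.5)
Your proof is correct and follows the paper's argument: boundedness via $|\psi([e_i]_y)|\le\|[e_i]_y\|_y\le\|e_i\|$, and closedness by defining the limiting unital functional on the basis $[e_i]_{y^*}$ and then checking positivity. Your positivity step is a streamlined version of the paper's. The paper goes through the LHC sequence of Remark~\ref{re:lhc1} (which is exactly $[a]_{y_k}$ for a positive lift $a$) together with the estimate $|\psi_k(p_k)-\psi_k([a]_{y_k})|\le\|p_k-[a]_{y_k}\|_{y_k}$. You instead use the positive lift $a\in A^+$ from Axiom~(5) directly, which gives $\psi_k([a]_{y_k})\ge 0$ outright.
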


\begin{proof}
	Let 
	$$(x^{(k)}, y_k) = (x_1(\psi_k), \dots, x_n(\psi_k),y_k) = (\psi_k([e_1]_{y_k}), \dots, \psi_k([e_n]_{y_k}), y_k) \in K_{y_k} \times \{y_k\}$$ 
	be a sequence converging to 
	$(x_1,\ldots,x_n, y) \in \R^{n+m}.$
	Since $Y$ is compact, $y \in Y.$ It remains to prove that there is a $\psi \in S_y$ such that $(x_1,\ldots,x_n) = (x_1(\psi), \dots, x_n(\psi)) \in K_y.$ 
	
	Define $\psi : A_y \to \R$ via
	$$
	\psi([e_i]_y) = \lim_k \psi_k([e_i]_{y_k})
	$$
	for $i=0,\ldots,n.$ 
	Since $\psi_k([e_0]_{y_k})=1$ for all $k,$ we have $\psi([e_0]_y) =1.$
	
	Now let $p =  \sum_{i=0}^n c_i(y) [e_i]_y \in A_y^+.$ Since $y_k \to y,$ Remark \ref{re:lhc1} implies there is a sequence $(p_k)_k$ with $p_k \in A_{y_k}^+$ such that $p_k \to p$ in the sense of Definition \ref{def:fiber_conv}.
	So there exists an $a \in A$ such that $[a]_y = p$ and
	\[
	\lim_{k \to \infty} \| p_k - [a]_{y_k} \|_{y_k} = 0.
	\]
	By Axiom (3), the element $a$ has a unique representation $a = \sum_{j=0}^n \alpha_j e_j$ with $\alpha_j \in \mathcal{M}_A \cong \cC(Y).$ Note that
	\begin{align*}
		\psi_k([a]_{y_k}) &= \psi_k\left( \sum_{j=0}^n \alpha_j(y_k) [e_j]_{y_k} \right) \\
		&= \sum_{j=0}^n \alpha_j(y_k) \psi_k([e_j]_{y_k}).
	\end{align*}
	Since $\alpha_j$ are continuous and $x^{(k)} \to x$, taking the limit gives
	\[
	\lim_{k \to \infty} \psi_k([a]_{y_k}) = \sum_{j=0}^n \alpha_j(y) x_j = \psi\left( \sum_{j=0}^n \alpha_j(y) [e_j]_y \right) = \psi([a]_y) = \psi(p),
	\]
	where we set $x_0=1.$
	Now each $\psi_k$ has norm $1,$ hence
	\[
	| \psi_k(p_k) - \psi_k([a]_{y_k}) | = | \psi_k(p_k - [a]_{y_k}) | \le \| p_k - [a]_{y_k} \|_{y_k}.
	\]
	The right-hand side converges to 0 and since $p_k \in A_{y_k}^+$, we have $\psi_k(p_k) \ge 0$. Thus
	\[
	\psi(p) = \lim_{k \to \infty} \psi_k([a]_{y_k}) = \lim_{k \to \infty} \psi_k(p_k) \ge 0.
	\]
	Since $p$ was arbitrary, $\psi$ is a positive functional. Hence $\psi \in S_y(A)$, which implies $x \in K_y$ and thus $(x,y) \in K$, proving that $K$ is closed. 
	
	To see that $K$ is bounded, note that for any $(x,y) \in K$ with $x = (\psi([e_1]_y), \dots, \psi([e_n]_y))$ for some $\psi \in S_y(A)$, we have
	\[
	|x_i| = |\psi([e_i]_y)| \le \|\psi\| \cdot \|[e_i]_y\|_y \le \|[e_i]_y\|_y \le \|e_i\| \quad (i = 1, \dots, n)
	\]
	by Item (\textit{1}) of Proposition \ref{prop:ns}. Since $Y \subseteq \mathbb{R}^m$ is compact, it follows that $K \subseteq \prod_{i=1}^n [-\|e_i\|, \|e_i\|] \times Y$ is bounded. Being closed and bounded in $\mathbb{R}^{n+m}$, $K$ is compact.
\end{proof}

\begin{proposition}\label{prop:Sreg}
	If $A$ is a free order unit module, then the coordinate state space $K_A$ is a  compact regular partially convex set.
\end{proposition}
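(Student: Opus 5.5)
The plan is to verify the three conditions of Definition~\ref{def:reg}. Compactness is already given by Lemma~\ref{le:Kclosed}. Convexity of each slice $K_y=\iota_y(S_y)$ is recorded in Remark~\ref{rem:SK}. So the work lies in showing that every $K_y$ has nonempty interior and that $y\mapsto \operatorname{int}(K_y)$ is LHC. By Remark~\ref{rem:lhc}, once the interiors are known to be nonempty, it suffices to prove that $y\mapsto K_y$ itself is LHC.

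\emph{Nonempty interior.} Fix $y$. By Axiom~(5) and Remark~\ref{re:conv}, $A_y$ is an $(n+1)$-dimensional AOU space with basis $[e_0]_y=u_y,[e_1]_y,\dots,[e_n]_y$. In a finite-dimensional AOU space the states separate points: by Kadison's representation, $a\neq 0$ means some state is nonzero on $a$. Hence the states span $A_y^*$. Since every state takes the value $1$ at $u_y$, the affine span of $S(A_y)$ is the $n$-dimensional affine hyperplane $\{\psi:\psi(u_y)=1\}$. The map $\iota_y$ is the restriction to this hyperplane of the linear isomorphism $\psi\mapsto(\psi([e_i]_y))_{i=0}^n$, which is an affine bijection of the hyperplane onto $\{1\}\times\mathbb R^n$. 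Therefore $K_y$ is a convex subset of $\mathbb R^n$ whose affine span is all of $\mathbb R^n$, and so $\operatorname{int}(K_y)\neq\emptyset$.

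\emph{Lower hemicontinuity (main step).} Let $y_k\to y_0$ and $x\in K_{y_0}$, say $x=\iota_{y_0}(\psi)$. Suppose, towards a contradiction, that $x$ cannot be approximated from $K_{y_k}$. Passing to a subsequence, we may assume $\operatorname{dist}(x,K_{y_k})\geq\varepsilon>0$ for all $k$.
\begin{enumerate}
\item \emph{Separation in each fiber.} Let $p_k$ be the nearest point of the compact convex set $K_{y_k}$ to $x$, and put $v_k=(x-p_k)/\|x-p_k\|$. Then $h_k(z)=\langle v_k,p_k-z\rangle$ is nonnegative on $K_{y_k}$ and satisfies $h_k(x)\le-\varepsilon$.
\item \emph{Passing to the fiber $A_{y_k}$.} Set $b_k=\langle v_k,p_k\rangle[e_0]_{y_k}-\sum_i (v_k)_i[e_i]_{y_k}\in A_{y_k}$. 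Every state of $A_{y_k}$ evaluates $b_k$ as $h_k$ at a point of $K_{y_k}$, so every state is nonnegative on $b_k$. In a finite-dimensional AOU space this forces $b_k\in A_{y_k}^+$.
\item \emph{Extracting a limit.} The coefficient vectors of $b_k$ are bounded, because $\|v_k\|=1$ and $\|p_k\|$ is bounded by Lemma~\ref{le:Kclosed}. Passing to a subsequence, they converge to some $\mathbf c^{(0)}=(c_0,-v)$ with $\|v\|=1$. By Proposition~\ref{prop:conv}, $b_k\to b_0:=\sum_i c^{(0)}_i[e_i]_{y_0}$ in the fiber bundle. By Lemma~\ref{le:closed}, $b_0\in A_{y_0}^+$.
\item \emph{Contradiction.} On the one hand, $\psi(b_0)=c_0-\langle v,x\rangle=\lim_k h_k(x)\le-\varepsilon<0$. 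On the other hand, $\psi$ is positive on $A_{y_0}^+$, so $\psi(b_0)\geq 0$.
\end{enumerate}
This contradiction shows that $y\mapsto K_y$ is LHC, which completes the plan.

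The delicate points are all in this step. First, the positivity of $b_k$ relies on the finite-dimensional fact that an element on which all states are nonnegative lies in the cone; this holds because the cone of an AOU space is closed. Second, one must ensure that the limit $b_0$ is genuinely positive, and this is exactly where the closedness of $P$ from Lemma~\ref{le:closed} (and hence Axiom~(6)) enters.
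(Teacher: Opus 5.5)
Your proposal is correct and follows the paper's proof. The steps match: the reduction via Remark~\ref{rem:lhc}, then the LHC contradiction argument (metric projection onto $K_{y_k}$, a separating element of $A_{y_k}^+$ with bounded coefficient vector, Proposition~\ref{prop:conv} and Lemma~\ref{le:closed} yielding a positive limit in $A_{y_0}^+$ that is negative at $x$). Your explicit subsequence extraction and your justification that $b_k\in A_{y_k}^+$ are slightly more careful than the paper's wording, which says ``for all sufficiently large $k$'' and asserts positivity via $\mathrm{Aff}(K_{y_k})\cong A_{y_k}$.

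The only real difference is the nonempty-interior step. You argue that states separate points of $A_y$, hence span $A_y^*$, so $K_y$ has full affine span in $\mathbb{R}^n$. The paper instead shows that the dual cone $\mathcal{P}$ of the pointed cone $A_y^+$ has nonempty interior and then normalizes. Both routes are valid, and yours is shorter.
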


\begin{proof} 
	Since each $K_y$ is convex, $K_A$ is a partially convex set. By Lemma \ref{le:Kclosed}, $K_A$ is compact. 
	%Since $K_A$ is homeomorphic to $S_{\mathrm{par}}(A),$ the latter is also compact. 
	It thus remains to prove regularity. 
	
	\textit{Nonempty interior:}
	We first show that for any finite-dimensional real Archimedean order unit space $\cR$ with order unit $u,$ its state space $S(\cR)$ has nonempty interior in the affine hyperplane $\{\varphi \in \cR^* \ | \ \varphi(u) = 1\}$. 
	
	By Lemma \ref{le:unit}, the order unit $u$ of $\cR$ is an interior point of the cone $\cR^+.$  
	Next, since $\cR$ is finite-dimensional and the cone $\cR^+$ is pointed (i.e., $\cR^+ \cap (-\cR^+) = \{0\}$), the dual cone $\cP = \{ \varphi \in \cR^* \ | \  \varphi(\cR^+) \geq 0\}$ has nonempty interior in $\cR^*.$ 
%	Indeed, consider $S = \{ x \in \mathcal{R}^+ \mid \|x\| = 1 \}.$
%	Since $\mathcal{R}$ is finite-dimensional, the unit sphere in $\cR$ is compact. Because $\mathcal{R}^+$ is closed, $S$ is a closed subset of a compact set, and thus $S$ is compact. 
%	
%	Since $\mathcal{R}^+$ is pointed, $0 \notin S$. By the Hahn-Banach separation theorem, there exists a functional $\phi \in \mathcal{R}^*$ and a constant $\epsilon > 0$ such that
%	\[ \phi(x) > \epsilon \quad \text{for all } x \in S. \]
%	By linearity we have $\phi(x) = \|x\| \phi(x/\|x\|) > 0$ for any $x \in \mathcal{R}^+ \setminus \{0\}.$ Hence, $\phi$ is an interior point of $\cP.$\looseness=-1
	Indeed, consider the compact set $S=\{x\in\mathcal{R}^+ \ | \ \Vert{}x\Vert{}=1\}$. Because $\mathcal{R}^+$ is pointed, $0 \notin \operatorname{conv}(S)$. (If $0 = \sum_{j=1}^k \lambda_j s_j$ with $\lambda_j > 0$ and $s_j \in S$, then $-s_1 = \sum_{j=2}^k \frac{\lambda_j}{\lambda_1} s_j \in \mathcal{R}^+$, forcing $s_1 \in \mathcal{R}^+ \cap (-\mathcal{R}^+) = \{0\}$, which contradicts $\Vert{}s_1\Vert{}=1$.) Since $\operatorname{conv}(S)$ is a compact convex set in finite dimensions not containing $0$, the strict Hahn–Banach separation theorem guarantees the existence of a functional $\phi\in\mathcal{R}^*$ and a constant $\varepsilon>0$ such that
	$$\phi(x) \ge \varepsilon > 0 \quad \text{for all } x\in \operatorname{conv}(S),$$
	and in particular $\phi(x) \ge \varepsilon > 0$ for all $x \in S$. By positive homogeneity, $\phi(x) = \Vert{}x\Vert{}\phi(x/\Vert{}x\Vert{}) > 0$ for any $x\in\mathcal{R}^+\setminus\{0\}$. Hence, $\phi$ is an interior point of $\mathcal{P}$. Indeed, any $\psi \in \mathcal{R}^*$ with $\Vert{}\psi - \phi\Vert{} < \epsilon$ satisfies $\psi(x) \ge \phi(x) - \Vert{}\psi - \phi\Vert{} > 0$ for all $x \in S$, which by positive homogeneity ensures $\psi(\mathcal{R}^+) \ge 0$ and thus $B(\phi, \epsilon) \subseteq \mathcal{P}$, proving $\phi \in \operatorname{int}(\mathcal{P}).$

	Note that any nonzero $\varphi \in \cP$ satisfies $\varphi(u) >0.$ Indeed, if $\varphi$ is nonzero, there is an $x \in \cR$ with $\|x\| \leq 1$ such that $\varphi(x) \neq 0.$
	Since $u$ is an interior point of $\cR^+,$ there is an $\epsilon >0$ such that $u \pm \epsilon x \in \cR^+.$ Hence $\varphi(u \pm \epsilon x) \geq 0$ and so $\varphi(u) \geq \epsilon|\varphi(x)| >0.$
	
	It follows that each nonzero $\varphi \in \cP$ can be normalized to a state $\tilde{\varphi} = \varphi(u)^{-1}\varphi \in S(\cR).$ Since $\cP$ has nonempty interior in $\cR^*,$ it is now clear that $S(\cR)$ has nonempty interior in the affine hyperplane $\{ \varphi \in \cR^* \mid \varphi(u) = 1 \}.$ Indeed, for any $\phi \in \operatorname{int}(\mathcal{P})$, the normalized state $\psi = \phi(u)^{-1}\phi$ also lies in $\operatorname{int}(\mathcal{P})$, so an open ball $B(\psi, \delta) \subseteq \mathcal{P}$ yields a relatively open neighborhood $B(\psi, \delta) \cap H \subseteq S(\mathcal{R})$ in the hyperplane $H.$
	
	It follows that the interior of $S_y$ in $\{\varphi \in A_y^* \ | \ \varphi([e_0]_y) = 1\}$ is nonempty for all $y \in Y.$ Hence, the interior of $K_y$ is nonempty for all $y \in Y.$

	\textit{LHC property:}
	It remains to show that the set-valued map $y\mapsto \mathrm{int}(K_{y})$ is LHC. That is, we show that for any sequence $y_{k}\rightarrow y_{0}$ in $Y$ and any $x \in \mathrm{int}(K_{y_{0}})$, there exists a sequence $x_{k}\in \mathrm{int}(K_{y_{k}})$ such that $x_{k}\rightarrow x$.  By Remark \ref{rem:lhc}, it suffices to prove that $y\mapsto K_{y}$ is LHC. 
	
	We proceed by contradiction. Suppose the map is not LHC at $y_{0}$. Then there exists a point $x\in K_{y_{0}}$, a sequence $y_{k}\rightarrow y_{0}$ in $Y$, and $\epsilon>0$ such that for all sufficiently large $k$,
	$$\mathrm{dist}(x,K_{y_{k}})\ge\epsilon.$$ For each $k$, $K_{y_k} \subset \mathbb{R}^n$ is a non-empty compact convex set. Let $p_k = \mathrm{proj}_{K_{y_k}}(x)$ denote the metric projection of $x$ onto $K_{y_k}$. Then $\Vert{}p_k - x\Vert{}_2 \ge \epsilon$. Define the unit vector$$v_k = \frac{p_k - x}{\Vert{}p_k - x\Vert{}_2} \in \mathbb{R}^n \quad (\Vert{}v_k\Vert{}_2 = 1).$$By the standard projection property for closed convex sets, $\langle v_k, z - p_k \rangle \ge 0$ for all $z \in K_{y_k}$. Since $p_k = x + \Vert{}p_k - x\Vert{}_2 v_k$, we obtain$$\langle v_k, z - x \rangle \ge \Vert{}p_k - x\Vert{}_2 \ge \epsilon \quad \text{for all } z \in K_{y_k}.$$ 
	
	Now define the affine function $f_k: \mathbb{R}^n \to \mathbb{R}$ by
	$$f_k(z) = \langle v_k, z - x \rangle - \epsilon = \sum_{i=1}^n (v_k)_i z_i - \langle v_k, x \rangle - \epsilon.$$
	By construction we have $f_k(z) \ge 0$ for all $z \in K_{y_k}$ and $f_k(x) = -\epsilon < 0$.
	Under the identification $\mathrm{Aff}(K_{y_k}) \cong A_{y_k}$, $f_k$ defines an element $a_k \in A_{y_k}^+$ with coefficient vector $c^{(k)} \in \mathbb{R}^{n+1}$ given by$$c_0^{(k)} = -\langle v_k, x \rangle - \epsilon, \quad c_i^{(k)} = (v_k)_i \quad (i = 1, \dots, n).$$

	By Lemma \ref{le:Kclosed}, the coordinate state space $K = K_A$ is compact, so there exists $R > 0$ such that $\Vert{}z\Vert{}_2 \le R$ for all $z \in \bigcup_{y \in Y} K_y$. Since $\Vert{}v_k\Vert{}_2 = 1$ and $x \in K_{y_0}$, the Cauchy–Schwarz inequality yields
	$$\vert{}c_0^{(k)}\vert{} \le \Vert{}v_k\Vert{}_2 \Vert{}x\Vert{}_2 + \epsilon \le R + \epsilon.$$
	Consequently, the sequence of coefficient vectors is uniformly bounded, $$\Vert{}c^{(k)}\Vert{}_2^2 = (c_0^{(k)})^2 + \sum_{i=1}^n (v_k)_i^2 \le (R + \epsilon)^2 + 1 < \infty.$$
	Hence, there exists a subsequence $(c^{(k_j)})_j$ converging to some $c^{(0)} \in \mathbb{R}^{n+1}$. By Proposition \ref{prop:conv}, the corresponding elements $a_{k_j} \in A_{y_{k_j}}^+$ converge in the fiber bundle to $a_0 = \sum_{i=0}^n c_i^{(0)} [e_i]_{y_0} \in A_{y_0}$. Since the total positive cone $\mathcal{P} = \bigcup_{y \in Y} (A_y^+ \times \{y\})$ is closed (Lemma \ref{le:closed}), we have $a_0 \in A_{y_0}^+$.  The element $a_0$ defines a non-negative affine function on $K_{y_0}$. In particular, since $x \in K_{y_0}$, we must have $a_0(x) \ge 0$. On the other hand, evaluating at $x$ yields
	$$a_0(x) = c_0^{(0)} + \sum_{i=1}^n c_i^{(0)} x_i = \lim_{j \to \infty} \left( c_0^{(k_j)} + \sum_{i=1}^n c_i^{(k_j)} x_i \right) = \lim_{j \to \infty} f_{k_j}(x) = -\epsilon < 0,$$
	which is a contradiction. Thus, $y \mapsto K_y$ is LHC, and consequently $y \mapsto \mathrm{int}(K_y)$ is LHC.
\end{proof}

		\section{Duality Theorem} \label{sec:rep}
Having characterized the topological structure of compact regular partially convex sets and the axiomatic properties of free order unit modules, we now formalize their duality. This section is dedicated to proving the analog of Kadison duality for partial convexity, Theorem \ref{thm:dual}. The proof proceeds in two steps: first, we establish a representation theorem for free order unit modules, followed by a representation theorem for compact regular partially convex sets.

	\begin{theorem}\label{th:fnsys}
		%Let $A$ be a free order unit module of type $(n,m).$ Then there exists a compact regular partially convex set $K \subset \R^{n+m}$ such that $A$ is order-isomorphic to $\Caff$.
		Let $A$ be a free order unit module of type $(n, m)$ and let $K_A \subseteq \mathbb{R}^{n+m}$ be its partially convex coordinate state space. Then $K_A$ is a compact regular partially convex set, and $A$ is isomorphic to $C_{\mathrm{aff}}(K_A)$ as a free order unit module.
	\end{theorem}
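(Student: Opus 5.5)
The first assertion, that $K_A$ is a compact regular partially convex set, is exactly Proposition \ref{prop:Sreg}, so the work lies in constructing the isomorphism. I will use the evaluation (Kadison-type) map $\varphi: A \to \mathcal{C}_{\mathrm{aff}}(K_A)$,
\[
\varphi(a)(x,y) = c_0(y) + \sum_{i=1}^n c_i(y)x_i, \qquad a = \sum_{i=0}^n c_i e_i .
\]
Equivalently, $\varphi(a)(\iota_y(\psi),y) = \psi([a]_y)$ for $\psi \in S_y(A)$; here I use Lemma \ref{le:Sy} and Remark \ref{re:conv}, which give $\psi([a]_y)=\sum_i c_i(y)\psi([e_i]_y)$ and $\psi([e_0]_y)=1$. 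By Axiom (7) the $c_i$ are continuous, so $\varphi(a)$ is continuous and partially affine on $K_A$.

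\emph{Bijectivity.} For injectivity, suppose $\varphi(a)=0$. Since $\operatorname{int}(K_y)\neq\emptyset$ by Proposition \ref{prop:Sreg}, the functions $1,x_1,\dots,x_n$ are linearly independent on $K_y$, so all $c_i(y)=0$ and hence $a=0$. For surjectivity, Theorem \ref{thm:ctsXY} applied to the regular set $K_A$ writes every $f\in\mathcal{C}_{\mathrm{aff}}(K_A)$ as $d_0(y)+\sum_i d_i(y)x_i$ with $d_i\in\mathcal{C}(Y)$. Then $a=\sum_i d_ie_i$, using $\mathcal{M}_A\cong\mathcal{C}(Y)$, is a preimage.

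\emph{Unitality and module structure.} Unitality is immediate, since $e_0=u$ maps to $1$. The restriction $\rho=\varphi|_{\mathcal{M}_A}$ sends $g=g\cdot e_0$ to the function $g(y)$, which is the natural copy of $\mathcal{C}(Y)$ in $\mathcal{C}_{\mathrm{aff}}(K_A)$. By the computation in the proof of Proposition \ref{prop:axiomok}, this copy is the multiplicative domain of $\mathcal{C}_{\mathrm{aff}}(K_A)$. The map $\rho$ is an isometric $*$-isomorphism because $\|\rho(g)\|_\infty=\sup_Y|g|$; here every $y\in Y$ is attained, since each $S_y(A)$ is nonempty by Lemma \ref{le:Sy}. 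Module compatibility, $\varphi(g\cdot a)=\rho(g)\varphi(a)$, follows directly from the coefficient formula, since the coefficients of $g\cdot a$ are $gc_i$.

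\emph{Order isomorphism.} This is the main step. If $a\in A^+$, then $[a]_y\in A_y^+$, so $\psi([a]_y)\ge 0$ for every state $\psi$ of $A_y$; hence $\varphi(a)\ge0$. Conversely, suppose $\varphi(a)\ge 0$ on $K_A$. Then for each $y$ every state of the finite-dimensional AOU space $A_y$ is nonnegative on $[a]_y$. In a finite-dimensional AOU space the cone is closed and states separate it (Kadison duality for $A_y$, or Hahn--Banach with normalization as in the proof of Proposition \ref{prop:Sreg}), so $[a]_y\in A_y^+$. Axiom (5) then gives $a\in A^+$. Both directions of positivity are therefore preserved, and $\varphi$ is a unital order isomorphism.

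The expected obstacle is the multiplicative-domain identification, specifically checking that $\varphi$ carries $\mathcal{M}_A$ onto the multiplicative domain of $\mathcal{C}_{\mathrm{aff}}(K_A)$, and not merely into it. This is settled by the interior argument from Proposition \ref{prop:axiomok}, together with the fact that $\varphi$ is bijective.
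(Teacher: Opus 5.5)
Your proposal is correct and follows the paper's proof essentially step by step: the same coefficient map $a=\sum_i c_ie_i\mapsto c_0(y)+\sum_{i\ge1} c_i(y)x_i$, surjectivity from Theorem~\ref{thm:ctsXY} applied to the regular set $K_A$ (Proposition~\ref{prop:Sreg}), the order isomorphism via Axiom~(5) plus the fact that states of the finite-dimensional AOU space $A_y$ detect positivity, and the multiplicative-domain identification taken from Proposition~\ref{prop:axiomok}. Where you differ, it is only in being more explicit: your injectivity argument (linear independence of $1,x_1,\dots,x_n$ on slices with nonempty interior) and your check that $\rho$ is isometric (every slice of $K_A$ is nonempty) spell out what the paper dispatches in a single phrase.
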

	
	\begin{proof}

%By Axiom (2), the multiplicative domain $\cM_A$ is a finitely generated commutative $C^*$-algebra on $m$ generators. By the Gelfand-Naimark theorem, there exists a compact Hausdorff space $Y \subseteq \R^m$ (the maximal ideal space of $\cM_A$) such that $\cM_A \cong \cC(Y).$

By Axiom~(2), the multiplicative domain $\mathcal{M}_A$ is a unital commutative real $C^*$-algebra generated by $m$ elements $g_1, \dots, g_m$. By the Gelfand--Naimark theorem, the character space $\Omega(\mathcal{M}_A)$ is compact Hausdorff, and the evaluation map $\chi \mapsto (\chi(g_1), \dots, \chi(g_m))$ embeds $\Omega(\mathcal{M}_A)$ homeomorphically onto a compact subset $Y \subseteq \mathbb{R}^m$, yielding an isometric isomorphism $\mathcal{M}_A \cong C(Y)$.
		
	%{Step 1: Reconstruct the Convex Fibers.}
		For each $y \in Y$, Axiom (5) provides a finite-dimensional AOU space $A_y.$ By the Kadison representation theorem, $A_y$ is order-isomorphic to the space $\Aff(S_y)$ of all continuous affine functions on its state space $S_y$, which is the compact convex set
		\[ S_y = \{ \psi \in A_y^* \mid \psi \text{ is linear, } \psi(A_y^+) \ge 0, \text{ and } \psi([e_0]_y) = 1 \}. \]
		The isomorphism $\Psi_y: A_y \to \Aff(S_y)$ is the evaluation map $\Psi_y(v)(\psi) = \psi(v)$.
		As in Remark \ref{rem:SK}, each $S_y$ is equipped with coordinates to obtain a set $K_y \subseteq \R^n$ and we set $K = \bigcup_y (K_y \times \{y\}).$ 
		By Proposition \ref{prop:Sreg}, the set $K$ is a compact regular partially convex set.

		%{Step 4: Construct the Isomorphism.}
		Define $\Phi: A \to \Caff$ as follows: for an element $a = \sum_{i=0}^n c_i e_i \in A$, the function $\Phi(a)=f_a$ on $K$ acts as 
		\[ f_a(x, y) = c_0(y) + \sum_{i=1}^n c_i(y) x_i, \]
		where $(x, y) \in K$ with $x = (x_1, \dots, x_n) \in K_y.$
		This map is well-defined and linear. It is surjective by Theorem \ref{thm:ctsXY}, and it is injective because the representation of elements of $A$ in the basis is unique.
		
		%{Step 5: Prove it is an Order-Isomorphism.}
		It remains to prove that $\Phi$ is an order-isomorphism. For any $a = \sum c_i e_i \in A$, we have the following chain of equivalences:
		\begin{align*}
			a \in A^+ & \iff [a]_y \in A_y^+ \text{ for all } y \in Y && \text{(by Axiom (5))} \\
			& \iff \text{the function } \Psi_y([a]_y) \text{ is non-negative on } S_y \text{ for all } y \in Y \\
			& \iff \psi([a]_y) \ge 0 \text{ for all } y \in Y \text{ and all } \psi \in S_y \\
			& \iff \psi\left(\sum_{i=0}^n c_i(y) [e_i]_y\right) \ge 0 \text{ for all } y \in Y, \psi \in S_y \\
			& \iff \sum_{i=0}^n c_i(y) \psi([e_i]_y) \ge 0 \text{ for all } y \in Y, \psi \in S_y \\
			& \iff c_0(y) + \sum_{i=1}^n c_i(y) x_i(\psi) \ge 0 \text{ for all } y \in Y, \psi \in S_y \\
			& \iff \Phi(a)(x, y) \ge 0 \text{ for all } (x, y) \in K \quad  && (\text{since } K_y = \iota_y(S_y)) \\
			& \iff \Phi(a) \ge 0 \text{ in } \Caff.
		\end{align*}
		Since $\Phi(u) = \Phi(e_0)$ is the constant function $1$, $\Phi$ is a unital order isomorphism. 
		
		Next, under the identification $\mathcal{M}_A \cong \mathcal{C}(Y),$ the restriction $\rho = \Phi|_{\mathcal{M}_A}$ acts as $\rho(z)(x,y) = z(y)$. By Proposition~\ref{prop:axiomok}, $\mathcal{M}_{\mathcal{C}_{\mathrm{aff}}(K_A)} \cong \mathcal{C}(Y)$, so $\rho: \mathcal{M}_A \to \mathcal{M}_{\mathcal{C}_{\mathrm{aff}}(K_A)}$ is an isometric $*$-isomorphism of $C^*$-algebras.
		
		Finally, for any $z \in \mathcal{M}_A$ and $a = \sum_{i=0}^n c_i e_i \in A$, we have $z \cdot a = \sum_{i=0}^n (z c_i) e_i$. Evaluating at $(x,y) \in K_A$ yields
		\[
		\Phi(z \cdot a)(x,y) = \sum_{i=0}^n (z c_i)(y) x_i = z(y) \sum_{i=0}^n c_i(y) x_i = (\rho(z) \cdot \Phi(a))(x,y)
		\]
		(with $x_0 = 1$). Thus $\Phi(z \cdot a) = \rho(z) \cdot \Phi(a)$, proving that $\Phi$ is an isomorphism of free order unit modules.
	\end{proof}

\begin{theorem} \label{thm:sep}
	Let $K\subseteq\mathbb{R}^{n+m}$ be a compact regular partially convex set in $x\in\mathbb{R}^{n}$, and let $A = \mathcal{C}_{\textnormal{aff}}(K)$ be its associated free order unit module of type $(n,m)$. Then $K$ is isomorphic to the partially convex coordinate state space $K_A \subseteq \mathbb{R}^{n+m}$ in the sense of Definition \ref{def:part-iso}. Moreover, the canonical evaluation map $\theta: K \to S_{par}(A)$ is a fiberwise affine homeomorphism with respect to the intrinsic topology on $S_{par}(A)$.  
\end{theorem}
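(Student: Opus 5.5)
The plan is to write down the canonical evaluation map and check, fiber by fiber, that it is an affine bijection onto the state space, and then handle the topology globally. By Proposition~\ref{prop:axiomok}, $A=\Caff$ is a free order unit module with $\cM_A\cong\cC(Y)$, $Y=\pi_2(K)$. We take the module basis $e_0=1$, $e_i=x_i$ and the generators $g_j=y_j$ of $\cM_A$. With these choices the Gelfand embedding of $\Omega(\cM_A)$ into $\R^m$ is the identity on $Y$, so the base map $\phi$ is $\mathrm{id}_Y$. For $(x,y)\in K$ define $\theta(x,y)=\mathrm{ev}_{(x,y)}$, $f\mapsto f(x,y)$. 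This is clearly unital and positive. It is also $y$-linear, since $(g\cdot f)(x,y)=g(y)f(x,y)$. Hence $\mathrm{ev}_{(x,y)}\in S_y(A)$.

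Its coordinates are $\mathrm{ev}_{(x,y)}([e_i]_y)=x_i$. Therefore, composing $\theta$ with the coordinate map $\iota_y$ of Remark~\ref{rem:SK} gives the identity $x\mapsto x$. In particular $\theta_y:K_y\to S_y(A)$ is injective and affine.

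The key step is surjectivity on each fiber, i.e. $(K_A)_y=K_y$. We already have $K_y\subseteq (K_A)_y$. Conversely, let $\psi\in S_y(A)$ and set $x=\iota_y(\psi)\in\R^n$. Suppose, towards a contradiction, that $x\notin K_y$. We split into cases according to whether $y\in Y$; here $y\in Y$ always holds by construction. Since $K_y$ is compact and convex, Hahn--Banach gives an affine $h(z)=c_0+\sum c_iz_i$ with $h\ge0$ on $K_y$ and $h(x)<0$.

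Next we lift $h$. Exactly as in the ``Fiber-wise order'' part of the proof of Proposition~\ref{prop:axiomok}, Berge's theorem and the correction term $\delta\in I_y$ produce $G\in A^+$ with $G(\cdot,y)=h$. By Remark~\ref{re:conv}, $[G]_y=\sum_i c_i[e_i]_y$. Positivity and $y$-linearity of $\psi$ then give
\[
0\le\psi(G)=\psi([G]_y)=c_0+\sum_i c_ix_i=h(x)<0,
\]
which is a contradiction. Thus $\theta_y$ is an affine bijection of compact convex sets, hence an affine homeomorphism. The resulting map $\Theta(x,y)=(\iota_y\theta_y(x),y)$ is literally the identity of $K$ onto $K_A$, so it is a homeomorphism and an isomorphism in the sense of Definition~\ref{def:part-iso}. (An alternative route to the converse inclusion is Theorem~\ref{prop:sep}: separate $(x,y)$ from $K$ by some $p\in\Paff$. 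This works as well, since $p(x',y)\ge0$ on $K_y$ and $\psi$ sees only the $y$-fiber of $p$, so $\psi(p)=p(x,y)<0$. It does not even need the lifting step.)

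For the last claim, by Remark~\ref{re:choice} the intrinsic topology on $S_{par}(A)$ coincides with the coordinate topology. In coordinates $\theta$ is the identity $K\to K_A$, so it is a homeomorphism. Concretely, $(x_k,y_k)\to(x,y)$ if and only if $y_k\to y$ and $\mathrm{ev}_{(x_k,y_k)}(e_i)=x_{k,i}\to x_i$. By Remark~\ref{re:choice} this is the same as $f(x_k,y_k)\to f(x,y)$ for every $f\in A$, which also follows directly from the continuity of $f$. The main obstacle I expect is this surjectivity step: making sure the lift of the separating affine function is a globally positive element whose class in $A_y$ has the prescribed coefficients. The $\Paff$ separation route sidesteps the lifting entirely.
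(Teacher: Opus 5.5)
Your proposal is correct, but the key surjectivity step follows a different route from the paper. The paper never shows by hand that every $y$-state is an evaluation. It takes the order isomorphism $A_y\cong\Aff(K_y)$ from the ``Fiber-wise order'' part of Proposition~\ref{prop:axiomok}, applies Kadison's theorem to get $S(A_y)\cong K_y$ via evaluation, and transfers this to $S_y(A)$ by Lemma~\ref{le:Sy}. The global homeomorphism then comes from continuity of each $f\in A$ plus the compact-to-Hausdorff argument.

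Your first route is essentially that chain unpacked. The fiberwise Hahn--Banach separation is the content of Kadison's surjectivity, and the Berge-corrected lift $G\in A^+$ is the nontrivial half of $A_y\cong\Aff(K_y)$ as ordered spaces.

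Your second route, via Theorem~\ref{prop:sep}, is genuinely different and more economical. A $y$-state sends $p=\sum_i c_i e_i\in\Paff$ to $\sum_i c_i(y)\psi(e_i)=p(x,y)$. So a partially affine polynomial that is nonnegative on $K$ and negative at $(x,y)$ immediately rules out $\psi$. This step needs only compactness and partial convexity of $K$: no fiber identification, no Berge theorem, no hemicontinuity. The paper's route buys brevity and modularity, since the fiberwise affine homeomorphism comes for free from classical duality.

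One thing you omit is coordinate choices other than the standard ones. The paper checks explicitly that, for an arbitrary module basis $\{1,e_1,\dots,e_n\}$ and generating tuple $(g_1,\dots,g_m)$, the map $(x,y)\mapsto\bigl((e_1(x,y),\dots,e_n(x,y)),(g_1(y),\dots,g_m(y))\bigr)$ is an isomorphism onto the corresponding $K_A$. You only treat the standard coordinates, where $K_A=K$ literally. You should at least invoke Remark~\ref{re:choice}, or give the analogous short argument, to cover other choices.
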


\begin{proof}
%	Denote $A = \Caff$ and let $S_{\text{par}}(A)$ be the corresponding partially convex state space. Then for $y \in Y,$
%	$$
%	A_y = \{f(\cdot,y) \ | \ f \in \Caff  \} 
%	$$
%	is the space $\Aff(K_y)$ of continuous affine functions on $K_y.$ By Kadison's theorem, the state space $S(A_y)$ of $A_y$ is affinely homeomorphic to $K_y$ via the evaluation map $\text{ev}_x : K_y \to S(A_y)$ sending 
%	$$x\  \mapsto\  \left(\,f(\cdot, y) \mapsto f(x,y)\,\right).$$
%	
%	Since $S(A_y) \cong S_y(A),$ we have that $K_y$ and $S_y(A)$ are affinely homeomorphic for all $y \in Y.$ Hence, the map $\theta: K \to S_{\text{par}}(A),$
%	$$
%	(x,y) \mapsto (\text{ev}_x, y)
%	$$
%	defines an isomorphism of the sets $K$ and  $S_{\text{par}}(A).$
Let $A = \mathcal{C}_{\text{aff}}(K)$ with multiplicative domain $\mathcal{M}_A = \mathcal{C}(Y)$, where $Y = \pi_2(K)$. For each $y \in Y$, the fiber space $$
	A_y = \{f(\cdot,y) \ | \ f \in \Caff  \} 
	$$
	is the space $\Aff(K_y)$ of continuous affine functions on $K_y.$ 
	%By Kadison's theorem, the state space $S(A_y) \cong S_y(A)$ is affinely homeomorphic to the slice $K_y$ via the evaluation map  $$\mathrm{ev}_x: K_y \to S_y(A), \quad \mathrm{ev}_x(f) = f(x,y) \quad \text{for } f \in A_y \cong \mathrm{Aff}(K_y).$$
	By Kadison's theorem, the state space $S(A_y)$ of $A_y$ is affinely homeomorphic to $K_y$ via the evaluation map $\text{ev}_x : K_y \to S(A_y)$ sending 
		$$x\  \mapsto\  \left(\,f(\cdot, y) \mapsto f(x,y)\,\right) \quad \text{ for } f(\cdot, y) \in A_y.$$
		Since $S(A_y) \cong S_y(A)$ by Lemma \ref{le:Sy}, the sets $K_y$ and $S_y(A)$ are affinely homeomorphic for all $y \in Y.$
Define the global evaluation map $\theta: K \to S_{par}(A)$ by 
$$\theta(x,y) = (\mathrm{ev}_x, y).$$  

For each fixed $y \in Y$, the restriction $\theta(\cdot,y): K_y \to S_y(A)$ is an affine homeomorphism. To verify that $\theta$ is a global homeomorphism onto $S_{par}(A)$ equipped with the intrinsic topology of Remark \ref{re:choice}, let $(x_k, y_k) \to (x, y)$ in $K$. 
For any $f \in A = \mathcal{C}_{\text{aff}}(K)$, continuity of $f$ implies
$$\mathrm{ev}_{x_k}([f]_{y_k}) = f(x_k, y_k) \longrightarrow f(x, y) = \mathrm{ev}_x([f]_y),$$
which proves that $\theta(x_k, y_k) \to \theta(x, y)$ in $S_{par}(A)$. 
Since $K$ and $S_{par}(A)$ are compact Hausdorff spaces and $\theta$ is a continuous bijection, $\theta$ is a homeomorphism. 

Finally, when $A$ is equipped with the standard basis $\{e_0=1, e_1=x_1, \dots, e_n=x_n\}$ and $\mathcal{M}_A \cong \mathcal{C}(Y)$ is equipped with the standard coordinate generating tuple $(y_1, \dots, y_m)$, the coordinate map $\iota_y: S_y(A) \to \mathbb{R}^n$ of Remark \ref{rem:SK} satisfies 
$$\iota_y(ev_x) = (ev_x([e_1]_y), \dots, ev_x([e_n]_y)) = (x_1, \dots, x_n) = x,$$
and the base evaluation map $y \mapsto (y_1(y), \dots, y_m(y)) = y$ is $\mathrm{id}_Y$. Thus, the coordinate state space $K_A = \bigcup_{y \in Y}(\iota_y(S_y(A)) \times \{y\})$ is identical to $K$. 

 For any other module basis $\{e_0=1, e_1, \dots, e_n\}$ of $A$ and any other generating tuple $(g_1, \dots, g_m)$ of $\mathcal{M}_A$, the induced map $\Theta: K \to K_A$ is an isomorphism in the sense of Definition \ref{def:part-iso}. 
 Indeed, the generating tuple defines a homeomorphism $\phi: Y \to Y' \subseteq \mathbb{R}^m$ via $\phi(y) = (g_1(y), \dots, g_m(y))$, and for each $y \in Y$, the coordinate map $\iota_y: S_y(A) \to \mathbb{R}^n$ yields:
$$\theta_y(x) := \iota_y(ev_x) = (e_1(x,y), \dots, e_n(x,y)) \in (K_A)_{\phi(y)}$$
Define the global map $\Theta: K \to K_A$ by
$$\Theta(x,y) = (\theta_y(x), \phi(y)).$$
For each fixed $y \in Y$, the map $\theta_y: K_y \to (K_A)_{\phi(y)}$ is affine because each basis function $x \mapsto e_i(x,y)$ is affine on $K_y$. Furthermore, because $\{[1]_y, [e_1]_y, \dots, [e_n]_y\}$ is a basis of $A_y \cong \mathrm{Aff}(K_y)$ and $\mathrm{int}(K_y) \neq \emptyset$, the affine map $\theta_y$ has full rank (its linear part is invertible), making it an affine homeomorphism between the compact convex slices $K_y$ and $(K_A)_{\phi(y)}$. Since each $e_i$ and $g_j$ is continuous, $\Theta$ is a continuous bijection between the compact space $K$ and the Hausdorff space $K_A$, hence a global homeomorphism. Therefore, $\Theta$ is an isomorphism of partially convex sets. 
\end{proof}

\end{document}